\documentclass[fleqn,11pt,twoside]{article}
\usepackage{
 color, xcolor,epsfig, graphics, subfigure}
\usepackage{amsfonts,amsthm,amsmath,amssymb,
	amsbsy,euscript}


\newcommand{\by}[1]{\textrm{{#1}}}
\newcommand{\jour}[1]{\textit{{#1}}}
\newcommand{\vol}[1]{V.\textrm{{#1}}}
\newcommand{\book}[1]{\textrm{{#1}}}

\makeatletter
\newcommand{\copyrightnote}[2]{{\renewcommand{\thefootnote}{}
 \footnotetext{\small\it
\begin{flushleft}
 \copyright \ #1   #2  
\end{flushleft}}}}

\newcommand{\Name}[1]{\begin{flushleft}
                       \LARGE \bf #1
                       \end{flushleft}\vspace{-3mm}}

\newcommand{\Author}[1]{\begin{flushleft}
                       \it #1 \end{flushleft}}

\newcommand{\Address}[1]{\begin{flushleft}
                       \it #1 \end{flushleft}}

\newcommand{\Date}[1]{\begin{flushleft}
                      \small  \it #1 \end{flushleft}}

%
\newcommand{\evenhead}{Author \ name}
\newcommand{\oddhead}{Article \ name}

%
\renewcommand{\@evenhead}{
\hspace*{-3pt}\raisebox{-15pt}[\headheight][0pt]{\vbox{\hbox to \textwidth
{\thepage \hfil \evenhead}\vskip4pt \hrule}}}
\renewcommand{\@oddhead}{
\hspace*{-3pt}\raisebox{-15pt}[\headheight][0pt]{\vbox{\hbox to \textwidth
{\oddhead \hfil \thepage}\vskip4pt\hrule}}}
\renewcommand{\@evenfoot}{}
\renewcommand{\@oddfoot}{}

%
\setlength{\textwidth}{150.0mm}
\setlength{\textheight}{220.0mm}
\setlength{\oddsidemargin}{0in}
\setlength{\evensidemargin}{0in}
\setlength{\topmargin}{-1cm}
\setlength{\parindent}{5.0mm}

%
\long\def\@makecaption#1#2{%
  \vskip\abovecaptionskip
  \sbox\@tempboxa{\small \textbf{#1.}\ \ #2}%
  \ifdim \wd\@tempboxa >\hsize
    {\small \textbf{#1.}\ \ #2}\par
  \else
    \global \@minipagefalse
    \hb@xt@\hsize{\hfil\box\@tempboxa\hfil}%
  \fi
  \vskip\belowcaptionskip}

%
\newcommand{\JNMPnumberwithin}[3][\arabic]{%
  \@ifundefined{c@#2}{\@nocounterr{#2}}{%
    \@ifundefined{c@#3}{\@nocnterr{#3}}{%
      \@addtoreset{#2}{#3}%
      \@xp\xdef\csname the#2\endcsname{%
        \@xp\@nx\csname the#3\endcsname .\@nx#1{#2}}}}%
}

%
\renewenvironment{proof}[1][\proofname]{\par
  \normalfont
  \topsep6\p@\@plus6\p@ \trivlist
  \item[\hskip\labelsep\textbf{%
    #1\@addpunct{.}}]\ignorespaces
}{%
  \qed\endtrivlist
}

%
\newcommand{\resetfootnoterule} {
  \renewcommand\footnoterule{%
  \kern-3\p@
  \hrule\@width.4\columnwidth
  \kern2.6\p@}
}

%

\renewcommand{\footnoterule}{}

\makeatother

\setcounter{page}{186}

\newtheorem{theor}{Theorem}

\theoremstyle{definition}

\newtheorem{proposition}[theor]{Proposition}
\newtheorem{lemma}[theor]{Lemma}
\newtheorem{cor}[theor]{Corollary}
\newtheorem{define}{Definition}

\newtheorem{problem}{Problem}
\newtheorem{open}[problem]{Open problem}
\newtheorem{example}{Example}
\newtheorem{counterexample}[example]{Counterexample}
\theoremstyle{remark}
\newtheorem{rem}{Remark}

\def\oldvec{\mathaccent "017E\relax }
\DeclareMathOperator{\Or}{\mathsf{O\oldvec{r}}}

\newcommand{\pinner}{\mathbin{\mathchoice
{\hbox{\vrule width0.6em depth0pt height0.4pt
	\vrule width0.4pt depth0pt height0.8ex}}
{\hbox{\vrule width0.6em depth0pt height0.4pt
	\vrule width0.4pt depth0pt height0.8ex}}
{\hbox{\kern0.14em
	\vrule width0.48em depth0pt height0.4pt
	\vrule width0.4pt depth0pt height0.6ex\kern0.14em}}
{\hbox{\kern0.1em
	\vrule width0.39em depth0pt height0.4pt
	\vrule width0.4pt depth0pt height0.5ex\kern0.1em}}}}

\newcommand{\BBR}{\mathbb{R}}
\newcommand{\BBN}{\mathbb{N}}
\newcommand{\BBS}{\mathbb{S}}
\newcommand{\BBE}{\mathbb{E}}


\newcommand{\cE}{\mathcal{E}}

\newcommand{\cP}{\mathcal{P}}

\newcommand{\boldb}{{\boldsymbol{b}}}

\newcommand{\ba}{{\boldsymbol{a}}}

\newcommand{\bx}{{\boldsymbol{x}}}
\newcommand{\bX}{{\mathbf{x}}}
\newcommand{\bby}{{\boldsymbol{y}}}

\newcommand{\veps}{\varepsilon}

\newcommand{\dd}{\partial}

\newcommand{\ld}{{\text{d}}}

\DeclareMathOperator{\id}{id}

\DeclareMathOperator{\dvol}{d
vol}

\newcommand{\lshad}{[\![}
\newcommand{\rshad}{]\!]}

\newcommand{\schouten}[1]{\lshad {#1} \rshad}

\hyphenation{Kon-tse-vich Pois-son Ja-co-bi Ja-co-bi-a-tor}

\newcommand{\ground}[1]{\text{\textit{\small #1}}}

\makeatletter
\DeclareRobustCommand{\square}{\mathbin{\mathpalette\morphic@square\relax}}
\newcommand{\morphic@square}[2]{%
  \sbox\z@{$\m@th#1\rule{4pt}{4pt}$}%
  \vcenter{\box\z@}%
}
\makeatother

\begin{document}

\renewcommand{\evenhead}{ {\LARGE\textcolor{blue!10!black!40!green}{{\sf \ \ \ ]ocnmp[}}}\strut\hfill 
R.~Buring, D.~Lipper and A.\,V.\,Kiselev
}
\renewcommand{\oddhead}{ {\LARGE\textcolor{blue!10!black!40!green}{{\sf ]ocnmp[}}}\ \ \ \ \   
The hidden symmetry of Kontsevich's graph flows
}

\thispagestyle{empty}
\newcommand{\FistPageHead}[3]{
\begin{flushleft}
\raisebox{8mm}[0pt][0pt]
{\footnotesize \sf
\parbox{150mm}{{Open Communications in Nonlinear Mathematical Physics}\ \  \ {\LARGE\textcolor{blue!10!black!40!green}{]ocnmp[}}
\ \ Vol.2 (2022) pp
#2\hfill {\sc #3}}}\vspace{-13mm}
\end{flushleft}}

\FistPageHead{1}{\pageref{firstpage}--\pageref{lastpage}}{ \ \ Article}

\strut\hfill

\strut\hfill

\copyrightnote{The author(s). Distributed under a Creative Commons Attribution 4.0 International License}

\Name{The hidden symmetry of Kontsevich's graph flows on the spaces of Nambu\/-\/determinant Poisson brackets%
}

\Author{
R.~Buring$^{\,1,2}$,
D.~Lipper$^{\,3}$
and 
A.\,V.\,Kiselev$^{\,2,3,4}$}

\Address{$^{1}$ Address: 
Institut f\"ur Mathematik, 
Johannes Gutenberg\/--\/Uni\-ver\-si\-t\"at,
Staudingerweg~9, 
\mbox{D-\/55128} Mainz, Germany.
\\[2mm]
$^{2}$ Present address: Institut des Hautes $\smash{\text{\'Etudes}}$ Scientifiques ($\smash{\text{IH\'ES}}$),
35~route de Chartres, Bures\/-\/sur\/-\/Yvette, \mbox{F-91440} France.
\\[2mm]
$^{3}$ Address: Ber\-nou\-lli Institute for Mathematics, Computer Science and Artificial Intelligence, University of Groningen, P.O.~Box 407, 9700~AK Groningen, The Netherlands.
\\[2mm]
$^{4}$ Corresponding author. E-mail: \textup{\texttt{A.V.Kiselev\symbol{"40}rug.nl}}
}

\Date{Received 14 December 2021; Accepted 1 December 2022}

\setcounter{equation}{0}

\begin{abstract}
\noindent%
Kontsevich's graph flows are --\,universally for all finite\/-\/dimensional affine Poisson manifolds\,-- infinitesimal symmetries of the spaces of Poisson brackets. We show that the previously known tetrahedral flow and the recently obtained pentagon\/-\/wheel flow preserve the class of Nambu\/-\/determinant Poisson bi\/-\/vectors 
$P=\lshad \varrho(\bx)\,\dd_x\wedge\dd_y\wedge\dd_z,a\rshad$ on $\BBR^3\ni\bx=(x,y,z)$ and
$P=\lshad\lshad\varrho(\bby)\,\dd_{x^1}\wedge\ldots\wedge\dd_{x^4},a_1\rshad,a_2\rshad$ on $\BBR^4\ni\bby$,
including the general case $\varrho \not\equiv 1$. 
We detect 
that the Poisson bracket evolution $\dot{P} = Q_\gamma(P^{\otimes\operatorname{\# Vert}(\gamma)})$ is trivial in the 
second 
Poisson cohomology, $Q_\gamma = \schouten{P, \smash{\vec{X}}([\varrho],[a])}$, for the Nambu\/-\/determinant bi\/-\/vectors $P(\varrho,[a])$ on~$\BBR^3$. For the global Casimirs $\ba = (a_1$,$\ldots$,$a_{d-2})$ and inverse density $\varrho$ on~$\BBR^d$, we analyse the combinatorics of their evolution induced by the Kontsevich graph flows, namely $\dot{\varrho} = \dot{\varrho}([\varrho], [\ba])$ and $\dot{\ba} = \dot{\ba}([\varrho],[\ba])$ with differential\/-\/polynomial right\/-\/hand sides. Besides the anticipated collapse of these formulas by using the Civita symbols (three for the tetrahedron $\gamma_3$ and five for the pentagon\/-\/wheel graph cocycle $\gamma_5$), as dictated by the behaviour $\varrho(\bX') = \varrho(\bx) \cdot \det \lVert \partial \bX' / \partial \bx \rVert$ of the inverse density $\varrho$ under reparametrizations $\bx \rightleftarrows \bX'$, we discover another, so far hidden discrete symmetry in the construction of these evolution equations.
\end{abstract}
\label{firstpage}


\section{Introduction}
Kontsevich's infinitesimal symmetries $P\mapsto P+\veps Q(P)+\bar{o}(\veps)$ of the spaces of Poisson structures are universal for all finite\/-\/dimensional affine Poisson manifolds $(\mathit{M}_{\text{aff}}^d,P)$, preserving the property of the Cauchy datum $P(\veps=0)$ to remain Poisson modulo $\bar{o}(\veps)$ at~$\veps>0$. 
Differential\/-\/polynomial in the bi\/-\/vector components,
the right\/-\/hand sides~$Q(P)$ of the flows are encoded by using the graph cocycles in the Kontsevich undirected graph complex. The tetrahedral cocycle flow $\dot{P}=\Or(\gamma_3)(P^{\otimes^4})$ is the first example from the pioneering paper~\cite{Ascona96} (cf.~\cite{Bourbaki2017} and~\cite{f16}); graph cocycles beyond the tetrahedron $\gamma_3$ are discussed in~\cite{JNMP17} (see references therein); the next, higher nonlinearity degree flows are constructed for the pentagon\/-\/wheel cocycle~$\gamma_5$ in~\cite{sqs17} and for the heptagon\/-\/wheel cocycle~$\gamma_7$ in~\cite{JPCS17}. We now study the restriction of this universal construction to a particular class of Poisson brackets, so that their analytic properties repercuss in the combinatorics of algebraic structures and in the Poisson-cohomological (non)triviality of the infinitesimal deformations $P\mapsto P+\veps Q(P)+\bar{o}(\veps)$ with the markers $Q\in\text{ker}\schouten{P,\cdot}$ of second Poisson cohomology classes $[Q]\in H^2_P(\mathit{M}_{\text{aff}}^d=\BBR^d)$.

The goal of this paper is to explore the combinatorics that arises for the restriction of these symmetry flows $\dot{P}=Q(P)$ to the class of generalized Nambu\/-\/determinant Poisson brackets,
\begin{equation}\label{EqPBrNambu}
    \{f,g\}=\varrho(\bx)\cdot\det\bigl\lVert\partial(a_1,\ldots,a_{d-2},f,g)/\partial(x^1,\ldots,x^d)\bigr\rVert,
\end{equation}
with $d-2$ global Casimirs $\ba=(a_1,\ldots,a_{d-2})$ and inverse density $\varrho$ on~$\BBR^d$. These bi\/-\/vectors' components, referred to a system of (global, e.g., Cartesian) coordinates, are
\begin{equation}\label{EqBiVectNambu}
P^{ij}= \{x^i,x^j\} = \varrho(\bx)\cdot \sum\nolimits_{i_1,\ldots,i_{d-2}} \veps^{i_1\cdots i_{d-2}ij}\cdot \frac{\dd a_1}{\dd x^{i_1}}\cdots\frac{\dd a_{d-2}}{\dd x^{i_{d-2}}},
\end{equation}
where $\veps^{\vec{\imath}}$ is the Civita symbol.

\begin{example}\label{ExNambuR3}
Among the most well known examples of Poisson structures from this class we recall, for instance,\\[0.5pt] 
$\bullet$\quad 
the Euler top bracket $\{x^i,x^j\}=\veps^{ijk}\cdot x^k$ on $\BBE^3\simeq\textit{so}(3)^*$, that is $\{x,y\}=z$ and so on w.r.t.\ the signed permutations $\sigma\in \BBS_3$. This bracket is Nambu\/-\/class with $\varrho\equiv1$ and the global polynomial Casimir $a(x,y,z)=\tfrac{1}{2}(x^2+y^2+z^2)$.\\[1pt]
$\bullet$\quad the log\/-\/symplectic bracket $\{x,y\}=\tfrac{1}{2}xy$ (and so on, cyclically), given on~$\BBR^3$ with $\varrho\equiv1$ by the Casimir $a=\tfrac{1}{2}xyz$. This bracket is important in deformation quantization (on~$\BBR^2\subset\BBR^3$) since it is expected 
that $x\star y=\text{exp}(\hbar)\cdot y\star x$ for the associative noncommutative star-product with this Poisson bracket, $\{x,y\}=xy$, in the leading deformation term (see \cite{Operads1999,DQonAlgVarieties,MKLefschetzLectures} and~\cite{BPP}).
\end{example}

Linear in the functional parameters $\varrho$ and $\ba$, the Nambu\/-\/determinant 
bi\/-\/vectors~\eqref{EqBiVectNambu}
constitute a large class of Poisson structures which are special in the following sense.
Firstly,   
for any choice of $\varrho\not\equiv1$, Nambu\/-\/determinant Poisson brackets~\eqref{EqPBrNambu}
admit the maximal set of~$d-2$ 
Casimirs $\ba=(a_1$,$\ldots$,$a_{d-2})$. The space~$\BBR^d$ is foliated by the intersections of the level sets $\{a_i=\text{const}\}$ into symplectic leaves, which are generally two\/-\/dimensional: e.g., consider the concentric spheres $\{\bx$~$|$ 
$x^2+y^2+z^2=\text{const}>0\}$ for the Euler top.
In consequence,  
Nambu\/-\/determinant Poisson brackets~\eqref{EqPBrNambu} 
all have rank not exceeding two, so that all the minors of size $3\times3$ (and higher) in its coefficient matrix~\eqref{EqBiVectNambu} vanish for this class of brackets. This is not always so for other types: e.g., consider the nondegenerate symplectic case on~$\BBR^{2d}$.

And not every Poisson bracket on~$\BBR^3$ admits a global polynomial Casimir $a\not\equiv\text{const}$ 
if the coefficients $P^{ij}$ of the bi\/-\/vector $P$ are polynomial. (Whereas for the Nambu class this is achieved tautologically by taking $\varrho,a_i\in\BBR[x^1,\ldots,x^d]$ in any fixed system of affine coordinates on~$\BBR^d$ in any dimension $d\geqslant3$.)

\begin{counterexample}[\cite{MKConjecture13Dec2019adot}]\label{ExPBrNoPolynomCasimir}
On~$\BBR^d$ with Cartesian coordinates $\bx=(x^1,\ldots,x^d)$, consider the Euler vector field $\smash{\vec{E}}=\sum_i x^i\cdot\partial/\partial x^i$ and, for any $k\geqslant2$, take another homogeneous vector field $\vec{V}=\sum_j (x^j)^k\cdot\partial/\partial x^j$. By definition, put $P\mathrel{{:}{=}}\vec{V}\wedge\vec{E}$. Then the bi\/-\/vector $P$ is Poisson --- yet it does not admit any non-constant global polynomial Casimir $a$ on~$\BBR^d$. (A proof is recalled in Appendix~\ref{AppMKPoisson}, see p.~\pageref{AppMKPoisson} below.)
\end{counterexample}

In the same context of competing ``generic vs special", Kontsevich's graph flows provide (markers of the) second Poisson cohomology classes $Q([P])\in\text{ker}\schouten{P,\cdot}$ in an extremely broad setting: indeed, universally for all finite\/-\/dimensional affine Poisson manifolds $(\mathit{M}_{\text{aff}}^d,P)$. This automatically poses the problem of (non)triviality for these Poisson cohomology classes $[Q]\in H_P^2(\mathit{M}_{\text{aff}}^d)$. We recall from \cite{f16,OrMorphism} that for nontrivial graph cocycles~$\gamma$ in the Kontsevich undirected graph complex, there does not exist any mechanism that would trivialize the flows $\dot{P}=\Or(\gamma)(P^{\otimes\#\text{Vert}(\gamma)})$ at the level of Kontsevich's graphs, that is, by using a would\/-\/be universal trivializing vector field $\smash{\vec{X}}$ again determined within the graph language, and hence by a 
formula that would work uniformly in \emph{all} dimensions. 
For instance, such is manifestly the case for the tetrahedron $\gamma_3$, for the pentagon\/-\/wheel cocycle $\gamma_5$, etc.
In other words, the coboundary equation,
\[
\Or(\gamma)(P^{\otimes\#\text{Vert}(\gamma)}) - \schouten{P,\smash{\vec{X}}(\gamma')} = \Diamond(P,\schouten{P,P}),
\]
has no solution $(\gamma',\Diamond)$ in terms of graphs~$\gamma'$ and Leibniz graphs~$\Diamond$
for the main sequence of nontrivial graph cocycles $\gamma_3,\gamma_5,\gamma_7,\ldots$ and their iterated commutators.
Let us emphasize that the Kontsevich graph language from~\cite{Ascona96} is universal for all dimensions $d\geqslant2$ of the Poisson manifold at hand; we detect that for the cocycles~$\gamma_3$, $\gamma_5$, and~$\gamma_7$ the coboundary equation has no solution if the dimension $d\geqslant3$ is not fixed \textit{a priori}.
But if $d=2$, the graph $\gamma'$ trivializing the tetrahedral $\gamma_3$-flow is found in \cite{f16});
likewise, in~\cite[Ch.\,6]{BuringDisser} the trivializing vector fields~$\vec{X}$ over $d=2$ were found for the pentagon\/-\/wheel $\gamma_5$-\/flow and for the $\gamma_7$-\/flow. In all these cases, namely for the graph cocycles $\gamma_3$, $\gamma_5$, and $\gamma_7$, the trivializing vector fields~$\vec{X}$ on~$\BBR^2$ are Hamiltonian w.r.t.\ the standard symplectic structure on~$\BBR^2$ and w.r.t. three Hamiltonians $H_3$, $H_5$, and~$H_7$ which are also available from~\cite[Ch.\,6]{BuringDisser}.
Moreover, when the dimension is fixed to $d=2$, these three Hamiltonians themselves are also realized by using Kontseivich graphs. We conclude that in a given dimension~$d\geqslant2$, there can appear more objects specific to Kontsevich's graph flows over Poisson manifolds of that dimension; these new objects such as the trivializing vector fields~$\vec{X}$ and their Hamiltonians~$H$ can again be realized by using the Kontsevich graph language or its extensions: see~\S\ref{SecVFTrivialFlows} below (where we have $d=3$).
The present work serves to continue --\,from~\cite{f16,BuringDisser,OrMorphism,sqs19}\,-- the line of study on the Poisson (non)\/triviality of Kontsevich's graph flows in arbitrary or prescribed dimension~$d\geqslant3$.

The fact we discover is that for several 
classes of Poisson structures, the Kontsevich graph flows \textit{are} Poisson\/-\/trivial, so that the resulting shifts $Q([P])=\Or(\gamma)(P^{\otimes^n})$ of Poisson bi\/-\/vectors $P$ are induced by highly nonlinear, non\/-\/affine reparametrizations of the base coordinates --\,along the integral trajectories of the trivializing vector fields $\smash{\vec{X}}$\,-- on the \textit{affine} Poisson manifolds~$\mathit{M}_{\text{aff}}^d$. Such is the case for the Nambu\/-\/determinant class of brackets~$P(\varrho,[a])$ on~$\BBR^3$ and the tetrahedral graph flow preserving it. We establish the fact of trivialization and we collapse the formula of the vector field $\smash{\vec{X}}([\varrho],[a])$ by using the features of the Nambu\/--\/Poisson geometry under study. (All these analytic and combinatorial results are verified by direct calculation.) 
We express the vector field~$\smash{\vec{X}}$ in terms of ``micro\/-\/graphs'' the construction of which is specific to dimension $d=3$; it remains to explain the work of trivialization and collapse mechanism in a way which would allow extension to~$d\geqslant3$.

\begin{rem}\label{RemMakeRhoUnit}
For a chosen volume element $\dvol(\bx)=\ld\bx/\varrho(\bx)$ with smooth~$\varrho$, needed for 
construction of the Nambu\/-\/determinant bi\/-\/vectors $P=\ld\ba/\dvol(\bx)$, the zero locus of the inverse density $\varrho$ provides a tiling of the affine space~$\BBR^d$.
Inside each cell bounded by the walls $\{\bx$~$|$ $\varrho(\bx)=0\}$, 
that is on every maximal subset where the restriction of~$\varrho$ is nowhere vanishing,
the inverse density can be brought to a constant $\varrho'(\mathbf{x}')\equiv\pm1$ by a (non)\/linear, pointwise\/-\/dependent rescaling of local coordinates. The restriction of the graph flows to the subclass of `genuine' Nambu\/-\/determinant brackets $P=\ld\ba/\ld\bx$ can either degenerate (e.g., for the tetrahedral flow over~$\BBR^3$) or stay nonzero (e.g., for the tetrahedral flow over~$\BBR^4$), see below. In all these cases, the trivializing vector fields $\smash{\vec{X}}$ behave in a usual way, as tensors do, in the course of such transformations to the normal coordinates; note that the vector fields $\smash{\vec{X}}$ can also acquire arbitrary Poisson\/-\/exact 
summands~$\schouten{P,H}$. Yet the construction of the normal coordinates satisfying $\varrho'(\mathbf{x}')=\pm1$ is \textit{a priori} not correlated at all with the affine structure --- which the graph flows refer to.\\[0.5pt]
\centerline{\rule{1in}{0.7pt}}
\end{rem}

\noindent This paper is structured as follows. In \textsection \ref{SecPrelim} we recall the construction of Nambu\/--\/determinant Poisson brackets $P(\varrho,[\ba])$ on~$\BBR^d$ as derived brackets and we inspect how their elements~$\varrho$ and~$\ba$ are deformed along vector fields~$\vec{X}$ on~$\BBR^d$. We recall also the construction of 
Kontsevich's graph flows on spaces of Poisson structures over affine Poisson manifolds $(\mathit{M}_{\text{aff}}^d,P)$. Next, in \textsection \ref{SecStability} we detect that the Nambu class of Poisson brackets on~$\BBR^3$ and~$\BBR^4$ is preserved by the graph flows for the tetrahedral cocycle~$\gamma_3$ and by the pentagon\/-\/wheel cocycle $\gamma_5$ over~$\BBR^3$. The structure of induced evolution $\dot{\varrho}([\varrho],[\ba]),\dot{a}([\varrho],[\ba])$ is then put, in \textsection \ref{SecHowEvolve}, in correspondence with the original graph cocycle, and the formulas of induced velocities are collapsed by using the Civita symbols (one per graph vertex minus one overall: e.g., three symbols for the tetrahedron); the affine structure of~$\BBR^d$ is crucial at that point. In \textsection \ref{SecMarkers} we analyze the algebra and combinatorics of the marker\/-\/monomials under the sums with multiple Civita symbols. Here we discover an extra symmetry of the Kontsevich graph flows' restriction to the spaces of Nambu\/-\/determinant Poisson structures. Finally, we establish in~\S\ref{SecVFTrivialFlows} that the tetrahedral flow over~$\BBR^3$ is Poisson-cohomology trivial, and we collapse the formula of the trivializing vector field~$\smash{\vec{X}}$ by using the same mechanism of Civita symbols as before. The paper concludes with a list of open problems about the graph flows and combinatorics of their restrictions to the Nambu class of brackets.

\section{Preliminaries} \label{SecPrelim}
\subsection{The generalized Nambu\/-\/determinant Poisson brackets}
In the context of quark dynamics and $n$-ary interactions, Nambu introduced (\cite{Nambu}, cf.~\cite{Donin1997,Grabowski93}) a class of Poisson brackets with global Casimirs $\ba=(a_1$,\ $\ldots$,\ $a_{d-2})$ on~$\BBR^d\ni\bx$: the Poisson bi\/-\/vectors are derived --\,w.r.t.\ the top\/-\/degree multivector $\varrho(\bx)\cdot\dd_{x^1}\wedge\ldots\wedge\dd_{x^d}$ on~$\BBR^d$\,-- using the Schouten bracket~$\lshad\cdot,\cdot\rshad$,
\[
P=\lshad\cdots\lshad \varrho(\bx)\,\dd_{x^1}\wedge\ldots\wedge\dd_{x^d}, a_1\rshad\ldots, a_{d-2}\rshad,
\]
with a not necessarily constant inverse of the volume density,~$\varrho(\bx)$. The coordinate expressions are, for example,
\[
\{f,g\} = \varrho(\bx)\cdot\left|\frac{\partial(a,f,g)}{\partial(x,y,z)}\right| =
  \varrho(x,y,z)\cdot \left| \begin{matrix} a_x & f_x & g_x \\ a_y &
f_y & g_y \\ a_z & f_z & g_z \end{matrix}\right|
\]
on~$\BBR^3\ni\bx=(x,y,z)$, and likewise,
\[
    \{f,g\} = \varrho(x,y,z,w)\cdot\left|\frac{\partial(a_1,a_2,f,g)}{\partial(x,y,z,w)}\right|
\]
on~$\BBR^4$ with global (e.g., Cartesian) coordinates $\bx=(x,y,z,w)$. It is obvious that the given functions $a_i$ which show up in the construction of the bi\/-\/vector $P$ Poisson\/-\/commute with any argument $f\in C^\infty(\BBR^d)$.
The scalar functions $a_i(\bx)=a_i(\bX'(\bx))$ do not change under the coordinate transformations $\bx(\bX')\rightleftarrows\bX'(\bx)$. Given two scalar functions $f,g\in C^\infty(\BBR^d)$, their Poisson bracket is also a scalar function. To counterbalance the behaviour of the Jacobian determinant in the course of coordinate transformations,
\[
   \left|\frac{\partial(a,f,g)}{\partial(x,y,z)}\right| = \left|\frac{\partial(a,f,g)}{\partial(x',y',z')}\right| \cdot \left|\frac{\partial(x',y',z')}{\partial(x,y,z)}\right|,
\]
the coefficient~$\varrho$ of top\/-\/degree multivector on~$\BBR^d$ behaves accordingly, $\varrho(\bx)\rightleftarrows\varrho'(\bX')$. E.g., on~$\BBR^3$ we have that
\[
   \varrho(x,y,z) \cdot \left|\frac{\partial(x',y',z')}{\partial(x,y,z)}\right| = \varrho'(x',y',z'),
\]
with an elementary general fact that $\ld\bx/\varrho(\bx)=\ld\bX'/\varrho'(\bX')$ and 
\begin{equation} \label{EqChangeRho}
\varrho(\bx)\cdot\left|
{\partial(\bX')}\big/{\partial\bx}\right|=\varrho'(\bX')
\end{equation}
for all dimensions $d\geqslant3$. So, let us keep in mind that the coefficient 
$\varrho(\bx)=\varrho'(\bX')\cdot\left|\partial\bx/\partial\bX'\right|$ of the top\/-\/degree multivector $\varrho(\bx)\cdot\dd_{x^1}\wedge\ldots\wedge\dd_{x^d}$
is nontrivially reparametrized under the changes $\bx(\bX')\rightleftarrows\bX'(\bx)$, whereas the scalar functions~$a_i$ are not transformed. Let us remember also that so far, the coordinate changes could be arbitrarily nonlinear, that is, not necessarily linear or affine on~$\BBR^d$.

\begin{lemma}\label{Lemma3DtrivialEvolve}
Let $a\colon\mathbb{R}^3 \to \mathbb{R}$ be a differentiable function, $\vec{Y} \in \Gamma(T\mathbb{R}^3)$ be a ($C^1$-)vector field on $\mathbb{R}^3$, and $T \in \Gamma(\wedge^3 T\mathbb{R}^3)$ be a (differentiable) tri\/-\/vector on~$\mathbb{R}^3$; refer it to any global (e.g., Cartesian) coordinates $x,y,z$ on $\mathbb{R}^3$ by the formula $T  =\varrho(x,y,z)\, \partial_x \wedge \partial_y \wedge \partial_z$.
For convenience, put $\vec{X} = -\vec{Y}$.
Then the scalar function~$a$ and tri\/-\/vector~$T$ evolve along the integral trajectories of the vector field~$\vec{Y}$ such that their Lie derivatives are, respectively,
\begin{subequations}
\label{EqEvolveARhoAlongVF}
\begin{align}
L_{\vec{Y}}(a) &= \schouten{\vec{Y}, a} = \vec{Y}(a) = -\vec{X}(a)\\
\intertext{and}
L_{\vec{Y}}(T) &= \schouten{\vec{Y}, T} = \schouten{T, \vec{X}},
\end{align}
\end{subequations}
so that $\left(\tfrac{\partial}{\partial t_Y}\varrho\right) \cdot \partial_x \wedge \partial_y \wedge \partial_z = \schouten{\varrho\, \partial_x \wedge \partial_y \wedge \partial_z, \vec{X}},$
where $t_Y$ is the natural parameter along an integral trajectory of $\vec{Y}$.\\
$\bullet$\quad Let $P$ be a Poisson bi-vector on $\mathbb{R}^3$.
Whenever the vector field~$\vec{Y}$ is shifted by a Hamiltonian vector field $\schouten{P, H}$, the evolutions of the scalar function~$a$ and of the coefficient~$\varrho$ in~$T$ respond by 
\begin{align*}
L_{\vec{Y} + \schouten{P,H}}(a) &= \vec{Y}(a) + \{a, H\}_P \\
\intertext{and }
L_{\vec{Y} + \schouten{P,H}}(T) &= \frac{\partial}{\partial t_Y} \varrho \cdot \partial_x \wedge \partial_y \wedge \partial_z + \partial_P(\schouten{T,H}),
\end{align*}
where $\partial_P = \schouten{P, \cdot}$ is the Poisson differential.\\
$\bullet$\quad Consider the generalized Nambu\/-\/determinant Poisson bi\/-\/vector $P_{\text{3D}} = \schouten{T, a}$ on~$\mathbb{R}^3$.
Trivial in its second Poisson cohomology, the evolution of this Poisson bi\/-\/vector,
\[
L_{\vec{Y}}(P) = \schouten{Y, P} = \partial_P(\vec{X}),
\] 
correlates with its evolution,
\[
\frac{\partial P}{\partial t_Y}([\varrho], [a]) = P(\frac{\partial \varrho}{\partial t_Y}, a) + P(\varrho, \frac{\partial a}{\partial t_Y}),
\] 
as soon as that is induced from evolution~\eqref{EqEvolveARhoAlongVF} of the Casimir~$a$ and coefficient~$\varrho$ of the tri\/-\/vector~$T$.
\end{lemma}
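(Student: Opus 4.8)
The plan is to derive all three bullets as formal consequences of three structural facts about the Schouten bracket $\schouten{\cdot,\cdot}$: that the Lie derivative along a vector field is $L_{\vec{Y}}(\cdot)=\schouten{\vec{Y},\cdot}$, that $\schouten{\cdot,\cdot}$ is graded antisymmetric and satisfies the graded Jacobi identity, and the one genuinely dimension\/-\/specific input that every multivector of degree exceeding $3$ vanishes on $\BBR^3$. Almost nothing here is analytic; $C^1$\/-\/differentiability is used only so that the Lie derivatives make sense.

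First I would dispatch the opening claim. The function identity $L_{\vec{Y}}(a)=\schouten{\vec{Y},a}=\vec{Y}(a)=-\vec{X}(a)$ is just the definition of the Lie derivative of a function together with $\vec{X}=-\vec{Y}$. For the tri\/-\/vector, graded antisymmetry gives $\schouten{\vec{Y},T}=-(-1)^{(|\vec{Y}|-1)(|T|-1)}\schouten{T,\vec{Y}}=\schouten{T,-\vec{Y}}=\schouten{T,\vec{X}}$, since the sign exponent $(|\vec{Y}|-1)(|T|-1)=0\cdot 2$ is even. The displayed equation is then only a matter of recording that on $\BBR^3$ every tri\/-\/vector is a scalar multiple of the coordinate\/-\/constant basis $\partial_x\wedge\partial_y\wedge\partial_z$, so that $L_{\vec{Y}}(T)$ is by definition $(\partial\varrho/\partial t_Y)\,\partial_x\wedge\partial_y\wedge\partial_z$.

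For the second bullet I would use bilinearity of $\schouten{\cdot,\cdot}$ to split $L_{\vec{Y}+\schouten{P,H}}=L_{\vec{Y}}+L_{\schouten{P,H}}$; the $\vec{Y}$\/-\/part is already known, so only $L_{\schouten{P,H}}=\schouten{\schouten{P,H},\cdot}$ is new. On functions, $\schouten{P,H}$ is the Hamiltonian vector field and $\schouten{P,H}(a)=\{a,H\}_P$ by definition, which is the first formula. On the tri\/-\/vector the computation is the heart of the bullet: by the graded Jacobi identity, $\schouten{\schouten{P,H},T}$ rearranges into $\partial_P(\schouten{T,H})=\schouten{P,\schouten{T,H}}$ plus a residual double bracket whose innermost factor is $\schouten{P,T}$. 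But $\schouten{P,T}$ is a $4$\/-\/vector on $\BBR^3$, hence vanishes identically ($\wedge^4 T\BBR^3=0$), leaving exactly $\partial_P(\schouten{T,H})$ up to the sign fixed by the Schouten conventions in force. This dimensional collapse is the special feature that makes the Hamiltonian shift act on $T$ solely through the Poisson differential.

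Finally, for the third bullet I would note that $L_{\vec{Y}}=\schouten{\vec{Y},\cdot}$ is an \emph{even} derivation of the Schouten bracket (the $|\vec{Y}|-1=0$ case of Jacobi carries no signs). Applying it to $P=\schouten{T,a}$ by the Leibniz rule,
\[
L_{\vec{Y}}(P)=\schouten{L_{\vec{Y}}(T),a}+\schouten{T,L_{\vec{Y}}(a)},
\]
where, by the first bullet, the first summand is $\schouten{(\partial\varrho/\partial t_Y)\,\partial_x\wedge\partial_y\wedge\partial_z,\,a}=P(\partial\varrho/\partial t_Y,a)$ and the second is $\schouten{T,\partial a/\partial t_Y}=P(\varrho,\partial a/\partial t_Y)$, since $P(\varrho,a)=\schouten{T,a}$ is linear in the coefficient $\varrho$ of $T$ and in the gradient of $a$; this is precisely the claimed correlation. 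Triviality is then immediate from antisymmetry: $L_{\vec{Y}}(P)=\schouten{\vec{Y},P}=\schouten{P,\vec{X}}=\partial_P(\vec{X})$ is a Poisson coboundary, and since $P$ is Poisson ($\schouten{P,P}=0$, so $\partial_P^2=0$) the class $[L_{\vec{Y}}(P)]$ vanishes in $H^2_P(\BBR^3)$. The routine part of all this is the sign bookkeeping in the shifted\/-\/degree Jacobi and antisymmetry identities; the one conceptual step I expect to carry the weight is the vanishing $\schouten{P,T}=0$ on $\BBR^3$, which isolates the Poisson\/-\/differential term in the second bullet and, combined with the derivation property, yields the clean $(\partial\varrho/\partial t_Y,\partial a/\partial t_Y)$\/-\/factorization in the third.
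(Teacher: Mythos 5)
Your proposal is correct and takes essentially the same route as the paper's proof: the first display is treated as standard multivector calculus, and the remaining two bullets are reduced to the Leibniz-rule shape \eqref{JacForSchoutenAsLeibniz} of the graded Jacobi identity for the Schouten bracket (your third-bullet expansion is exactly the paper's instantiation $A=\vec{X}$, $B=T$, $Z=a$). The only detail you make explicit that the paper leaves implicit is the dimension-specific vanishing $\schouten{P,T}=0$, a $4$-vector on $\BBR^3$, which kills the residual Jacobi term in the Hamiltonian-shift formula for $T$.
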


\begin{proof}
Formulas \eqref{EqEvolveARhoAlongVF} are standard in the calculus of multivectors.
The second 
claim and the correlation of evolutions amount to the Leibniz rule shape,
\begin{equation}\label{JacForSchoutenAsLeibniz}
\schouten{A, \schouten{B,Z}} = \schouten{\schouten{A,B}, Z} + (-)^{(|A|-1)(|B|-1)} \schouten{B, \schouten{ A, Z}}
\end{equation}
of the Jacobi identity for the Schouten bracket $\schouten{\cdot, \cdot}$ for (homogeneous) multivectors: e.g., take $A = \vec{X}, B = T$ and~$Z = a$.
\end{proof}

\begin{rem}
In the same way, now by repeated use of the Jacobi identity for $\schouten{\cdot, \cdot}$, one verifies that a Poisson\/-\/trivial evolution $Q = \schouten{P, \vec{X}}$ of the generalized Nambu\/-\/determinant Poisson bi\/-\/vector $P_{\text{4D}} = \schouten{ \schouten{K, a_1}, a_2}$ on $\mathbb{R}^4$ is correlated with the evolution of two Casimirs $a_1$ and $a_2$,
\begin{align*}
\tfrac{\partial}{\partial t} a_i &= -\vec{X}(a_i), \qquad i=1,2,\\
\intertext{and of the top-degree multivector $K = \varrho(x,y,z,w)\,\partial_x \wedge \partial_y \wedge \partial_z \wedge \partial_w$,}
\tfrac{\partial}{\partial t} \varrho &\cdot \partial_x \wedge \partial_y \wedge \partial_z \wedge \partial_w = \schouten{K, \vec{X}}.
\end{align*}
Indeed, we have that 
\begin{multline*}
\schouten{P_{\text{4D}}, \vec{X}} = \schouten{-\vec{X}, \schouten{\schouten{K,a_1}, a_2}} \\
{}= \schouten{ \schouten{ \schouten{-\vec{X}, K}, a_1 }, a_2} + \schouten{ \schouten{K, \schouten{-\vec{X}, a_1}}, a_2} + \schouten{ \schouten{K,a_1}, \schouten{-\vec{X}, a_2}}.
\end{multline*}
Whenever the vector field $\vec{Y} = -\vec{X}$ on~$\mathbb{R}^4$ is shifted by a Hamiltonian vector field $\schouten{P, H}$, the (signs in the) shifts of evolutions are obtained, along the above lines, for the Casimirs~$a_i$ and $4$-vector $\varrho\, \partial_x \wedge \partial_y \wedge \partial_z \wedge \partial_w$.
\end{rem}

\subsection{Kontsevich's graph flows} \label{SecGraphFlows}
In the seminal paper~\cite{Ascona96} (see also~\cite{Bourbaki2017} and~\cite{f16,sqs17,OrMorphism,Open2019,skew22,Banach19} for illustrations and discussion), Kontsevich designed a method to construct infinitesimal symmetries $P=Q([P])$ of the spaces of Poisson structures on affine finite\/-\/dimensional Poisson manifolds $(\mathit{M}^d_{\text{aff}},P)$. The construction is universal for all such geometries (with $\mathbf{x}'=A\bx+\boldb$ as the only admissible coordinate reparametrizations). The right\/-\/hand side $Q$ of the evolution $\dot{P}=Q([P])$, differential\/-\/polynomial in the components of the bi\/-\/vector $P$, is described by using linear combinations (with real coefficients) of directed graphs; these graphs are built of wedges ${\leftarrow}{\bullet}{\rightarrow}$ with prescribed ordering Left $\prec$ Right of the outgoing arrows 
in every internal vertex. Each edge is decorated with its own summation index which runs from 1 to the dimension $d=\text{dim }M^d$; each decorated edge $\xrightarrow{\phantom{ll}i\phantom{lll}}$ corresponds to the derivative $\partial/\partial x^i$ w.r.t.\ a local coordinate in an affine chart of~$\mathit{M}^d$; each internal vertex of the directed graph is inhabited by a copy of the Poisson bi\/-\/vector~$P=(P^{ij}(\bx))$. Each graph determines a differential\/-\/polynomial expression (w.r.t.\ the structure $P$ and the content of sink vertices) in a natural way: take the product of the (differentiated) contents of the
vertices and sum over all the indices. Two factors, namely (\textit{i}) the contraction of lower indices --\,from $\partial/\partial x^i$ and $\partial/\partial x^j$ on the respective Left and Right outgoing edges\,-- with the first and second indices $i,j$ in the skew-symmetric bi\/-\/vector components $P^{ij}(\bx)$ in the arrowtail vertex, and (\textit{ii}) the independence of the Jacobians $A$ (in the affine changes $\mathbf{x}'=A\bx+\boldb$) from a point of two charts' overlap, make the Kontsevich construction well defined for an arbitrary choice of local affine coordinates on~$(\mathit{M}^d_{\text{aff}},P)$.

The graph cocycles $\gamma$ on $n$ vertices and $2n-2$ edges in the Kontsevich undirected graph complex (see~\cite{Ascona96} as well as~\cite{OrMorphism,Banach19} and references therein), when directed (inheriting the edge ordering from $\gamma$) and evaluated at $n$ copies of a given Poisson bi\/-\/vector, yield a natural class of Kontsevich's  graph flows $\dot{P}=\Or(\gamma)(P^{\otimes^n})$ on the spaces of Poisson structures. Willwacher's construction of suitable graph cocycles $\gamma$ from the Grothendieck\/--\/Teichm\"uller Lie algebra $\mathfrak{grt}$ gives us the main sequence to work with: Kontsevich's tetrahedron $\gamma_3$ (which is the wheel graph with three spikes), the Kontsevich\/--\/Willwacher pentagon\/-\/wheel cocycle $\gamma_5$ (see~\cite{JPCS17,JNMP17} and~\cite{sqs17}), the heptagon-wheel cocycle $\gamma_7$ (see~\cite{JNMP17} and~\cite{OrMorphism}), etc., and their iterated commutators (always on $n$ vertices and $2n-2$ edges, for instance with 9 vertices and 16 edges in $[\gamma_3,\gamma_5]$). The construction of Lie brackets on the vector space of graphs with wedge ordering of edges is explained in~\cite{Ascona96} and~\cite{WillwacherGRT2010-2015,JNMP17}.

\begin{example}[\cite{Ascona96,Bourbaki2017} and~\cite{f16,Banach19}]\label{ExG3-Flow}
The tetrahedron $\gamma_3$, when oriented by the morphism $\Or$ to the balanced (by $8:24=1:3$) sum,
\[
\tfrac{1}{2}\bigl( \Gamma_1(\mathit{1},\mathit{2}) - \Gamma_1(\mathit{2},\mathit{1}) \bigr) 
   + 3\bigl(\Gamma_2'(\mathit{1},\mathit{2}) - \Gamma_2'(\mathit{2},\mathit{1}) \bigr),
\]
of two skew\/-\/symmetrized bi\/-\/vector graphs built of wedges (see Fig.~\ref{FigTetraFlow})
\begin{figure}[htb] 
\unitlength=1mm
\special{em:linewidth 0.4pt}
\linethickness{0.4pt}
\begin{picture}(30.00,30.00)
\put(20.00,30.00){\vector(-1,-2){7.67}}
\put(12.33,15.00){\vector(4,3){14.67}}
\put(20.00,30.1){\vector(-4,3){0}}
\put(27.00,26){\line(-5,3){7.00}}
\put(20.00,29.67){\vector(1,-4){4.67}}
\put(24.7,10.65){\vector(1,-2){5.00}}
\put(24.7,10.65){\vector(-1,-2){5.00}}
\put(12.33,15.00){\vector(3,-1){12.33}}
\put(27.00,26.00){\line(-1,-6){2.2}}
\put(24.85,12.5){\vector(-1,-4){0.2}}
\put(0.00,14.67){\makebox(0,0)[lc]{$\Gamma_1={}$}}
\put(16,22.33){\makebox(0,0)[rb]{\tiny$R$}}
\put(22.7,17.67){\makebox(0,0)[rb]{\tiny$L$}}
\put(22.67,28.33){\makebox(0,0)[lb]{\tiny$R$}}
\put(25.67,18.5){\makebox(0,0)[lc]{\tiny$L$}}
\put(19.5,20.00){\makebox(0,0)[rb]{\tiny$R$}}
\put(17.67,12.33){\makebox(0,0)[rt]{\tiny$L$}}
\put(21.33,5.00){\makebox(0,0)[rc]{\tiny$L$}}
\put(28.33,5.00){\makebox(0,0)[lc]{\tiny$R$}}
\end{picture}
\qquad
\unitlength=1mm
\special{em:linewidth 0.4pt}
\linethickness{0.4pt}
\begin{picture}(30.00,30.00)
\put(20.00,30.00){\vector(-1,-2){7.67}}
\put(27,26){\vector(-4,-3){14.5}}
\put(20.00,30.1){\vector(-4,3){0}}
\put(27.00,26){\line(-5,3){7.00}}
\put(20.00,29.67){\vector(1,-4){4.67}}
\put(24.7,10.65){\vector(0,-1){7.00}}
\put(12.33,15){\vector(0,-1){7.00}}
\put(12.33,15.00){\vector(3,-1){12.33}}
\put(27.00,26.00){\line(-1,-6){2.22}}
\put(27.00,26){\vector(1,4){0}}
\put(0.00,14.67){\makebox(0,0)[lc]{$\Gamma_2'={}$}}
\put(15.83,22.33){\makebox(0,0)[rb]{\tiny$k'$}}
\put(22.00,17.67){\makebox(0,0)[rb]{\tiny$\ell$}}
\put(22.67,29.33){\makebox(0,0)[lb]{\tiny$m'$}}
\put(26,18.33){\makebox(0,0)[lc]{\tiny$j$}}
\put(19.33,20.00){\makebox(0,0)[rb]{\tiny$\ell'$}}
\put(17.67,13.33){\makebox(0,0)[rt]{\tiny$k$}}
\put(11.33,10.00){\makebox(0,0)[rc]{\tiny$m$}}
\put(25.5,8.00){\makebox(0,0)[lc]{\tiny$i$}}
\end{picture}
\caption{The components of Kontsevich's tetrahedral flow 
$\dot{P}(\mathit{1},\mathit{2}) = \Gamma_1(\mathit{1},\mathit{2}) + 3\bigl(\Gamma_2'(\mathit{1},\mathit{2}) - \Gamma_2'(\mathit{2},\mathit{1}) \bigr)$
on the space of Poisson bi\/-\/vectors~$P$ on~$\BBR^d$ in any dimension~$d\geqslant3$.}\label{FigTetraFlow}
\end{figure}
now encodes the differential\/-\/polynomial velocity of Poisson bi\/-\/vectors:
\begin{multline*}
Q_{\text{tetra}}(P) = 
 1\cdot \biggl(\frac{\partial^{3}P^{ij}}{\partial x^{k} \partial x^{\ell}  \partial x^{m}} \frac{\partial P^{kk'} }{\partial x^{\ell'}} \frac{\partial P^{\ell\ell'}}{\partial x^{m'}} \frac{\partial P^{mm'}}{\partial x^{k'}}\biggr)\frac{\partial}{\partial x^{i}} \wedge \frac{\partial}{\partial x^{j}}   \\  
{} + 3 \cdot \biggl( \frac{\partial^{2} P^{ij}}{\partial x^{k} \partial x^{\ell}} \frac{\partial^{2} P^{km}}{\partial x^{k'} \partial x^{\ell'} }  \frac{\partial P^{k'\ell}}{\partial x^{m'}}   \frac{\partial P^{m'\ell'}}{\partial x^{j}}\biggr)\frac{\partial}{\partial x^{i}} \wedge \frac{\partial}{\partial x^{m}}.
\end{multline*}
Indeed, we place copies of a given bi\/-\/vector $P$ into the internal vertices, match their first and second indices with the summation indices that decorate the arrows (note that the ordering, available in the digraph encoding in \textit{loc.\ cit.}, is not everywhere displayed in Fig.~\ref{FigTetraFlow}, but it is easily retrieved from the differential\/-\/polynomial formula), and for all values of all the indices, we take the sum of products of all the differentiated contents of the vertices. It is clear that for an arbitrary affine Poisson manifold, the flow $\dot{P}=\Or(\gamma_3)(P^{\otimes^4})$ is coordinate-free.

Other examples of nonlinear proper ($\not\equiv0$ if $P$ is Poisson) Kontsevich's graph flows are constructed in~\cite{sqs17} for the pentagon\/-\/wheel cocycle~$\gamma_5$ and in~\cite{OrMorphism} for the heptagon\/-\/wheel cocycle~$\gamma_7$ (see also~\cite{Banach19}).
\end{example}

\section{Structural stability of the Nambu\/-\/determinant brackets under Kontsevich's flows} \label{SecStability}
\noindent The first main question which we explore in this note is how, in precisely which way the class~\eqref{EqBiVectNambu} 
of generalized $(\varrho\not\equiv1)$ Nambu\/-\/determinant bi\/-\/vectors $P(\varrho,[\ba])$ on~$\BBR^d$ is stable under Kontsevich's universal deformations $\dot{P}=Q_{\gamma}([P])$ given by the graph cocyles $\gamma$. The other question for us to explore is the combinatorial mechanism of this stability. So, let us first inspect how the infinitesimal symmetries $\dot{P}=\Or(\gamma)(P^{\otimes^{\#V(\gamma)}})$ of the --\,actually, unknown\,-- space of \textit{all} Poisson brackets $P$ on~$\BBR^d$ (where $\BBR^d$~is viewed as an affine manifold) restrict to the subspace of Nambu\/-\/determinant Poisson brackets.

Because the Nambu\/-\/determinant bi\/-\/vectors $P(\varrho,[\ba])= \lshad\cdots\lshad\varrho(\bx)\cdot\dd_{\bx},a_1\rshad\ldots,a_{d-2}\rshad$   
are \textit{linear} in both the inverse density~$\varrho$ and Casimirs~$\ba=(a_1$,\ $\ldots$,\ $a_{d-2})$, the class $\{P(\varrho,[\ba])\}$ is stable if 
there exist the velocities $\dot{\varrho}$ and $\dot{a}$ (depending on the point $(\varrho,\ba)$ in the functional parameter space) such that the Leibniz rule for the time derivative $\dd/\dd t$ is verified by
\begin{equation}\label{EqLeibnizARho}
\dot{P}(\varrho,[\ba])=P(\dot{\varrho},[\ba]) + \sum\nolimits_{i=1}^{d-2}
  P(\varrho,[a_1],\ldots,[\dot{a}_i],\ldots,[a_{d-2}]).
\end{equation}
In particular, the stability of the class is achieved if the evolution~$\dot{\varrho}$ and~$\dot{\ba}$ is differential\/-\/polynomial (of finite degrees and differential orders) in the parameters that evolve,
\begin{equation} \label{EqARhoEvolve}
 \dot{\varrho}=\dot{\varrho}([\varrho],[\ba]), \qquad \dot{\ba}=\dot{\ba}([\varrho],[\ba]).
\end{equation}
The construction of the Kontsevich flow $\dot{P}=\Or(\gamma)(P^{\otimes^n})$ from a graph cocycle $\gamma$ on $n$ vertices and the count of homogeneities always allow us to estimate both the order and polynomial degrees of such (non)linear PDE evolution~--- provided it exists. 

\begin{example}[$\gamma_3$-flow over~$\BBR^3$]\label{ExR3G3}
First, if $\varrho\equiv1$ and the Poisson bi\/-\/vector is $P=\lshad \dd_x\wedge\dd_y\wedge\dd_z,a(x,y,z)\rshad
$, then the Kontsevich tetrahedral flow $\dot{P}=\Or(\gamma_3)(P^{\otimes^4}[a])$ vanishes identically. In retrospect, this is true because every term in $\dot{a}$ contains a derivative of~$\varrho$, and all the more each term in $\dot{\varrho}$ does so, whence the Cauchy datum $\varrho=\text{const}$ makes the flow well defined but identically zero.

Let the inverse density $\varrho(x,y,z)$ be not necessarily constant over~$\BBR^3$. A simple \textit{a priori} estimate of homogeneities suggests that the terms in the differential\/-\/polynomial right\/-\/hand side of $\dot{\varrho}$ and $\dot{a}$ are constrained by the ansatz
\begin{align}\label{EqG3ARhoDotAnsatz}
\begin{split}
    \dot{a}\sim a^4\varrho^3 &\text{, with 9 derivatives in each monomial,} \\
    &\text{at most 3rd order derivatives of $a$ and of $\varrho$;} \\
    \dot{\varrho}\sim a^3\varrho^4 &\text{, with 9 derivatives in each monomial,} \\
    &\text{at most 3rd order derivatives of $a$ and of $\varrho$.} \\
\end{split}
\end{align}
By using the method of undetermined coefficients, implementing the problem in software for differential calculus on jet spaces (e.g., \textsf{Jets} by M.\,Marvan~\cite{Jets} or \textsf{gcaops} by R.\,Buring~\cite{BuringDisser}), we obtain the nontrivial solution (see also Example~\ref{ExShortCutARhoDot} on p.~\pageref{ExShortCutARhoDot} below). The differential polynomial $\dot{a}([\varrho],[a])$ consists of 228~monomials with nonzero coefficients, and $\dot{\varrho}([\varrho],[a])$ is 426~monomial long. It is seen that the actual dependence of~$\dot{a}$ and~$\dot{\varrho}$ on the jet variables $a_{\sigma}$ and $\varrho_{\tau}$ is such that the lengths of multi-indices $\sigma$ and $\tau$ are bounded by $1\leqslant|\sigma|\leqslant3$ and $0\leqslant|\tau|\leqslant1$ for $\dot{a}$ and by $1\leqslant|\sigma|\leqslant2$ and $0\leqslant|\tau|\leqslant3$ for~$\dot{\varrho}$. Apparent is also that in each monomial, there are exactly three derivatives w.r.t.\ $x$, exactly three w.r.t.~$y$, and exactly three w.r.t.~$z$. Here is a sample how these formulas read:
\begin{align*}
   \dot{a}&=-12\varrho^2a_x\varrho_ya_{xy}a_{zz}a_{xyz}+12\varrho^2a_x\varrho_ya_{xy}a_{xz}a_{yzz}+12\varrho^2a_x\varrho_ya_{xy}a_{xzz}a_{yz}+\ldots, \\
   \dot{\varrho}&=-12\varrho\varrho_x\varrho_ya_xa_z\varrho_{xxy}a_{zz}-12\varrho\varrho_x\varrho_ya_xa_z\varrho_{xzz}a_{yy}+24\varrho\varrho_x\varrho_ya_xa_z\varrho_{xyz}a_{yz}+\ldots;
\end{align*}
both formulas are given in full in Appendix~\ref{AppARhoDotG3}. In what follows, we shall explain these empiric facts; by understanding the combinatorics in these formulas, we collapse them to tiny Eqs.~\eqref{EqG3ARhoDotCollapsed} on p.~\pageref{EqG3ARhoDotCollapsed}.
\end{example}

\begin{example}[$\gamma_3$-flow over~$\BBR^4$]\label{ExG3R4}
In contrast with~3D, the tetrahedral flow $\dot{P}=\Or(\gamma_3)\linebreak[1](P^{\otimes^4})$ is nonzero for the ``authentic" Nambu\/-\/determinant Poisson bi\/-\/vector $P=\lshad\lshad \dd_x\wedge\dd_y\wedge\dd_z\wedge\dd_w,a_1\rshad,a_2\rshad
$ with pre\/-\/factor $\varrho\equiv1$ on~$\BBR^4\ni\bx=(x,y,z,w)$. With this Cauchy datum $\varrho\equiv1$ implying $\dot{\varrho}\equiv0$, we obtain that the differential polynomial velocities $\dot{a}_1([a_1],[a_2])$ and $\dot{a}_2([a_1],[a_2])$ each contain 9024 monomials (of two unequal differential profiles, 4512 and 4512~each, 
see Example~\ref{ExG3R4Rho1ThreeCivitas}).

Now the full case on~$\BBR^4$: take the generalized Nambu\/--\/Poisson bi\/-\/vector $P(\varrho,[a_1],[a_2])\linebreak[1]=
\lshad\lshad \varrho(\bx)\cdot\dd_x\wedge\dd_y\wedge\dd_z\wedge\dd_w,a_1\rshad,a_2\rshad
$. The tetrahedral flow $\dot{P}=\Or(\gamma_3)(P^{\otimes^4})$ does preserve this class of Poisson brackets: there exist differential\/-\/polynomial velocities of the inverse density $\varrho(\bx)$ and of the two Casimirs~$a_1$,\ $a_2$ 
such that\footnote{See the file \texttt{https://rburing.nl/gcaops/adot\_rhodot\_g3\_4D.txt}} 
\begin{align*}
    &\dot{\varrho}=\dot{\varrho}([\varrho],[a_1],[a_2]) \text{ with 90,024 terms,} \\
    &\dot{a}_1,\dot{a}_2([\varrho],[a_1],[a_2]) \text{ with 33,084 terms each.}
\end{align*}
The combinatorial structure of these right\/-\/hand sides in the general case ($\varrho\not\equiv1$) can be analysed by the technique which we develop in what follows: each of the three expressions is collapsed by using the marker\/-\/monomials for the triple summation with the Civita symbols on~$\BBR^4$. For instance, the differential monomials in either $\dot{a}_1$ or $\dot{a}_2$ are partitioned according to the homogeneity profiles of derivatives of $\varrho$, $a_1$, and $a_2$ with respect to the four coordinates on~$\BBR^4$ (see Table~\ref{TabG3R4GenRhoCount} on p.~\pageref{TabG3R4GenRhoCount} below). And all the 33,048 terms in $\dot{a}_1$ and $\dot{a}_2$ are expressed by formulas~\eqref{EqA1dotA2dotG3R4GenRho} on p.~\pageref{EqA1dotA2dotG3R4GenRho}.
\end{example}

\begin{example}[$\gamma_5$-flow over~$\BBR^3$]\label{ExG5R3}
The pentagon\/-\/wheel flow $P=\Or(\gamma_5)(P^{\otimes^6})$ on the space of all Poisson structures on~$\BBR^3$ restricts to the Nambu\/-\/determinant class of brackets $\{P(\varrho,[a])\}$. In the differential\/-\/polynomial formulas of evolution $\dot{\varrho}([\varrho],[a])$ and $\dot{a}([\varrho],[a])$, the right\/-\/hand side of $\dot{a}$ contains 79,212 monomials, and there are as many as 146,340 in $\dot{\varrho}$ (before either formula is collapsed by using five Civita symbols). 
Both these formulas of~$\dot{\varrho}$ and~$\dot{a}$ are stored externally in the file
\begin{quote}
\verb"https://rburing.nl/gcaops/adot_rhodot_g5_3D.txt"
\end{quote}
In the meantime, one can estimate the homogeneity degrees and orders, that is the polynomial degrees of each term in $\dot{a}$ and $\dot{\varrho}$ with respect to the jet variables $a_{\sigma}$ and $\varrho_{\tau}$, as well as the bounds on the possible (but not necessarily attained) lengths of the multi\/-\/indices $\sigma$ and $\tau$ counting the derivatives. We note that in every monomial, there are 5~subscripts $x$ (for derivatives, which is the usual notation), 5~subscripts $y$, and 5~subscripts $z$; both $\dot{a}$ and $\dot{\varrho}$ are manifestly skew\/-\/symmetric w.r.t.\ permutations of the base variables $x,y,z$ (meaning that the right\/-\/hand sides contain at least one Civita symbol~$\veps^{i_1i_2i_3}$).
\end{example}

\section{The structure of induced evolution $\dot{\varrho}$, $\dot{\ba}$} \label{SecHowEvolve}
\subsection{Encoding $\dot{\ba}$, $\dot{\varrho}$ by the Kontsevich graphs} \label{SecDotG}
The Kontsevich flow $\dot{P}=\Or(\gamma)\linebreak[1](P^{\otimes^n})$ on the class of generalized ($\varrho\not\equiv1$) Nambu\/-\/determinant Poisson brackets $P(\varrho,[\ba])$ preserves their structure. Let us interpret this fact back in the language of Kontsevich's directed graphs.

\begin{proposition}[\cite{MKConjecture13Dec2019adot}]\label{PropMKadotGraphs}
The evolution $\dot{a_i}$ of each Casimir in the Jacobian determinant within the Nambu\/--\/Poisson bracket, 
\begin{equation}
    \{f,h\}_{P(\varrho,[\ba])}=\varrho(\bx)\cdot\det\bigl\lVert\partial(a_1,\dots,a_{d-2},f,g)/\partial(x^1,\dots,x^d)\bigr\rVert, \tag{\ref{EqPBrNambu}}
\end{equation}
is equal to the value of the graph orientation morphism~$\Or$ at the $n$-\/tuple $P^{\otimes^{n-1}}\otimes a_i$ (here $n=\#\text{Vert}(\gamma)$):
\begin{equation} \label{EqSpeedCasimir}
    \dot{a_i}=\Or(\gamma)(P^{\otimes^{n-1}} \otimes a_i),
\end{equation}
where the right\/-\/hand side represents the sum of $n$-\/linear polydifferential operators which are encoded by the directed graph cocycle $\Or(\gamma)$ and which are evaluated at~$a_i$ placed consecutively in one of the vertices and the other vertices filled in by copies of the bi\/-\/vector~$P(\varrho,[\ba])$.
\end{proposition}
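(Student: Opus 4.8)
The plan is to reduce the claim to the Leibniz decomposition~\eqref{EqLeibnizARho} and then to match, term by term, the $a_i$-part of the velocity $\dot P=\Or(\gamma)(P^{\otimes^{n}})$ against the graph obtained by promoting one copy of $P$ to the bare Casimir $a_i$. First I would invoke the structural stability established in~\S\ref{SecStability}: since the flow preserves the Nambu class and $P(\varrho,[\ba])$ is \emph{linear} in each of $\varrho,a_1,\dots,a_{d-2}$, the velocities $\dot\varrho,\dot a_1,\dots,\dot a_{d-2}$ exist and are uniquely determined as differential polynomials by~\eqref{EqLeibnizARho}. Indeed, the summand $P(\dot\varrho,[\ba])$ carries a high-order scalar prefactor but \emph{all} gradient columns bare, whereas $P(\varrho,[a_1],\dots,[\dot a_i],\dots,[a_{d-2}])$ carries a bare prefactor $\varrho$ and a single high-order $i$-th column $\dd(\dot a_i)$; these ``profiles'' are linearly independent as operators on free jets, so no cancellation between them occurs and each $\dot a_i$ is read off unambiguously. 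It therefore suffices to produce, for the candidate $\dot a_i:=\Or(\gamma)(P^{\otimes^{n-1}}\otimes a_i)$, the matching Nambu summand in $\dot P$.

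Next I would substitute the explicit components $P^{\mu_v\nu_v}_v=\varrho_v\sum\veps^{p_1\cdots p_{d-2}\mu_v\nu_v}\,\dd_{p_1}a_1\cdots\dd_{p_{d-2}}a_{d-2}$ of~\eqref{EqBiVectNambu} into each of the $n$ aerial vertices of the oriented graph $\Or(\gamma)$ and distribute every edge-derivative over the product $\varrho\cdot\prod_l\dd a_l$ by the Leibniz rule. Since each vertex contributes exactly one gradient $\dd a_i$, the total $a_i$-degree of $\dot P$ is $n$, matching that of $\Or(\gamma)(P^{\otimes^{n-1}}\otimes a_i)$, namely $n-1$ gradients from the surviving copies of $P$ together with the one bare $a_i$. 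The heart of the identification is the following ``peeling'' observation: in the Nambu summand $\varrho\sum\veps^{p_1\cdots p_{d-2}\mu\nu}\,\dd_{p_1}a_1\cdots\dd_{p_i}(\dot a_i)\cdots$, the two output legs $\mu,\nu$ and a single overall Civita symbol are furnished by \emph{one} backbone copy of the $P$-skeleton, whose function slot has been peeled from $a_i$ to $\dot a_i$; what fills that slot is assembled by the remaining graph acting on the surviving copies of $P$ and on the peeled bare $a_i$, i.e.\ precisely $\Or(\gamma)(P^{\otimes^{n-1}}\otimes a_i)$. Comparing this with the output of~\eqref{EqLeibnizARho} then yields $\dot a_i=\Or(\gamma)(P^{\otimes^{n-1}}\otimes a_i)$.

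As an independent consistency relation I would differentiate the Casimir identity $\{a_i,h\}_{P}\equiv0$ along the flow, obtaining $\{a_i,h\}_{\dot P}=-\{\dot a_i,h\}_{P}$ for every test function $h$. The left-hand side is the bi-vector $\Or(\gamma)(P^{\otimes^{n}})$ contracted against $\Id a_i$ and $\Id h$, i.e.\ $a_i$ inserted into an \emph{output leg}; this ties the output-leg insertion of $a_i$ to the vertex insertion appearing in the claim and fixes $\dot a_i$ up to a Casimir, the residual ambiguity being removed by the uniqueness of the first paragraph.

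The step I expect to be the main obstacle is making the peeling rigorous, because the orientation morphism $\Or$ need not place both output legs $\mu,\nu$ on a single graph vertex --- for instance the component $\Gamma_2'$ of the tetrahedral flow emits its two legs from two different vertices --- whereas the Nambu backbone carries $\mu,\nu$ on one Civita symbol. Reconciling these requires collapsing the product of the $n$ per-vertex Civita symbols into a single one and controlling the induced signs and multiplicities, which is exactly the marker-monomial bookkeeping developed in~\S\ref{SecMarkers}. The delicate point is to show that, after this collapse, every ``cross'' contribution in which an edge-derivative lands on $\varrho$ or on a gradient $\dd a_l$ with $l\neq i$ is correctly reassigned either to $\dot\varrho$ or to another $\dot a_l$, so that the $a_i$-profile of $\dot P$ agrees \emph{on the nose} with the single-Civita form of $\Or(\gamma)(P^{\otimes^{n-1}}\otimes a_i)$.
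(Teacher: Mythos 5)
Your overall frame --- the Leibniz decomposition~\eqref{EqLeibnizARho} together with a term-by-term identification inside the graph expansion --- is the same as in the paper's commentary-proof, but your argument has a genuine gap at its central step, and you have in fact located it yourself. The ``peeling'' observation is precisely where all the content of the proposition lies, and it fails as stated: for a graph such as $\Gamma_2'$ of Fig.~\ref{FigTetraFlow} the two output legs of $\Or(\gamma)(P^{\otimes^n})$ sit on two \emph{different} vertices, so no single vertex furnishes the one overt Civita symbol $\veps^{p_1\cdots p_{d-2}\mu\nu}$ of the Nambu backbone; and even for $\Gamma_1$-type terms the backbone index $p_i$ in the target $\varrho\,\veps^{\cdots p_i\cdots\mu\nu}\,\partial_{p_i}(\dot a_i)$ differentiates the \emph{whole} composite $\dot a_i$ by the Leibniz rule, whereas in the graph expansion that index sits only on the one copy of $a_i$ inside the distinguished vertex. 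Reconciling the two requires Civita-contraction identities and a resummation of cross terms which you do not carry out, and deferring it to the ``marker-monomial bookkeeping of \S\ref{SecMarkers}'' is misdirected: that section \emph{presupposes} the velocities (obtained exactly via Proposition~\ref{PropMKadotGraphs} and Corollary~\ref{CorRhoDotGraphs}) and merely re-encodes them compactly afterwards; it contains no reassignment mechanism. The paper's own proof never attempts the peeling: it selects the relevant terms of the vertex-wise Leibniz expansion by the two conditions (pre-factor $\varrho$ intact; incoming derivatives not spread over several Casimirs within a vertex), openly declares the assembly of precisely all such terms into well-defined velocities to be the nontrivial claim, and settles the matter by direct verification (cf.\ Example~\ref{ExShortCutARhoDot}), with $\dot\varrho$ then produced by the exact division of Corollary~\ref{CorRhoDotGraphs}.

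A second, compounding flaw is your uniqueness claim, on which your third paragraph also leans: the decomposition~\eqref{EqLeibnizARho} is \emph{not} unique, so the two ``profiles'' are not linearly independent. For any function $F(\ba)$ of the Casimirs, the pair $\delta a_i = F(\ba)$, $\delta\varrho = -\varrho\,\partial F/\partial a_i$ lies in the kernel of $(\delta\varrho,\delta\ba)\mapsto\delta P$: by antisymmetry of the Civita contraction only the summand $(\partial F/\partial a_i)\,\partial a_i$ of $\partial(\delta a_i)$ survives, and it is reabsorbed as a scalar multiple of~$P$, i.e.\ $P(\varrho,\ldots,[F(\ba)],\ldots) = P(\varrho\,\partial F/\partial a_i,[\ba])$. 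This residual freedom is exactly a shift of $\dot a_i$ by a Casimir, so your consistency check (which, as you say, fixes $\dot a_i$ only up to a Casimir) cannot be closed by appealing to that uniqueness; singling out formula~\eqref{EqSpeedCasimir} requires exhibiting the decomposition itself --- the very step that remains open in your proposal.
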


\begin{example}
For the tetrahedron~$\gamma_3$, which is symmetric w.r.t.\ each vertex of the full graph, we have that (let $d=3$ and denote the Casimir~$a_1$ by~$a$)
\begin{multline*}
\dot{a} = \Or(\gamma_3)(P\otimes P\otimes P\otimes a) \\
{}= \Or(\gamma_3)(a,P,P,P) + \Or(\gamma_3)(P,a,P,P) + \Or(\gamma_3)(P,P,a,P) + \Or(\gamma_3)(P,P,P,a) \\
{}= 4\,\Or(\gamma_3)(P,P,P,a),
\end{multline*}
as soon as the four vertices of~$\gamma_3$ are labelled and the list in the above formula, elements of which are separated by commas, indicates which argument is placed in which vertex of the graph.
\end{example}

\begin{proof}[Commentary]
Indeed, the Kontsevich graph flows $\dot{P}=\Or(\gamma)(P^{\otimes^n})$ are such that no arrows fall on the checked factors~$\check{\varrho}$ and~$\check{a}_i$ in the Leibniz formula for~$\dot{P}$, 
\begin{equation}\tag{\ref{EqLeibnizARho}} 
\dot{P}([\varrho],[\ba])= P(\dot{\varrho},[\check{\ba}]) + \sum\nolimits_{i=1}^{d-2}P(\check{\varrho},[\check{a}_1],\ldots, [\dot{a}_i],\ldots,[\check{a}_{d-2}]).
\end{equation} 
More specifically, to let exist the restriction of Kontsevich's graph flow to the class of Nambu\/-\/determinant Poisson 
bi\/-\/vectors~\eqref{EqBiVectNambu},
the directed graph formula, working over the content of each internal vertex by the Leibniz rule for each in-coming arrow, automatically singles out the terms in which (\textit{i}) the pre-factor $\varrho$ remains intact and (\textit{ii}) the in-coming derivatives are not spread over several Casimirs in the Jacobian inside that vertex. Nontrivial in this claim is that precisely all --\,without exception\,-- terms of such structure do form the well defined tuple of velocities~$\dot{\ba}$.
\end{proof}

\begin{cor} \label{CorRhoDotGraphs}
As soon as the evolution $\dot{\ba}$ of the Casimirs is obtained according to formula \eqref{EqSpeedCasimir}, 
from the structure~\eqref{EqPBrNambu} 
of the Nambu bracket and from the Leibniz rule in Eq.~\eqref{EqLeibnizARho} we deduce the speed of evolution for the inverse density~$\varrho$. Namely, we have that 
\begin{equation}\label{EqSpeedRho}
    \dot{\varrho}\cdot\left|\frac{\partial(a_1,\dots,a_{d-2},f,g)}{\partial(x^1,\dots,x^d)}\right|= 
    \bigg(\dot{P}([\varrho],[\ba])-\sum_{i=1}^{d-2}P(\varrho,[a_1],\dots,[\dot{a}_i],\dots,[a_{d-2}])\bigg)(f,g),
\end{equation}
where $f,g\in C^{\infty}(\BBR^d)$, the right\/-\/hand side with the known flow~$\dot{P}=\Or(\gamma)(P^{\otimes^n})$ is the value of the linear combination of bi\/-\/vectors at $f\otimes g$, and $\dot{\varrho}$ is extracted from the left\/-\/hand side by division.
\end{cor}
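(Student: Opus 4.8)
The plan is to read off \eqref{EqSpeedRho} as a mere rearrangement of the Leibniz identity \eqref{EqLeibnizARho}, powered only by the linearity of the Nambu bracket \eqref{EqPBrNambu} in its inverse density. First I would invoke the structural stability of the Nambu class under the flow (established in \textsection\ref{SecStability}): this is precisely the statement that the velocity $\dot{P}=\Or(\gamma)(P^{\otimes^n})$ admits the decomposition \eqref{EqLeibnizARho}, i.e.\ that after subtracting the $d-2$ Casimir\/-\/evolution contributions one is left with a bi\/-\/vector of Nambu shape carrying the \emph{unchanged} Casimirs $\ba$ and some inverse density, which we name $\dot{\varrho}$. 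Solving \eqref{EqLeibnizARho} for that single term, with $\dot{\ba}$ taken as already known from \eqref{EqSpeedCasimir}, gives
\[
P(\dot{\varrho},[\ba]) = \dot{P}([\varrho],[\ba]) - \sum_{i=1}^{d-2} P(\varrho,[a_1],\dots,[\dot{a}_i],\dots,[a_{d-2}]).
\]

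Next I would evaluate both sides on a pair of test functions $f,g \in C^{\infty}(\BBR^d)$. The key observation is that the Nambu bracket \eqref{EqPBrNambu} is linear in the inverse density, so replacing $\varrho$ by $\dot{\varrho}$ merely rescales the Jacobian determinant:
\[
P(\dot{\varrho},[\ba])(f,g) = \dot{\varrho}(\bx)\cdot\left|\frac{\partial(a_1,\dots,a_{d-2},f,g)}{\partial(x^1,\dots,x^d)}\right|.
\]
Substituting this into the previous display yields \eqref{EqSpeedRho} verbatim; no further computation is needed for the identity itself.

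The remaining, and only genuinely delicate, step is the extraction of $\dot{\varrho}$ by division. Here I would establish well\/-\/definedness: the left\/-\/hand side of \eqref{EqSpeedRho} factors as $\dot{\varrho}(\bx)$ --- which does not depend on the test pair --- times the Jacobian, so the quotient is independent of $f,g$ wherever that Jacobian is nonzero. Concretely, setting $f=x^k$ and $g=x^l$ turns \eqref{EqSpeedRho} into the component identity
\[
\Bigl(\dot{P}-\sum_{i=1}^{d-2}P(\varrho,[a_1],\dots,[\dot{a}_i],\dots,[a_{d-2}])\Bigr)^{kl} = \dot{\varrho}\cdot\left|\frac{\partial(a_1,\dots,a_{d-2},x^k,x^l)}{\partial(x^1,\dots,x^d)}\right|,
\]
and since the Nambu bi\/-\/vector has rank two on the open set where $P\not\equiv0$, at least one such coordinate determinant is nonzero there, which permits the division. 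The point to emphasize is that consistency across the different admissible index pairs $(k,l)$ is not an extra assumption but is forced by the stability result of \textsection\ref{SecStability}, which guarantees the right\/-\/hand side is already of genuine Nambu shape with the unchanged Casimirs $\ba$; on the lower\/-\/dimensional locus where every such determinant vanishes, $\dot{\varrho}$ is recovered by continuity from the differentiable data. This soft well\/-\/definedness check is the main obstacle, and it dissolves once stability is in hand.
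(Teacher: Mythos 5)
Your proof is correct and follows essentially the same route as the paper's: invoke the stability/Leibniz decomposition \eqref{EqLeibnizARho} with $\dot{\ba}$ already known from \eqref{EqSpeedCasimir}, rearrange to isolate $P(\dot{\varrho},[\ba])$, use linearity of the Nambu bracket in the inverse density to identify its value at $f\otimes g$ as $\dot{\varrho}$ times the Jacobian, and extract $\dot{\varrho}$ by division. The only (harmless) difference is in how the division is justified: the paper states it algebraically --- the right\/-\/hand side is a \emph{whole multiple} of the Jacobian determinant, which equals $P(\varrho\equiv1,[\ba])(f,g)$, so the quotient is itself a well\/-\/defined differential polynomial --- whereas your pointwise argument via coordinate pairs $(x^k,x^l)$ and continuity elaborates the same fact.
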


\begin{proof} [Commentary]
Indeed, by the above, the right\/-\/hand side is a whole multiple of the Jacobian determinant, which itself is equal to $P(\varrho\equiv1,[\ba])(f,g)$.
\end{proof}

\begin{example} \label{ExShortCutARhoDot}
The above proposition and corollary, resulting in the explicit differential\/-\/polynomial expressions for the velocities $\dot{\varrho}$ and $\dot{\ba}$ that induce a given graph flow $\dot{P}=\Or(\gamma)(P^{\otimes^n})$ for 
the Nambu structures $\lshad\cdots\lshad \varrho(\bx)\,\dd_{\bx},\cdots\ba\cdots\rshad\cdots\rshad$
on~$\BBR^d$, are illustrated 
in~\cite[Ch.\,6]{BuringDisser} by using the \textsf{gcaops} software for differential calculus on jet spaces. So far, the graph formulas are explicitly verified for
\begin{itemize}
\item the tetrahedral flow ($\gamma=\gamma_3$) on~$\BBR^3$;
\item the tetrahedral flow ($\gamma=\gamma_3$) on~$\BBR^4\ni\bx$ with $\varrho\equiv1$ (special case) and generic $\varrho(\bx)$ which implies $\dot{\varrho}\not\equiv0$;
\item the pentagon\/-\/wheel flow ($\gamma=\gamma_5$) on~$\BBR^3$ with generic $\varrho$.
\end{itemize}
For the tetrahedral $\gamma_3$-flow on~$\BBR^3$, the findings from Example~\ref{ExR3G3} are reproduced identically. A naive attempt to use the method of undetermined coefficients would be practically unfeasible in the other three cases, yet Eqs.~\eqref{EqSpeedCasimir} and~\eqref{EqSpeedRho} serve the correct formulas of~$\dot{\ba}$ and~$\dot{\varrho}$ without any need to solve a linear algebraic system.
\end{example}

\subsection{Civita symbols in $\dot{\varrho}$,\ $\dot{\ba}$: collapsing the formulas} \label{SecCivitas}
Our present task is to analyze the combinatorial structure of the differential\/-\/polynomial expressions for $\dot{\ba}$ and $\dot{\varrho}$ in formulas \eqref{EqSpeedCasimir} and \eqref{EqSpeedRho}, respectively, and collapse them as much as possible by using this new knowledge.

\subsubsection{The determinant provides one Civita symbol} \label{SecOneCivitaDet}
One simple fact is immediate from the presence of Jacobian determinant in the Nambu brackets.

\begin{proposition} \label{PropOnceSkew}
The differential polynomials $\dot{\ba}([\varrho],[\ba])$ and $\dot{\varrho}([\varrho],[\ba])$ are shifted skew\/-\/symmetric w.r.t.\ permutations of the base variables $x^1$,\ 
$\ldots$,\ $x^d$ (i.e.\ coordinates on the Poisson manifold~$\BBR^d$): for a graph cocycle $\gamma$ on $n$~vertices in each term, the velocities~$\dot{\varrho}$ and~$\dot{a_i}$ are skew\/-\/symmetric in $x^1$,\ $\ldots$,\ $x^d$ if $n$~is even (e.g., as for $\gamma_3$,\ $\gamma_5$,\ $\gamma_7$,\ $\ldots$,\ $\gamma_{2\ell+1}$,\ $\ldots$) and symmetric in $x^1$,\ $\ldots$,\ $x^d$ if $n$~is odd (e.g., such is the case for the cocycle $[\gamma_3,\gamma_5]$ on 9~vertices and 16~edges).
\end{proposition}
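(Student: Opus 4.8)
The plan is to reduce the claimed (anti)symmetry to a parity count of Levi--Civita symbols, exploiting that each Nambu bi-vector $P$ in~\eqref{EqBiVectNambu} carries exactly one symbol $\veps$. First I would fix a permutation $\sigma\in\BBS_d$ of the base coordinates $x^1,\ldots,x^d$ and record its effect on the building blocks: the Casimirs $a_j$ are scalars and keep their values, their derivatives merely have their subscripts relabelled by $\sigma$, whereas each $\veps^{i_1\cdots i_d}$ is multiplied by $\sgn(\sigma)$. By Proposition~\ref{PropMKadotGraphs} the velocity is $\dot a_i=\Or(\gamma)(P^{\otimes(n-1)}\otimes a_i)$ with $n=\#\mathrm{Vert}(\gamma)$; it is therefore assembled from exactly $n-1$ copies of $P$ and hence from $n-1$ Civita symbols, so that $\sigma$ multiplies the whole differential polynomial by $\sgn(\sigma)^{n-1}$. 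For $\dot\varrho$ I would use Corollary~\ref{CorRhoDotGraphs}: the right-hand side of~\eqref{EqSpeedRho} is $\dot P=\Or(\gamma)(P^{\otimes n})$ (carrying $n$ symbols) minus the Leibniz terms $P(\varrho,\ldots,\dot a_i,\ldots)$ (each carrying $1+(n-1)=n$ symbols), while the Jacobian determinant on the left-hand side is precisely the single-$\veps$ factor $P(\varrho\equiv1,[\ba])(f,g)$; dividing by it leaves $\dot\varrho$ with $n-1$ Civita symbols as well. Since $\sgn(\sigma)^{n-1}=\sgn(\sigma)$ when $n$ is even and $=1$ when $n$ is odd, the stated shifted skew-symmetry falls out immediately.

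The one point that needs care --- and which I regard as the main obstacle --- is that the number of Civita symbols is \emph{not} an invariant of the expression: the identity $\veps\cdot\veps=\det\lVert\delta\rVert$ rewrites a product of two symbols in terms of Kronecker deltas and so can lower the apparent count. The clean way around this is to observe that every application of the $\veps$--$\delta$ identity removes exactly two symbols, hence preserves the count modulo $2$; equivalently, the scalar $\sgn(\sigma)^{n-1}$ is an intrinsic eigenvalue of the permutation action and does not depend on how the differential polynomial is presented. I would therefore phrase the argument from the start in terms of this eigenvalue rather than a literal symbol count.

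To make the bookkeeping rigorous I would run the same computation covariantly, which also pins down the signs. Universality of the graph flow under affine changes (here, permutations) means that $\dot a_i$ and $\dot\varrho$ are given by the \emph{same} differential polynomials $F$ and $G$ in every Cartesian chart; $\dot a_i$ is a genuine scalar and $\dot\varrho$ a genuine density, transforming by~\eqref{EqChangeRho}. Each monomial of $F$ is homogeneous of degree $n-1$ in $\varrho$ (one factor per copy of $P$), and each monomial of $G$ is homogeneous of degree $n$; substituting $\tilde a_j=a_{\sigma^{-1}(\cdot)}$ and $\tilde\varrho=\sgn(\sigma)\,\varrho_{\sigma^{-1}(\cdot)}$ and equating the two chart expressions then gives $\sigma\cdot F=\sgn(\sigma)^{\,n-1}F$ for the scalar and, after the extra density factor $\sgn(\sigma)$ is absorbed, $\sigma\cdot G=\sgn(\sigma)^{\,n-1}G$ as well. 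This reproduces the parity count, confirms that $\varrho$ enters only through the $\veps$-carrying combination, and completes the proof for all cocycles $\gamma_3,\gamma_5,\gamma_7,\ldots$ (with $n$ even) and for odd-vertex commutators such as $[\gamma_3,\gamma_5]$.
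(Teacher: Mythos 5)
Your proposal is correct and, in its decisive final paragraph, is essentially the paper's own proof: both arguments combine the facts that $\dot{a}_i$ is a scalar while $\dot{\varrho}$ obeys the same signed law $\dot{\varrho}\bigr|_{\bx}=(-)^{\sigma}\dot{\varrho}'\bigr|_{\bX'(\bx)}$ as $\varrho$ itself, with the homogeneity count ($\varrho^{n-1}$ in every monomial of $\dot{a}_i$, $\varrho^{n}$ in every monomial of $\dot{\varrho}$) and the universality of the graph formulas under affine (here: permutation) changes, to read off the parity $((-)^{\sigma})^{n-1}$. Your opening Civita-symbol count, together with your correct caveat that such a count is presentation-dependent (well defined only modulo~$2$ because of the $\veps$--$\delta$ identity) and must be replaced by the intrinsic eigenvalue of the permutation action, is an extra heuristic that the paper defers to the stronger statement of Theorem~\ref{ThCivitas}; it is not needed once the covariant argument is in place.
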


\begin{proof}
Every Poisson bracket of two scalar functions itself is a scalar function. For the Nambu bracket in particular,
\[
    \{f,g\}_{P(\varrho,[\ba])}=\varrho(\bx)\cdot\det\bigl\lVert\partial(a_1,\ldots,a_{d-2},f,g)/\partial(x^1,\ldots,x^d)\bigr\rVert,
\]
this invariance is provided by response~\eqref{EqChangeRho} of the inverse density $\varrho$ to a permutation $\sigma$ of rows in the Jacobian determinant: $\varrho(\bx)=(-)^{\sigma}\cdot\varrho'(\bX'(\bx))$ if $\bx=\sigma(\bX')$. A simple count shows that for a graph cocycle $\gamma$ on $n$ vertices (and $2n-2$ edges), 
in the respective differential polynomial of velocity~$\dot{\varrho}$ and~$\dot{\ba}$, each differential monomial contains the same number of jet variables for~$\varrho$ and~$a_i$, and the union of multi\/-\/indices from the derivatives in each monomial is always the same. Namely, we have that up to real coefficients,
\begin{align*}
    \dot{\varrho}&=\sum \Bigl( \text{terms} \sim \varrho^n\cdot a_1^{n-1}\cdot\ldots\cdot a_{d-2}^{n-1} &\text{with} (n-1)\times d \text{ base variables } x^1,\ldots,x^d \Bigr); \\
    \dot{a_i}&=\sum \Bigl( \text{terms} \sim \varrho^{n-1}\cdot a_i^n\cdot\mathop{{\prod}'}_{j\neq i}a_j^{n-1} &\text{with } (n-1)\times d \text{ base variables } x^1,\ldots,x^d \Bigr).
\end{align*}
For the velocities $\dot{a_i}$ to be scalars and for the objects $\dot{\varrho}$ to behave according to the same law, $\dot{\varrho}\bigr|_{\bx}=(-)^{\sigma}\dot{\varrho}'\bigr|_{\bX'(\bx)}$, as the inverse density $\varrho$ satisfies, both the right\/-\/hand sides have the parity $((-)^{\sigma})^{n-1}$ whenever the base variables are permuted: $\bx=\sigma(\bX')$.
\end{proof}

\begin{example}[$\gamma_3$-flow over~$\BBR^3$]\label{ExOnceSkewG3}
Indeed, for the tetrahedral $\gamma_3$-flow on the space of Nambu\/--\/Poisson structures $P(\varrho,[a])$ on~$\BBR^3$, with 228 terms in $\dot{a}$ and 426 terms in $\dot{\varrho}$, we verify that
\begin{align*}
    \dot{a}([\varrho],[a])(x,y,z)&=\sum\nolimits_{\sigma\in \BBS_3}(-)^{\sigma}\sigma(x,y,z)\text{ acts on (sum of $38$ terms)}, \\
    \dot{\varrho}([\varrho],[a])(x,y,z)&=\sum\nolimits_{\sigma\in \BBS_3}(-)^{\sigma}\sigma(x,y,z)\text{ acts on (sum of $71$ terms)}.
\end{align*}
The differential monomials in the right\/-\/hand sides are obtained by the greedy algorithm: for a monomial that still remains in the expression to be represented as an alternating sum, take its skew\/-\/symmetrization w.r.t.\ $\sigma\in \BBS_3$ acting on $x,y,z$, subtract it from the expression, collect similar terms and reduce, then proceed recursively until the list of monomials, initially met in the velocity, is empty.
\end{example}

We shall presently recognize such one\/-\/time skew\/-symmetrizations (when $n$ is even) within~$\dot{\varrho}$ and~$\dot{a_i}$ as a consequence of a much stronger claim about the independent action of $n-1$ copies of the permutation group~$\BBS_d$ on the 
$d$-\/tuples $\{x^1$,\ $\ldots$,\ $x^d\}_k$ for $1\leqslant k\leqslant n-1$ in the 
right\/-\/hand sides~$\dot{\varrho}$ and~$\dot{\ba}$. For instance, in the above example (here $n=4$ and $d=3$) the sign factor $(-)^{\sigma}$ is produced by the restriction on the diagonal, 
\[
\sum_{\sigma\in \BBS_3}(-)^{\sigma}\sigma(\bigotimes_{k=1}^{n-1}\{x,y,z\}_k)=\sum_{\sigma_1,\ldots,\sigma_{n-1}\in \BBS_3}(-)^{\sigma_1}\cdots(-)^{\sigma_{n-1}}\bigotimes_{k=1}^{n-1}\sigma_k(\{x,y,z\}_k)\Bigr|_{\sigma_k=\sigma},
\]
in the set of $n-1=3$ permutations $\sigma_k\in \BBS_d$ acting on the $n-1$ non-intersecting $d$-tuples $\{x^1$,\ $\ldots$,\ $x^d\}_k$ that partition the set of $(n-1)\times d$ derivatives occurring in the right\/-\/hand sides of $\dot{\varrho}$ and~$\dot{a}$.

\subsubsection{How $\varrho^k$ yields $k$ Civita symbols, or\textup{:} Jacobians generalized} \label{SecGenJacobians}
Let us recall three facts from analysis:
\begin{itemize}
\item the Casimirs $\ba=(a_1,\dots,a_{d-2})$ of the Nambu\/--\/Poisson brackets are scalar functions;
\item the inverse density~$\varrho$ obeys the transformation law $\varrho(\bx)=\varrho'(\bX')\cdot\det\lVert\partial\bx/\partial\bX'\rVert\bigr|_{\bX'(\bx)}$ under a change $\bx(\bX')\rightleftarrows\bX'(\bx)$;
\item the objects' velocities inherit the behaviour of those objects under 
coordinate transformations.
\end{itemize}
Consider a Kontsevich flow $\dot{P}=\Or(\gamma)(P^{\otimes^{n}})$ associated with a graph cocycle~$\gamma$ on~$n$ vertices. These three facts, put together, reveal that the reparametrization of derivatives of $\ba$ and $\varrho$ in the differential monomials within $\dot{\varrho}([\varrho],[\ba])$ and~$\dot{\ba}([\varrho],[\ba])$ match the nontrivial reparametrization of~$n-1$ copies of~$\varrho$ therein. 

More specifically, the $(n-1)\times d$ derivations,
namely $n-1$ copies of~$\dd_{x^i}$ for $1\leqslant i \leqslant d$,
arrange into $n-1$ totally skew\/-\/symmetric $d$-\/tuples 
$\veps^{i_1^1\dotsm i_d^1}\, \partial_{x^{i^1_1}}\otimes\dotsm\otimes \partial_{x^{i^1_d}}$,\ $\ldots$,\ $\veps^{i_1^{n-1}\dotsm i_d^{n-1}}\, \partial_{x^{i^{n-1}_1}}\otimes\dotsm\otimes \partial_{x^{i^{n-1}_d}}$, 
where $\veps^{\vec{\imath}^{\,\alpha}}$~is the Civita symbol on~$\BBR^d$. The derivatives from each $d$-tuple act on different comultiples of a \textit{marker\/-\/monomial} (which stands under the sum over the $n-1$ tuples $\vec{\imath}^{\,1},\ldots,\vec{\imath}^{\,\,n-1}$ with $d$ indices in each tuple and which thus marks, generally speaking, many differential monomials in the differential polynomial expressions $\dot{\varrho}$ and $\dot{a}_{\ell}$ when the sums over $\vec{\imath}^{\,\,\alpha}$ are expanded). In effect, each of the $d$-tuples $\partial_{x^1}\wedge\ldots\wedge\partial_{x^d}$ provides its own Jacobian determinant $\det\lVert\partial\bX'/\partial\bx\rVert$ when the coordinates are reparametrized on the affine base manifold~$\BBR^d$. These $n-1$ Jacobians $|\partial\bX'/\partial\bx|$ cancel against the $n-1$ Jacobians $|\partial\bx/\partial\bX'|$ from the reparametrizations of the inverse density $\varrho$ in either~$\dot{\varrho}$ or~$\dot{a}_\ell$. 

\begin{theor} \label{ThCivitas}
For a graph cocycle $\gamma$ on $n$ vertices, the Kontsevich flow $\dot{P}=\Or(\gamma)(P^{\otimes^n})$ restricts to the Nambu class~\eqref{EqPBrNambu} 
of Poisson brackets on the affine space~$\BBR^d$ in such a way that
\begin{align*}
    \dot{a}_{\ell} &= {}\\
 & \sum_{\sigma_1,\ldots,\sigma_{n-1}\in \BBS_d} \biggl(\prod_{i=1}^{n-1}(-)^{\sigma_i}\sigma_i((x^1,\ldots,x^d)_i)\biggr) \ (\text{marker\/-\/monomials} \sim\varrho^{n-1}a_{\ell}^n\cdot\mathop{{\prod}'}_{k\not=\ell} \lefteqn{a_k^{n-1}),}
\\
    \dot{\varrho} &= {}\\
 & \sum_{\sigma_1,\ldots,\sigma_{n-1}\in \BBS_d} \biggl(\prod_{i=1}^{n-1}(-)^{\sigma_i}\sigma_i((x^1,\ldots,x^d)_i)\biggr) \ (\text{marker\/-\/monomials} \sim\varrho^{n}a_1^{n-1}\dotsm \lefteqn{a_{d-2}^{n-1}).}
\end{align*}
The permutations $\sigma_i\in \BBS_d$ act on the partitioned set of subscripts 
$\bigsqcup\nolimits_{i=1}^{n-1}((x^1,\ldots,x^d)_i)$
\textup{(}for derivatives\textup{)} in each marker\/-\/monomial. Equivalently, we have that
\begin{align*}
    \dot{a}_{\ell} &= \sum_{\vec{\imath}^{\,1},\ldots,\vec{\imath}^{\,n-1}} \biggl(\bigotimes_{\alpha=1}^{n-1}\veps^{\vec{\imath}^{\,\alpha}} \cdot\partial_{\vec{\imath}^{\,\alpha}}\biggr) \:(\text{comultiples\:in\:marker\/-\/monomials} \sim\varrho^{n-1}a_{\ell}^n\cdot\mathop{{\prod}'}_{k\neq\ell} \lefteqn{a_k^{n-1} ),} \\
    \dot{\varrho} &= \sum_{\vec{\imath}^{\,1},\ldots,\vec{\imath}^{\,n-1}} \biggl(\bigotimes_{\alpha=1}^{n-1}\veps^{\vec{\imath}^{\,\alpha}} \cdot\partial_{\vec{\imath}^{\,\alpha}}\biggr) \:(\text{comultiples\:in\:marker\/-\/monomials} \sim\varrho^{n}a_{1}^{n-1}\cdot\ldots\cdot \lefteqn{a_{d-2}^{n-1} ),}
\end{align*}
with $d$-\/component multi\/-\/indices $\vec{\imath}^{\,\,\alpha}=(i_1^\alpha,\ldots,i_d^\alpha)$ in the Civita symbols $\veps^{\vec{\imath}^{\,\alpha}}$ on~$\BBR^d$.
\end{theor}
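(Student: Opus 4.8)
The plan is to combine the already-established closed formulas \eqref{EqSpeedCasimir} and \eqref{EqSpeedRho} for the induced velocities, which give existence of the collapse, with the Jacobian-cancellation mechanism of Section~\ref{SecGenJacobians}, which organizes the index structure. Note first that the two displayed forms in the statement are related by the elementary identity $\sum_{\sigma\in\BBS_d}(-)^{\sigma}\sigma(x^1,\ldots,x^d)=\sum_{\vec\imath}\veps^{\vec\imath}\,\partial_{\vec\imath}$, applied independently in each of the $n-1$ tuples; so it suffices to produce the $\otimes\veps$ form, from which the $\BBS_d$ form is read off at the end.

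First I would set up the constructive skeleton. For $\dot{a}_\ell$, Proposition~\ref{PropMKadotGraphs} gives $\dot{a}_\ell=\Or(\gamma)(P^{\otimes(n-1)}\otimes a_\ell)$; substituting the Nambu components \eqref{EqBiVectNambu} places into each of the $n-1$ vertices still occupied by $P$ one copy of the pre-factor $\varrho$ together with one Civita symbol $\veps^{\vec\imath^\alpha}$ ($\alpha=1,\ldots,n-1$), whose $d-2$ ``internal'' indices contract the Casimir gradients $\partial a_k$ inside that vertex and whose two ``bi-vector'' indices are carried by the arrows leaving that vertex, hence become derivative indices $\partial_i,\partial_j$ acting on the contents of neighbouring vertices. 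Thus all $d$ slots of each $\veps^{\vec\imath^\alpha}$ are manifestly \emph{derivative} indices, and the $\vec\imath^\alpha$ are independent summation indices attached to distinct vertices. For $\dot\varrho$ I would instead start from \eqref{EqSpeedRho}: the right-hand side carries the $n$ Civita symbols of $\dot P=\Or(\gamma)(P^{\otimes n})$, and division by the Jacobian determinant $P(\varrho\equiv1,[\ba])(f,g)$ --- which is clean by the Commentary to Corollary~\ref{CorRhoDotGraphs} --- removes exactly one of them, leaving $n-1$. In both cases I would collect the surviving $\varrho$-factors and $a_k$-gradients, together with the higher derivatives produced by the graph's in-coming arrows, into a single \emph{marker-monomial}, displaying the velocity as a sum over $\vec\imath^{\,1},\ldots,\vec\imath^{\,n-1}$ of $\bigl(\bigotimes_\alpha\veps^{\vec\imath^\alpha}\partial_{\vec\imath^\alpha}\bigr)$ acting on those marker-monomials.

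It then remains to confirm the grouping and to strengthen the single diagonal skew-symmetrization of Proposition~\ref{PropOnceSkew} to the independent $(\BBS_d)^{\times(n-1)}$ action. Here I would invoke the three analysis facts together with the covariance built into the construction (Section~\ref{SecStability} and Lemma~\ref{Lemma3DtrivialEvolve}): $\dot{a}_\ell$ inherits the scalar character of $a_\ell$, while $\dot\varrho$ inherits the density weight $\varrho(\bx)=\varrho'(\bX')\det\lVert\partial\bx/\partial\bX'\rVert$ of \eqref{EqChangeRho}. Under a change $\bx\rightleftarrows\bX'$ the $\varrho^{n-1}$ (resp.\ $\varrho^{n}$) content supplies $n-1$ (resp.\ $n$) Jacobian determinants, whereas each fully antisymmetric contraction of $d$ derivative indices with a Civita symbol supplies exactly one inverse Jacobian $\det\lVert\partial\bX'/\partial\bx\rVert$. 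For $\dot{a}_\ell$ to be a scalar all $n-1$ Jacobians must cancel; for $\dot\varrho$ to be a weight-one density exactly one of the $n$ survives. Since each monomial carries precisely $(n-1)\times d$ derivatives, the only admissible pattern producing the required $n-1$ inverse Jacobians is a partition into exactly $n-1$ antisymmetric $d$-tuples, none shared --- which is exactly the structure exhibited constructively, and which forces the independence of the $n-1$ permutation actions.

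The main obstacle I anticipate is that this weight-counting is not valid monomial-by-monomial: under a \emph{nonlinear} change the high-order derivatives $\partial^{|\tau|}\varrho$ and $\partial^{|\sigma|}a_k$ acquire inhomogeneous connection-type corrections, so individual terms do not transform by a pure Jacobian power --- precisely the tension flagged in Remark~\ref{RemMakeRhoUnit}, where the normal-coordinate rescaling $\varrho'\equiv\pm1$ is uncorrelated with the affine structure that the graphs see. The resolution is that covariance is known \emph{a priori} for the fully assembled velocity: it is forced by the tensorial nature of the Kontsevich construction and by the Leibniz identity \eqref{EqLeibnizARho} already exploited in Section~\ref{SecStability}, so the correction terms must cancel in the sum. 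The weight-counting is therefore used only to select, among representations compatible with the fixed homogeneity, the one with the declared $n-1$ Civita factors; the explicit existence of that representation is supplied by the constructive route above and checked by direct calculation. Once all $d$ indices per tuple are accounted for as derivative indices, the passage to the $\sum_{\sigma_i\in\BBS_d}\prod_i(-)^{\sigma_i}\sigma_i$ form is the elementary Civita identity recalled at the outset, completing the proof.
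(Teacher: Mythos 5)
Your skeleton shares its core with the paper's own proof: the weight count matching the $n-1$ (resp.\ $n$) Jacobian factors coming from $\varrho^{n-1}$ (resp.\ $\varrho^{n}$) against inverse Jacobians produced by antisymmetrized $d$-tuples of derivatives is exactly the paper's Commentary argument, and your constructive remark for $\dot{a}_\ell$ is sound and even more direct than the paper's route: substituting the Nambu components~\eqref{EqBiVectNambu} into $\Or(\gamma)(P^{\otimes(n-1)}\otimes a_\ell)$ of Proposition~\ref{PropMKadotGraphs} literally yields one Civita symbol per $P$-vertex, all $d$ slots of which are derivative indices, with the $\vec{\imath}^{\,\alpha}$ independent by construction. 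The genuine gap is in how you resolve the obstacle you correctly flag. You assert that under \emph{nonlinear} coordinate changes the connection-type corrections to the higher derivatives of $\varrho$ and $a_k$ ``must cancel in the sum'' because covariance of the assembled velocity is ``forced by the tensorial nature of the Kontsevich construction''. That covariance does not exist: Kontsevich's graph flows are covariant \emph{only} under affine changes $\bx=A\bx'+\boldb$; under a nonlinear change the flow computed in the new chart is genuinely different from the pushforward of the flow computed in the old chart --- this is why the affine structure is an assumption of the whole construction, and it is the very point of Remark~\ref{RemMakeRhoUnit}, which you cite: the nonlinear normal-coordinate changes are ``a priori not correlated at all with the affine structure --- which the graph flows refer to.'' The paper's resolution is the opposite of yours: one \emph{restricts} to affine changes, for which the second- and higher-order derivatives of the transition functions vanish identically, so all jets of $a_\ell$ and $\varrho$ transform through the constant Jacobian $A$ alone, there are no corrections to cancel, and the weight count is valid term by term. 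As written, your argument rests on a false claim; invoking the affine restriction instead of it repairs the proof and recovers the paper's reasoning.

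A second, smaller gap concerns $\dot{\varrho}$. Saying that division of the right-hand side of~\eqref{EqSpeedRho} by the Jacobian determinant ``removes exactly one'' of the $n$ Civita symbols is not an argument: the divisibility guaranteed by Corollary~\ref{CorRhoDotGraphs} does not operate index-by-index, and exhibiting the quotient with a manifest $(n-1)$-fold Civita structure is essentially the statement to be proved, so this step is circular. Unlike $\dot{a}_\ell$, the velocity $\dot{\varrho}$ has no direct graph-evaluation formula, so for it the (affine-)covariance argument is not a mere confirmation of a structure ``exhibited constructively'' --- it is the proof.
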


\begin{proof}[Commentary]
The right\/-\/hand side of the velocity $\dot{a}_\ell$ or $\dot{\varrho}$ is a very interesting object from analytic and combinatorial viewpoint: with all the first- and higher-order derivatives in the velocity, it looks like the Jacobian determinant w.r.t.\ \textit{each} tuple $(x^1$,\ $\ldots$,\ $x^d)\rightleftarrows(\partial_{x^1}$,\ $\ldots$,\ $\partial_{x^d})$, although the derivatives from different tuples can act on the same comultiple. It remains therefore to control the non\/-\/tensorial behaviour of the \textit{higher}-order derivatives (which do actually occur in the expressions under study, as seen from examples). Fortunately, this is where our initial assumption works: Kontsevich's graph flow is defined over an \textit{affine} manifold so that the second- and higher\/-\/order derivatives of the coordinate changes vanish identically for $\bx=A\mathbf{x}'+\boldb$ (with a constant Jacobian matrix~$A$). Thus, higher derivatives of~$a_\ell$ and~$\varrho$ are transformed by using only the first derivatives of the coordinate changes, whence the assertion.
\end{proof}

\begin{example}[$\gamma_3$-flow over~$\BBR^3$]\label{ExG3R3ThreeCivitas}
For the tetrahedral flow $\dot{P}=\Or(\gamma_3)(P^{\otimes^4})$ over~$\BBR^3$ we recall from Eq.~\eqref{EqG3ARhoDotAnsatz} in Example~\ref{ExR3G3} that
\begin{align*}
    \dot{a} &\sim \varrho^3a^4 \text{ with } xxxyyyzzz \text{ in each monomial,} \\
    \dot{\varrho} &\sim \varrho^4a^3 \text{ with } xxxyyyzzz \text{ in each monomial.}
\end{align*}
Now Theorem~\ref{ThCivitas} works: the $(4-1)\times3$ base variables are partitioned in three triples $(x,y,z)$ in each term, with a skew\/-\/symmetrization over each triple. Indeed, by a brute force calculation we verify that for the $\gamma_3$-flow over~$\BBR^3$,
\begin{equation}\label{EqG3ARhoDotCollapsed}
\begin{aligned}
    \dot{a}=\sum_{\sigma,\tau,\zeta\in \BBS_3} (-)^\sigma & (-)^\tau(-)^\zeta\bigl(
    2a_{u_1}a_{u_2}a_{u_3}\varrho_{w_1}\varrho_{w_2}\varrho_{w_3}a_{v_1v_2v_3} \\[-1em] 
    &{}-6\varrho a_{u_1v_2}a_{u_2}a_{u_3}\varrho_{w_1}\varrho_{w_3}a_{v_1v_3w_2} 
    -6\varrho^2a_{u_1}a_{u_2u_3}a_{v_1v_2}\varrho_{w_3}a_{v_3w_1w_2}\bigr), 
\\
    \dot{\varrho}=\sum_{\sigma,\tau,\zeta\in \BBS_3} (-)^\sigma & (-)^\tau(-)^\zeta
    \bigl(-2a_{u_1}a_{u_2}a_{u_3}\varrho_{v_1}\varrho_{v_2}\varrho_{v_3}\varrho_{w_1w_2w_3}
  \\[-1em]  
    &{}+6a_{u_1v_2}a_{u_2}a_{u_3}\varrho_{v_1}\varrho_{v_3}\varrho_{w_2}\varrho_{w_1w_3} 
    -12\varrho a_{u_1}a_{u_2u_3}a_{v_1v_2}\varrho_{v_3}\varrho_{w_1}\varrho_{w_2w_3}
  \\   
    &{}-6\varrho a_{u_1v_2}a_{u_2}a_{u_3}\varrho_{v_1}\varrho_{v_3}\varrho_{w_1w_2w_3} 
    +6\varrho^2 a_{u_1}a_{u_2u_3}a_{v_1v_2}\varrho_{v_3}\varrho_{w_1w_2w_3}\bigr),
\end{aligned}
\end{equation}
where each summation runs over three permutations $\sigma,\tau,\zeta \in \BBS_3$ giving three triples $(u_1,v_1,w_1) = (\sigma(x),\sigma(y),\sigma(z))$, also $(u_2,v_2,w_2) = (\tau(x), \tau(y), \tau(z))$, and $(u_3,v_3,w_3) = (\zeta(x), \zeta(y), \zeta(z))$.

We conclude that the 228 monomials in $\dot{a}$ and 426 monomials in $\dot{\varrho}$ which we started with are completely determined by only three marker\/-\/monomials for $\dot{a}$ and five marker\/-\/monomials for $\dot{\varrho}$ by using three Civita symbols in either 
formula.%
\footnote{\label{FootAllByEleven} Not only this: the three and five respective marker\/-\/monomials in both the velocities $\dot{a}$ and $\dot{\varrho}$ \textit{and} the 1,504 differential monomials in each component of the bi\/-\/vector flow $\dot{P}=\Or(\gamma_3)(P^{\otimes^4})$ for $P(\varrho,[a])$ on~$\BBR^3$ are completely determined by the eleven marker\/-\/monomials in the trivializing vector field $\smash{\vec{X}}$, which we obtain for the $\gamma_3$-flow over~$\BBR^3$ in section~\ref{SecG3R3VFCivitas}.} 
\end{example}

The natural question is \textit{how} the nine symbols $xxxyyyzzz$ in each term were distributed among the disjoint triples $xyz,xyz,xyz$ (to be permuted by $\sigma$, $\tau$ and $\zeta$ respectively); we shall analyze this in the next section.

\begin{example}[{$\gamma_3$-flow of $P(\varrho\equiv1,[a_1],[a_2])$ on~$\BBR^4$}]\label{ExG3R4Rho1ThreeCivitas}
Consider the ``authentic" Nambu\/-\/determinant bracket $P(\varrho\equiv1,[a_1],[a_2])$ and induce the $\gamma_3$-flow of the Casimirs~$a_1$ and~$a_2$, see Example~\ref{ExG3R4}. Owing to Theorem~\ref{ThCivitas} we collapse the 9,024 terms in either~$\dot{a}_1$ and~$\dot{a}_2$ to the thrice alternating formulas, namely
\begin{subequations}
\begin{align}
\dot{a}_1 = \sum_{\sigma,\tau,\zeta\in \BBS_4} (-)^\sigma(-)^\tau(-)^\zeta\bigl(
    &3a_{1;s_1u_2u_3}a_{1;t_1t_2}a_{2;s_2}a_{2;s_3v_1}a_{2;t_3u_1}a_{1;v_2}a_{1;v_3}
\notag\\[-0.5em]
    &{}+6a_{1;s_1u_2}a_{1;t_1}a_{1;t_2v_3}a_{1;u_3v_1v_2}a_{2;t_3u_1}a_{2;s_2}a_{2;s_3}
\bigr),\label{EqG3R4Rho1a1dot}
\\
\dot{a}_2 = \sum_{\sigma,\tau,\zeta\in \BBS_4} (-)^\sigma(-)^\tau(-)^\zeta\bigl(
    &3a_{1;s_1}a_{2;t_1u_2}a_{2;u_1u_3v_2}a_{1;s_2t_3}a_{1;s_3t_2}a_{2;v_1}a_{2;v_3}
\notag\\[-0.5em]
    &{}-3a_{1;s_1t_2}a_{2;u_1}a_{2;u_2u_3v_1}a_{1;t_1}a_{1;t_3}a_{2;s_2v_3}a_{2;s_3v_2}
\bigr),\label{EqG3R4Rho1a2dot}
\end{align}
\end{subequations}
where each summation runs over three permutations $\sigma,\tau,\zeta \in \BBS_4$ giving three $4$-\/tuples $(s_1,t_1,u_1,v_1) = (\sigma(x),\sigma(y),\sigma(z),\sigma(w))$, also $(s_2,t_2,u_2,v_2) = (\tau(x),\tau(y),\tau(z),\tau(w))$, and $(s_3,t_3,u_3,v_3) = (\zeta(x),\zeta(y),\zeta(z),\zeta(w))$.

Again, our task is to explain \textit{how} these formulas are obtained, i.e.\ how one can guess the right partitionings of $xxxyyyzzzwww$ in each monomial into three $4$-tuples $(x,y,z,w)$.
\end{example}

\begin{rem}\label{RemG3R4Rho1InequivMarkers}
The partitioning $xxxyyyzzzwww = xyzw \sqcup xyzw \sqcup xyzw$ within the second marker\/-\/monomial in the polynomial under the sum 
for the velocity~$\dot{a}_1$ in~\eqref{EqG3R4Rho1a1dot} is different from the analogous partitioning in the second marker\/-\/monomial (with coefficient $-3$) in the mirror\/-\/reflected formula~\eqref{EqG3R4Rho1a2dot} of the velocity~$\dot{a}_2$. The structural inequivalence of the two partitionings does occur modulo the relabelling $a_1\rightleftarrows a_2$ and modulo arbitrary reshuffles of the three $4$-\/tuples $\{s$,\ $t$,\ $u$,\ $v\}_k$ indexed by $k\in\{1,2,3\}$ and arbitrary permutations of $s$,\ $t$,\ $u$,\ $v$ in any of the $4$-\/tuples. Indeed, the last marker\/-\/monomial in~\eqref{EqG3R4Rho1a1dot} for~$\dot{a}_1$ contains the product of second 
derivatives $a_{1;s_1u_2}\cdot a_{1;t_2v_3}$ in which 
all the three $4$-\/tuples are mixed, whereas one of the $4$-\/tuples is \emph{not} present at all in the product of second derivatives $a_{2;s_2v_3}\cdot a_{2;s_3v_2}$ in the last marker\/-\/monomial in~\eqref{EqG3R4Rho1a2dot} for~$\dot{a}_2$.

Yet both the marker\/-\/monomials yield the sums (over $\sigma,\tau,\zeta\in \BBS_4$) which are mirror\/-\/reflections of each other under the swap $a_1\rightleftarrows a_2$. This is an example of marker\/-\/monomials' hidden symmetry which we discuss in the next section.
\end{rem}

\begin{example}[$\gamma_3$-flow over~$\BBR^4$ with $\varrho\not\equiv1$]\label{ExG3R4GenRhoThree}
The 33,084 terms in $\dot{a}_1$ or in its mirror-reflection $\dot{a}_2$ are captured  
--\,for the tetrahedral $\gamma_3$-flow on the space of generalized Nambu\/--\/Poisson brackets $P(\varrho,[a_1],[a_2])$ on~$\BBR^4\ni\bx=(x,y,z,w)$\,-- by three Civita symbols (or equivalently, by three permutations) using the formulas

{\tiny
\begin{subequations} \label{EqA1dotA2dotG3R4GenRho}
\begin{align}
    \dot{a}_1 =& \sum_{\sigma_1,\sigma_2,\sigma_3\in \BBS_4} (-)^{\sigma_1}(-)^{\sigma_2}(-)^{\sigma_3} \cdot \bigl(
3a_{1;s_1u_2u_3}a_{1;t_1t_2}a_{2;s_2}a_{2;s_3v_1}a_{2;t_3u_1}a_{1;v_2}a_{1;v_3}\varrho^3 
\notag\\
&+6a_{1;s_1u_2}a_{1;t_1}a_{1;t_2v_3}a_{1;u_3v_1v_2}a_{2;t_3u_1}a_{2;s_2}a_{2;s_3}\varrho^3 
+3a_{1;v_2}a_{1;t_1u_2v_3}a_{2;v_1}\varrho_{s_1}a_{1;u_1}a_{1;u_3}a_{2;s_2t_3}a_{2;s_3t_2}\varrho^2 
\notag\\
&-6a_{1;s_1v_3}a_{2;s_2t_1}\varrho_{v_1}a_{1;s_3u_1v_2}a_{1;u_2}a_{1;u_3}a_{2;t_2}a_{2;t_3}\varrho^2
-6a_{1;s_1v_2v_3}a_{1;t_1}a_{1;u_2}a_{1;u_3v_1}a_{2;s_2}a_{2;s_3u_1}a_{2;t_3}\varrho_{t_2}\varrho^2 
\notag\\ 
&+6a_{1;s_1}a_{1;s_2v_3}a_{1;s_3u_1}a_{1;t_1t_2t_3}a_{2;v_1}\varrho_{v_2}a_{2;u_2}a_{2;u_3}\varrho^2 
-6a_{1;s_1}a_{1;t_1u_2u_3}a_{2;u_1v_2}a_{1;t_2}a_{1;t_3}a_{2;s_2}a_{2;s_3}\varrho_{v_1}\varrho_{v_3}\varrho 
\notag\\
&+6a_{1;v_2}a_{1;s_1}a_{1;s_2s_3t_1}a_{1;t_2t_3}\varrho_{v_1}\varrho_{v_3}a_{2;u_1}a_{2;u_2}a_{2;u_3}\varrho
-2a_{1;s_1}a_{1;t_2}a_{1;t_3u_1u_2}a_{1;u_3}a_{2;t_1}a_{2;s_2}a_{2;s_3}\varrho_{v_1}\varrho_{v_2}\varrho_{v_3}
    \bigr), 
\\
    \dot{a}_2 =& \sum_{\sigma_1,\sigma_2,\sigma_3\in \BBS_4} (-)^{\sigma_1}(-)^{\sigma_2}(-)^{\sigma_3} \cdot \bigl(
3a_{1;s_1}a_{2;t_1u_2}a_{2;u_1u_3v_2}a_{1;s_2t_3}a_{1;s_3t_2}a_{2;v_1}a_{2;v_3}\varrho^3 
\notag\\ 
&-3a_{1;s_1t_2}a_{2;u_1}a_{2;u_2u_3v_1}a_{1;t_1}a_{1;t_3}a_{2;s_2v_3}a_{2;s_3v_2}\varrho^3 
-6a_{1;u_1}a_{2;t_1t_2v_3}\varrho_{t_3}a_{1;u_2v_1}a_{1;u_3v_2}a_{2;s_1}a_{2;s_2}a_{2;s_3}\varrho^2 
\notag\\ 
&+6a_{1;s_1}a_{1;t_1t_3}a_{1;u_2}a_{2;v_2}a_{2;s_2v_1v_3}a_{2;s_3t_2}a_{2;u_1}\varrho_{u_3}\varrho^2
+6a_{1;t_1u_2}a_{2;u_1v_2}\varrho_{u_3}a_{2;s_1s_2s_3}a_{1;v_1}a_{1;v_3}a_{2;t_2}a_{2;t_3}\varrho^2
\notag\\
&+3a_{2;v_1}a_{2;s_1s_2s_3}\varrho_{t_1}a_{2;t_2v_3}a_{2;t_3v_2}a_{1;u_1}a_{1;u_2}a_{1;u_3}\varrho^2
-6a_{1;t_1u_2}a_{2;u_1u_3v_2}a_{1;v_1}a_{1;v_3}\varrho_{t_2}\varrho_{t_3}a_{2;s_1}a_{2;s_2}a_{2;s_3}\varrho 
\notag\\
&-6a_{2;t_1u_2v_3}a_{2;u_1u_3}a_{2;v_1}a_{2;v_2}\varrho_{t_2}\varrho_{t_3}a_{1;s_1}a_{1;s_2}a_{1;s_3}\varrho
+2a_{2;s_1}a_{2;s_2s_3t_1}\varrho_{v_1}a_{2;v_2}a_{2;v_3}\varrho_{t_2}\varrho_{t_3}a_{1;u_1}a_{1;u_2}a_{1;u_3} 
    \bigr),
\end{align}
\end{subequations}

}

\noindent%
here $\{s_i,t_i,u_i,v_i\}=\sigma_i(x,y,z,w)$ for $\sigma_i\in \BBS_4$. Finding a compact
expression of $\dot{\varrho}\not\equiv0$ with 90,024 differential monomials in it, now using three Civita symbols, is a computationally much larger task than collapsing the velocities of the Casimirs.
\end{example}

\section{Marker\/-\/monomials and their hidden symmetry}\label{SecMarkers}
\noindent%
In this section we analyse \emph{how} the totally skew\/-\/symmetric differential polynomial velocities~$\dot{\varrho}$ and~$\dot{a}_\ell$ are represented by sums, with Civita symbols, using few differential monomials: we count their number and we explore the choice of suitable marker\/-\/monomials.

\begin{define} \label{DefMarkerMonomial}
A \textit{marker\/-\/monomial} in the fibre variables $\varrho$,\ $a_1$,\ $\ldots$,\ $a_{d-2}$ over the base variables $x^1$,\ $\ldots$,\ $x^d$ is a differential monomial in the jet variables $\varrho_{\kappa_\alpha}$,\ $a_{1;\lambda_{1,\beta}}$,\ $\ldots$,\ $a_{d-2;\lambda_{d-2,\delta}}$ (here the multi\/-\/indices $\kappa_\alpha,\lambda_{i,\beta}$ for the derivatives satisfy $0\leqslant|\kappa_\alpha|,|\lambda_{i,\beta}|<\infty$) such that $\sum_\alpha |\kappa_\alpha|+\sum_{i=1}^{d-2} \sum_\beta |\lambda_{i,\beta}|=\mu\cdot d$ with $\mu\in\BBN_{\geqslant1}$, such that $\bigcup_\alpha\kappa_\alpha \cup \bigcup_{i=1}^{d-2} \bigcup_\beta \lambda_{i,\beta} =\bigcup_{k=1}^{\mu}\{x^1$,\ $\ldots$,\ $x^d\}_k$, and such that all the base variables~$x^\ell$ in the multi\/-\/indices (denoting the respective derivatives) are partitioned into $\mu$~disjoint $d$-\/tuples $x^1$,\ $\ldots$,\ $x^d$. 

Every such tuple then corresponds to its own alternating sum $\sum_{\sigma\in \BBS_d}(-)^{\sigma}\sigma(x^1$,\ $\ldots$,\ $x^d)$:
each (of the $\mu$ in total) permutation~$\sigma_k$ acts on the respective $d$-\/tuple $\{x^1$,\ $\ldots$,\ $x^d\}_k$ 
of base variables within the multi\/-\/indices $\kappa_\alpha$,\ $\lambda_{i,\beta}$ of jet variables $\varrho_{\kappa_\alpha}$,\ $a_{1;\lambda_{1,\beta}}$,\ $\ldots$,\ $a_{d-2;\lambda_{d-2,\delta}}$ in the marker\/-\/monomial. 
Equivalently, $k$th sum over the permutation group $\BBS_d\ni\sigma_k$ corresponds to the Civita summation $\sum_{\vec{\imath}}\veps^{\vec{\imath}}$ chosen such that the marker\/-\/monomial ``as is" occurs with the plus sign when $\vec{\imath}=(1$,\ $2$,\ $\ldots$,\ $d)$, so the base variables $x^1$,\ $\ldots$,\ $x^d$ in the $d$-\/tuple are then represented by the variables $x^{i_1}$,\ $\ldots$,\ $x^{i_d}$ in the subscripts, respectively.
\end{define}

\begin{define}\label{DefZeroMarker}
A marker\/-\/monomial is called \textit{zero} if the alternating sum over all permutations of all the $d$-\/tuples $(x^1$, $\ldots$,\ $x^d)$ in it is identically equal to zero.
\end{define}

\begin{example} \label{ExMarkerMonomialsD2}
Let $x,y$ be the base variables and $\varrho$ be the fibre variable. Consider the marker\/-\/monomial $M_1=\varrho_{x_1}\varrho_{y_1}\varrho_{x_2y_2}$ with the two\/-\/tuples $\{x_1y_1\}\bigsqcup\{x_2y_2\}$. Taking the alternating sum,
\[
    \sum_{\sigma\in \BBS_2}\sum_{\tau\in \BBS_2} (-)^\sigma(-)^\tau \varrho_{\sigma(x)}\varrho_{\sigma(y)}\cdot\varrho_{\tau(x)\tau(y)} =(\varrho_x\varrho_y-\varrho_y\varrho_x) \cdot(\varrho_{xy}-\varrho_{yx})\equiv0,
\]
we establish that the marker\/-\/monomial $M_1$ is a zero marker.

\noindent$\bullet$\quad But let us instead take the marker\/-\/monomial $M_2=\varrho_{x_1}\varrho_{y_2}\varrho_{x_2y_1}$ with a different partitioning of the letters $xxyy$ as they are seen in $M_1$. We now have that
\begin{equation}
    \varrho_x\varrho_y\varrho_{xy}-\varrho_y\varrho_y\varrho_{xx}-\varrho_x\varrho_x\varrho_{yy}+\varrho_y\varrho_x\varrho_{yx}\not\equiv0.
\end{equation}
In other words, the new marker\/-\/monomial $M_2$ is not zero any longer, even though the profile $|\sigma_1|=1=|\sigma_2|$, $|\sigma_3|=2$ of the comultiples in the product $\varrho_{\sigma_1}\cdot\varrho_{\sigma_2}\cdot\varrho_{\sigma_3}$ is the same as in~$M_1$.
\end{example}

\begin{define} \label{DefDiffProfile}
The \textit{differential profile} of orders of the derivatives in a marker\/-\/monomial $M=\varrho_{\kappa_1}\varrho_{\kappa_2}\dots a_{1;\lambda_1}\dots a_{d-2;\mu_1}\dots$ is the set of pairs $\{\varrho|\kappa_1|$,\ $\varrho|\kappa_2|$,\ $\ldots$,\ $a_1|\lambda_1|$,\ $\ldots$,\ $a_{d-2}|\mu_1|$,\ 
$\ldots\} \mathrel{\stackrel{\text{def}}{=}} \{\varrho|\kappa_1||\kappa_2|\dots$,\ $a_1|\lambda_1|\dots$, $\ldots$,\ $a_{d-2}|\mu_1|\dots\}$: each (instance of a) fibre variable is followed by the nonnegative order(s) of its derivative(s).%
\footnote{\label{FootDiffProfileOrderTen}%
The first variant of notation is inevitable if some of the orders is at least~$10$; in this note, the other variant of notation is enough (see Tables~\ref{TabG3R3CountProfiles}--\ref{TabG3R4GenRhoCount} in the next section).}
\end{define}

\begin{example} \label{ExProfiles}
Both marker\/-\/monomials in Example~\ref{ExMarkerMonomialsD2} have the same differential profile $\varrho1\varrho1\varrho2$ (equivalently, $\varrho112$), yet $M_1$~is zero whereas $M_2$~is not zero as a marker.
\end{example}

The differential profile of a marker\/-\/monomial is thus a coarse invariant (w.r.t.\ permutations of all the base variable in it, or w.r.t.\ a permutation of the base variables within one of the $d$-tuples $x^1$,\ $\ldots$,\ $x^d$). It is clear also that marker\/-\/monomials of unequal differential profiles cannot be obtained one from another by permuting the comultiples or by permuting the base variables (what the alternating sum does by definition). This implies that to represent a differential polynomial by an alternating sum over the permutations which act on the base variable in the marker\/-\/monomials, the sums of terms of unequal differential profiles can be processed independently one from another. 

\begin{rem} \label{RemMarkerNotUnique}
Representations of a differential polynomial by using 
marker\/-\/monomials are not unique. Indeed, the marker can be picked for any value of the permutation(s). For instance, we have that 
\begin{multline*}
    \sum_{\sigma,\tau\in \BBS_2}(-)^\sigma(-)^\tau\varrho_{\sigma(x)}\varrho_{\tau(x)}\varrho_{\sigma(y)\tau(y)} = \sum_{\sigma,\tau\in \BBS_2}(-)^\sigma(-)^\tau\varrho_{\sigma(y)}\varrho_{\tau(y)}\varrho_{\sigma(x)\tau(x)} = {}\\
   {} = -\sum_{\sigma,\tau\in \BBS_2}(-)^\sigma(-)^\tau \varrho_{\sigma(x)}\varrho_{\tau(y)}\varrho_{\sigma(y)\tau(x)} = -\sum_{\sigma,\tau\in \BBS_2}(-)^\sigma(-)^\tau \varrho_{\sigma(y)}\varrho_{\tau(x)}\varrho_{\sigma(x)\tau(y)}.
\end{multline*}
Indeed, each of the four choices of the monomial marks the same expression, $\varrho_x^2\varrho_{yy}-2\varrho_x\varrho_y\varrho_{xy}+\varrho_y^2\varrho_{xx}\not\equiv0$.
\end{rem}

At the same time, for two nonzero marker\/-\/monomials of equal differential profiles it can be that their alternating sums are neither equal nor proportional to each other but intersect, that is, the two resulting differential polynomials have common term(s).

\begin{counterexample} \label{CounterExRightChoice} 
The monomial $a_x\varrho_xa_{yy}\varrho_{xy}$ is a term in the alternating sums for the markers
\[
    M_3 = a_{\sigma(x)}\varrho_{\tau(x)}a_{\sigma(y)\zeta(y)}\varrho_{\zeta(x)\tau(y)} \quad\text{and}\quad
    M_4 = a_{\sigma(x)}\varrho_{\tau(x)}a_{\tau(y)\zeta(y)}\varrho_{\zeta(x)\sigma(y)},
\]
indeed showing up when $\sigma=\tau=\zeta=\id$, but the two fully alternating sums are not equal,
\[
    \sum_{\sigma,\tau,\zeta\in \BBS_2}(-)^{\sigma}(-)^{\tau}(-)^{\zeta}\sigma\otimes\tau\otimes\zeta(M_3)\not=\sum_{\sigma,\tau,\zeta\in \BBS_2}(-)^{\sigma}(-)^{\tau}(-)^{\zeta}\sigma\otimes\tau\otimes\zeta(M_4),
\]
which can be seen by straightforward expansion. The two differential polynomials are not even multiples of one another.
\end{counterexample}

This implies that to represent a given sum, the marker\/-\/monomial can be unique (up to a given permutation of the base variables in a $d$-tuple) but the choice of the base variables' partitioning (into the disjoint $d$-tuples) can be not unique, and only the right choice does the job. This ambiguity yields a nontrivial problem of finding the ``true" partitioning of the $\mu\cdot d$ derivatives into $\mu$~tuples $\{x^1$,\ $\ldots$,\ $x^d\}$ in each term of the right\/-\/hand sides $\dot{\varrho}([\varrho],[\ba])$,\ $\dot{a}_\ell([\varrho],[\ba])$ for a given Kontsevich's graph flow on the space of Nambu\/--\/Poisson brackets~$P(\varrho,[\ba])$.

We discover that this anticipated ambiguity is heavily suppressed by an extra, so far hidden symmetry of these graph flows on this particular class of Poisson brackets on~$\BBR^d$.

\begin{proposition}[$\dot{a},\dot{\varrho}$ for $\gamma_3$-flow over~$\BBR^3$]\label{PropHyperSymG3R3}
In the evolution $\dot{\varrho},\dot{a}$ which is induced by the tetrahedral flow on the class of generalized ($\varrho\not\equiv1$) Nambu\/--\/Poisson brackets $P(\varrho,[a])$ on~$\BBR^3$, the count of differential monomials of unequal differential profiles is presented in Table~\ref{TabG3R3CountProfiles}.%
\footnote{\label{FootRedirectArrowsG3R3} From Proposition~\ref{PropMKadotGraphs} we recall that the velocity $\dot{a}$ is encoded using the Kontsevich graphs by formula~\eqref{EqSpeedCasimir}. Because the entire flow $\dot{P}=\Or(\gamma_3)(P^{\otimes^4})$ is specified by the directed graph cocycle $\Or(\gamma_3)$, the velocity $\dot{\varrho}$ is deduced from Eq.~\eqref{EqSpeedRho}. One can inspect in full detail how the arrows, targeted on a copy of~$a$ in the construction of~$\dot{a}$, spread over copies of~$\varrho$ and~$a$ to form~$\dot{\varrho}$ in Eq.~\eqref{EqLeibnizARho}. This is why there is much similarity in the differential profiles of terms in the two velocities (as seen from Table~\ref{TabG3R3CountProfiles}).}%
\begin{table}[htb]
\caption{\label{TabG3R3CountProfiles} The number of monomials and their differential profiles in~$\dot{a}$ and~$\dot{\varrho}$ for the tetrahedral $\gamma_3$-flow over~$\BBR^3$.}
\centerline{
    \begin{tabular}{|p{4cm}|p{4cm}|}
    \hline
    In $\dot{a}$ & In $\dot{\varrho}$ \\ 
    \hline
    54:\,\,\, $a1113\varrho111$ & 54: \,\,\,$a111\varrho1113$ \\
    \hline
    & 102: $a112\varrho1112$ \\
    \hline
    102: $a1123\varrho011$ & 102: $a112\varrho0113$ \\
    \hline
    & 96: \,\,\,$a122\varrho0112$ \\
    \hline
    72:\,\,\, $a1223\varrho001$ & 72: \,\,\,$a122\varrho0013$ \\
    \hline
    \end{tabular}}
\end{table}

\noindent$\bullet$\quad For each of the three differential profiles of monomials in~$\dot{a}$ and five in~$\dot{\varrho}$, we discover that for \textit{any} choice of nonzero marker\/-\/monomial with that profile, its total skew\/-\/symmetrization (using three permutations, each acting on its own tuple $xyz$), taken with a suitable 
nonzero coefficient, exactly equals the entire sum of all the terms with that differential profile. In other words, for each of the $3+5$ differential profiles of monomials in~$\dot{a}$ and~$\dot{\varrho}$ respectively, the total skew\/-\/symmetrizations of all nonzero markers of a fixed profile are multiples of each other.
\end{proposition}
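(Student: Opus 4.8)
The statement splits into the counting (Table~\ref{TabG3R3CountProfiles}) and the ``hidden symmetry'' proper. For the table I would simply take the explicit right-hand sides $\dot a$ ($228$ terms) and $\dot\varrho$ ($426$ terms) --- produced from the graph by Proposition~\ref{PropMKadotGraphs} and Corollary~\ref{CorRhoDotGraphs} --- sort their monomials according to the differential profile of Definition~\ref{DefDiffProfile}, and count. The coincidence of the $a$- and $\varrho$-profiles in each row is then explained by the way the in-coming arrows on the checked copy of $a$ redistribute over $\varrho$ and $a$ when one passes from $\dot a$ to $\dot\varrho$ through Eq.~\eqref{EqLeibnizARho}.

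Now fix one differential profile $\pi$ and let $V_\pi$ be the sum of all terms of profile $\pi$ in the velocity. I would reformulate the assertion as the single statement $\dim\mathcal{S}_\pi=1$, where $\mathcal{S}_\pi$ is the span of the skew-symmetrizations $\operatorname{Skew}(M)=\sum_{\sigma_1,\sigma_2,\sigma_3\in\BBS_3}(-)^{\sigma_1}(-)^{\sigma_2}(-)^{\sigma_3}(\sigma_1\otimes\sigma_2\otimes\sigma_3)(M)$ of all markers $M$ of profile $\pi$. Indeed, by Theorem~\ref{ThCivitas} the collapsed formula~\eqref{EqG3ARhoDotCollapsed} already exhibits $V_\pi$ as a nonzero multiple of $\operatorname{Skew}(M_\pi)$ for the marker $M_\pi$ of that row, so $0\neq V_\pi\in\mathcal{S}_\pi$; once $\dim\mathcal{S}_\pi=1$ is known, every $\operatorname{Skew}(M)$ is a multiple of $V_\pi$, and it is a \emph{nonzero} multiple precisely when $M$ is not a zero marker in the sense of Definition~\ref{DefZeroMarker}. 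This is exactly the claim.

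The one-dimensionality I would derive from two elementary facts about $\operatorname{Skew}$, followed by one combinatorial input per profile. The first is \emph{symmetric-slot rigidity}: if two derivative slots of $M$ are interchanged by a symmetry of the monomial (the two indices of one higher-order derivative, or the corresponding slots of two structurally identical factors), then any marker placing both of them in the same triple is a zero marker, since the relevant Civita symbol antisymmetrizes a pair of indices sitting on a symmetric expression. The second is \emph{triple-relabelling invariance}: because the three alternating sums over $\BBS_3$ are independent and commute, $\operatorname{Skew}(M)=\operatorname{Skew}(M')$ whenever $M'$ arises from $M$ by permuting the three triples (permuting indices inside an identical-factor class changes nothing, while the $u,v,w$-relabelling inside one triple only flips an overall sign). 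Granting these, a nonzero marker of profile $\pi$ is a distribution of the $9$ slots into three triples in which every symmetric or identical pair is split across distinct triples; the content of the proof is that, for each of the $3+5$ profiles, all such admissible distributions lie in a single orbit of the triple-relabelling action. For the top profile $a1113\varrho111$ this is immediate: the three order-$1$ copies of $a$, the three copies of $\varrho$, and the three indices of the order-$3$ $a$ are each forced one-per-triple, so the marker is unique up to the symmetries above. For a mixed profile such as $a1123\varrho011$ the admissible distributions are parametrised by the bijections between the three ``paired'' slot-classes and the three two-element subsets of $\{1,2,3\}$ (equivalently their complementary triples), on which $\BBS_3$ acts simply transitively by post-composition; hence again a single orbit and $\dim\mathcal{S}_\pi=1$.

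The main obstacle is exactly this last, per-profile combinatorial step, and it is genuinely where the ``hidden symmetry'' resides: Counterexample~\ref{CounterExRightChoice} shows that once the factors are sufficiently generic (distinct orders on distinct fibre variables, as in $d=2$) the admissible distributions split into several orbits whose skew-symmetrizations are not proportional, so no soft argument can be valid for all profiles. I would therefore carry out the orbit count separately for the three profiles in $\dot a$ and the five in $\dot\varrho$, using rigidity to prune the admissible distributions and the simple transitivity of the $\BBS_3$-relabelling to collapse what remains; the bookkeeping for $\dot\varrho$, whose profiles carry bare factors $\varrho$ of order $0$ and one extra copy of $\varrho$, is the most laborious but follows the same pattern. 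A final cross-check is that the single surviving marker of each profile, once skew-symmetrized, reproduces the corresponding row of~\eqref{EqG3ARhoDotCollapsed} on the nose.
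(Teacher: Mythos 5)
Your two preparatory lemmas are correct (the alternating sum over one triple kills any marker that puts a symmetric index pair, or the slots of two identical first\/-\/order factors, into the same triple; and the triple skew\/-\/symmetrization is invariant under relabelling the three triples), and anchoring the nonvanishing of each profile sum in Eq.~\eqref{EqG3ARhoDotCollapsed} is legitimate. The gap is in the decisive step: the claim that, for every one of the $3+5$ profiles, all markers not killed by the rigidity lemma form a \emph{single} orbit under triple relabelling combined with monomial symmetries. This is true exactly for the five profiles whose split slots form three two\/-\/element classes ($a1113\varrho111$, $a1123\varrho011$, $a111\varrho1113$, $a112\varrho1112$, $a112\varrho0113$), where your bijection argument applies. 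It is false for the three profiles containing \emph{two second\/-\/order derivatives of the same fibre variable}: $a1223\varrho001$ in $\dot a$, and $a122\varrho0112$, $a122\varrho0013$ in $\dot\varrho$. In $a1223\varrho001$ there are only \emph{two} paired classes (the index pairs $A$, $B$ of the two $a^{(2)}$'s) plus two singleton slots ($a^{(1)}$ and $\varrho^{(1)}$); after the third\/-\/order triple is forced one\/-\/per\/-\/triple, the surviving distributions fall into two distinct orbits: the singletons may share a triple, as in $M_1=\varrho^2\,a_{u_3}\varrho_{v_3}\,a_{u_1u_2}a_{v_1v_2}\,a_{w_1w_2w_3}$, or lie in different triples, as in the marker $\varrho^2\,a_{u_1}a_{u_2u_3}a_{v_1v_2}\varrho_{w_3}a_{v_3w_1w_2}$ actually used in~\eqref{EqG3ARhoDotCollapsed}. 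No relabelling can identify these: in $M_1$ one triple is disjoint from $A\cup B$, in the other marker every triple meets $A\cup B$. Moreover $M_1$ is \emph{not} a zero marker in the sense of Definition~\ref{DefZeroMarker}: its skew\/-\/symmetrization equals $\varrho^2\sum_{\zeta}(-)^{\zeta}a_{\zeta(x)}\varrho_{\zeta(y)}\,T_{\zeta(z)}$ with $T_c=\sum_{\sigma,\tau}(-)^{\sigma}(-)^{\tau}a_{\sigma(x)\tau(x)}a_{\sigma(y)\tau(y)}a_{\sigma(z)\tau(z)c}$, and $T_z$ contains the monomial $a_{xx}a_{yy}a_{zzz}$ with coefficient $2$, which cannot be cancelled by the other five outer terms because their first\/-\/order prefactors $a_{\zeta(x)}\varrho_{\zeta(y)}$ are distinct. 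For $a122\varrho0112$ the situation is worse still: four paired classes and one singleton yield four orbits, classified by which two classes share the doubled $2$-subset, modulo the swap $A\rightleftarrows B$.

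Consequently, for these three profiles the proportionality of the skew\/-\/symmetrizations \emph{across distinct orbits} --- which is precisely the surprising content of the hidden symmetry --- is not delivered by your argument, and no soft orbit count can deliver it: the paper's own Proposition~\ref{PropHyperSymG3R4A1A2} exhibits a profile on~$\BBR^4$ where the one\/-\/marker property genuinely fails, and Counterexample~\ref{CounterExRightChoice}, which you cite, shows that markers of equal profile in different orbits need not give proportional sums. The paper does not prove Proposition~\ref{PropHyperSymG3R3} structurally at all; like the other combinatorial statements, it is verified by direct calculation on the explicit $228$- and $426$-term polynomials. Your route is a genuinely nicer explanation for five of the eight rows of Table~\ref{TabG3R3CountProfiles}, but to close the argument you must still either compute and compare the skew\/-\/symmetrizations of one representative per orbit (and match them against the profile sums) for $a1223\varrho001$, $a122\varrho0112$, $a122\varrho0013$, or find a new idea that explains why inequivalent orbits give proportional results there.
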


This reveals a previously hidden, extra symmetry of the objects in Kontsevich's flow under study.

The case of Nambu\/--\/Poisson structures (with $\varrho\not\equiv1$) on~$\BBR^4$, when the tetrahedral $\gamma_3$-flow induces the evolution $\dot{a}_1,\dot{a}_2$ and $\dot{\varrho}\not\equiv0$, is even more interesting: we observe the exact same extra symmetry for all but one differential profiles, and one profile exceptionally requires the use of two marker\/-\/monomials.

\begin{proposition}[$\dot{a}_1$,\ $\dot{a}_2$ for $\gamma_3$-flow with $\varrho\not\equiv1$ over~$\BBR^4$]\label{PropHyperSymG3R4A1A2}
The count of differential monomials of unequal profiles in the velocities $\dot{a}_1$ and~$\dot{a}_2$ (see Example~\ref{ExG3R4Rho1ThreeCivitas}) is summarized in Table~\ref{TabG3R4GenRhoCount}.
(The symmetry in how the Casimirs $a_1$ and $a_2$ appear in the Nambu\/-\/determinant Poisson bracket is naturally reflected in their evolution under the tetrahedral $\gamma_3$-flow).
\begin{table}[htb]
\caption{\label{TabG3R4GenRhoCount} The count of monomials w.r.t.\ their differential profiles in~$\dot{a}_1$ and~$\dot{a}_2$ for the tetrahedral $\gamma_3$-flow on the space of generalized ($\varrho\not\equiv1$) Nambu\/--\/Poisson brackets on~$\BBR^4$.}
\centerline{
    \begin{tabular}{|p{5cm}|p{5cm}|} 
    \hline
    In $\dot{a}_1$ & In $\dot{a}_2$ \\ 
    \hline
    4512: $a_1 1123 a_2 122\varrho000$ & 4512: $a_1 122 a_2 1123\varrho000$ \\
    \hline
    4512: $a_1 1223 a_2 112\varrho000$ & 4512: $a_1 112 a_2 1223\varrho000$ \\
    \hline
    3168: $a_1 1113 a_2 122\varrho001$ & 3168: $a_1 122 a_2 1113\varrho001$ \\
    \hline
    7872: $a_1 1123 a_2 112\varrho001$ & 7872: $a_1 112 a_2 1123\varrho001$ \\
    \hline
    3168: $a_1 1223 a_2 111\varrho001$ & 3168: $a_1 111 a_2 1223\varrho001$ \\
    \hline
    3984: $a_1 1113 a_2 112\varrho011$ & 3984: $a_1 112 a_2 1113\varrho011$ \\
    \hline
    3984: $a_1 1123 a_2 111\varrho011$ & 3984: $a_1 111 a_2 1123\varrho011$ \\
    \hline
    1848: $a_1 1113 a_2 111\varrho111$ & 1848: $a_1 111 a_2 1113\varrho111$ \\
    \hline
    \end{tabular}}
\end{table}

\noindent$\bullet$\quad The homogeneous differential polynomial components of all profiles except the 7872 terms with $a_1 1123 a_2 112 \varrho001$ and the 7872 terms with $a_1 112 a_2 1123 \varrho001$ enjoy the same extra symmetry as at $d=3$: just one, arbitrarily chosen nonzero marker\/-\/monomial suffices to express the entire sum. In particular, this is always so in the restricted case $\varrho\equiv1$ when $\dot{\varrho}\equiv0$ and the nontrivial velocities $\dot{a}_1$,\ $\dot{a}_2$ realize the entire evolution of the class $\{P(\varrho\equiv1,[a_1],[a_2])\}$.

In either of the two exceptional cases (one in~$\dot{a}_1$ and the other in~$\dot{a}_2$, with necessarily $\varrho\not\equiv1$), when two marker\/-\/monomials are needed, the first choice is still arbitrary but the next choice is constrained by the former.%
\footnote{\label{FootLooksLikeRoots} This looks similar to the construction of a basis in $\BBE^2$ by using a root system with the Coxeter graph ${\bullet}\!\text{\textbf{---}}\!{\bullet}$: selecting the first vector is free but as one proceeds, the remaining direction is constrained.}
\end{proposition}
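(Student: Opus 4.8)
The plan is to reduce both halves of the statement to a single linear\/-\/algebraic invariant attached to each differential profile, and then to determine that invariant profile by profile. First I would generate $\dot a_1$ from Proposition~\ref{PropMKadotGraphs}, i.e.\ as $\Or(\gamma_3)(P^{\otimes3}\otimes a_1)$ evaluated on the Nambu bi\/-\/vector $P=P(\varrho,[a_1],[a_2])$ of~\eqref{EqBiVectNambu} on~$\BBR^4$; the mirror velocity $\dot a_2$ then costs nothing, because interchanging the two Casimirs swaps two rows of the Jacobian so that $P|_{a_1\rightleftarrows a_2}=-P$, whence the degree\/-\/four object $\dot P=\Or(\gamma_3)(P^{\otimes4})$ is invariant and $\dot a_2=\dot a_1|_{a_1\rightleftarrows a_2}$. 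This symmetry forces the two columns of Table~\ref{TabG3R4GenRhoCount} to be reflections of one another, so it is enough to treat $\dot a_1$. Sorting the expanded terms by differential profile (Definition~\ref{DefDiffProfile}), which is preserved by every skew\/-\/symmetrization, splits $\dot a_1$ into the eight homogeneous blocks of the table, whose cardinalities are read off directly from the expansion.

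Next I would restate the hidden\/-\/symmetry claim as a rank. By Theorem~\ref{ThCivitas} the block of $\dot a_1$ of a fixed profile lies in the image of the triple antisymmetrizer $\mathsf A=\prod_{i=1}^{3}\tfrac1{4!}\sum_{\sigma_i\in\BBS_4}(-)^{\sigma_i}\sigma_i$, where $\sigma_i$ permutes the labels of the $i$th tuple $(x,y,z,w)_i$. Write $V$ for the linear span, inside the space of honest differential polynomials of that profile, of the antisymmetrizations $\mathsf A(M)$ of all nonzero markers~$M$. Because a marker is declared nonzero exactly when $\mathsf A(M)\neq0$ (Definition~\ref{DefZeroMarker}), and because the block is a nonzero element of~$V$, the assertion ``any nonzero marker reproduces the whole block up to a scalar'' is equivalent to $\dim V=1$; the exceptional case is $\dim V=2$ together with the block not being proportional to any single $\mathsf A(M)$. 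To bound $\dim V$ I would pass to markers up to symmetry: a marker survives ($\mathsf A(M)\neq0$) iff no fibre factor carries two derivatives drawn from the same tuple, which here is automatic since every order is at most $n-1=3$; such labelled markers have trivial stabilizer in $\BBS_4^{\times3}$, so their orbits are classified by an incidence matrix (three rows of sum~$4$, one column per positive\/-\/order factor of sum equal to that factor's order), read modulo permuting the three rows and permuting equal\/-\/order factors of a common fibre variable. This is precisely the ``structural (in)equivalence'' of Remark~\ref{RemG3R4Rho1InequivMarkers}, and the number of such classes is an \emph{upper} bound for $\dim V$.

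The decisive point is that this bound is almost never attained: forgetting which tuple a derivative came from makes the antisymmetrizations of structurally inequivalent markers linearly dependent, and the content of the proposition is that they in fact collapse onto a single line. I would verify this by choosing one marker per incidence class, antisymmetrizing it into an honest differential polynomial, and computing the rank of the resulting family. For seven of the eight profiles this rank is~$1$, so every nonzero marker of the profile yields the same line and hence expresses the block; in particular this settles the restricted case $\varrho\equiv1$, where only the two $\varrho000$ profiles survive. For the profile $a_1\,1123\,a_2\,112\,\varrho001$ the rank is~$2$: the two order\/-\/$2$ factors (one in~$a_1$, one in~$a_2$), together with the single derivative of~$\varrho$, admit two inequivalent partitionings whose antisymmetrizations stay independent, and one checks that the block of $\dot a_1$ has nonzero projection onto both. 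Hence two markers are genuinely needed; the first may be any nonzero marker, after which the second is forced to complete a basis of the two\/-\/dimensional~$V$, exactly the ``free then constrained'' pattern of the Coxeter\/-\/graph footnote. The mirror statement for $\dot a_2$ follows from $a_1\rightleftarrows a_2$.

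I expect the main obstacle to be the rank computation itself, since the collapse from the incidence\/-\/class count down to $\dim V\in\{1,2\}$ is not forced by any orbit count: Counterexample~\ref{CounterExRightChoice} already shows that two markers of one profile can give non\/-\/proportional yet overlapping antisymmetrizations, so linear dependence must be established by genuine computation rather than read off combinatorially. Sharpest of all is the exceptional profile, where I must confirm not only that $\dim V=2$ but that the velocity is not accidentally aligned with one of the marker lines --- i.e.\ that both projection coefficients of the $7872$\/-\/term block are nonzero --- which is what certifies the \emph{necessity} of a second marker. For blocks of several thousand monomials this is a finite but heavy verification, naturally carried out in the \textsf{gcaops} software used for~\eqref{EqA1dotA2dotG3R4GenRho}; a residual, more conceptual task --- not needed for the proof but desirable --- would be to turn the incidence\/-\/matrix bookkeeping into a closed criterion explaining why this single configuration, and not the other profile that also carries two order\/-\/$2$ factors, is the two\/-\/dimensional one.
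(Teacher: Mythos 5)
Your proposal follows essentially the same route as the paper, which offers no analytic argument for Proposition~\ref{PropHyperSymG3R4A1A2}: there too, $\dot a_1$ and its mirror $\dot a_2$ are generated from formula~\eqref{EqSpeedCasimir}, partitioned by differential profile to produce Table~\ref{TabG3R4GenRhoCount}, and the one\/-\/marker (resp.\ two\/-\/marker) claims are certified by exactly the finite rank computation you describe, carried out by direct calculation in \textsf{gcaops}. Two side claims of yours are wrong but harmless: the Casimir swap gives $\dot a_2=-\dot a_1|_{a_1\rightleftarrows a_2}$ (the sign comes from the \emph{three} copies of $P$ in $\Or(\gamma_3)(P^{\otimes 3}\otimes a_2)$, not the four in $\dot P$), and a marker can vanish even when no single factor carries two same\/-\/tuple derivatives --- two derivatives from one tuple placed on two \emph{identical} factors, as in the pair $\varrho_{x_1}\varrho_{y_1}$ of Example~\ref{ExMarkerMonomialsD2}, also kill it, and neither degeneration is ruled out by the orders being at most~$3$ --- yet since zero markers contribute only zero vectors to your span~$V$ and Definition~\ref{DefZeroMarker} remains your operative test, the rank computation still decides every profile correctly.
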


The marker\/-\/monomial expression of~$\dot{\varrho}$ in the generic case $\varrho\not\equiv1$ on~$\BBR^4$ carrying the tetrahedral $\gamma_3$-flow --\,and a simultaneous study of the presence or absence of the new extra symmetry in it\,-- is a computationally challenging problem; the same applies to the pentagon\/-\/wheel $\gamma_5$-flow on~$\BBR^3$ (to collapse the known evolution $\dot{a}$,\ $\dot{\varrho}\not\equiv0$ by using five Civita symbols and to check the extra symmetry in the course of building the hypotheses about the $\mu\cdot d=5\cdot3$ base variables' partitioning into~$\mu\cdot\{xyz\}$).

\section{Vector field~$\smash{\vec{X}}$ which trivializes the $\gamma_3$-\/flow of Nambu brackets}\label{SecVFTrivialFlows} \noindent
Finally, we examine the Poisson triviality of the \textit{restriction} of Kontsevich's graph flow $\dot{P}=\Or(\gamma_3)(P^{\otimes^n})$ to the space of Nambu\/--\/Poisson structures $P(\varrho,[\ba])$ on~$\BBR^d$. (There is no known mechanism for Kontsevich's graph flows to be trivial in the second Poisson cohomology of~$P$ for nontrivial graph cocycles~$\gamma$ and generic Poisson structures.)

\subsection{The trivializing vector field $\smash{\vec{X}}(\gamma_3,\varrho,a)$ and Civita symbols in it} \label{SecG3R3VFCivitas}
Let us make a few estimates of differential polynomial degrees and orders. For every graph cocycle $\gamma=\sum_{\ell}c_\ell\cdot\gamma_\ell$ with graphs $\gamma_\ell$ on $n$ vertices and $2n-2$ edges, the restriction of Kontsevich's flow $\dot{P}=\Or(\gamma)(P^{\otimes^n})$ to the space of generalized ($\varrho\not\equiv1$) Nambu\/-\/determinant Poisson bi\/-\/vectors $P(\varrho,[\ba])$ with $d-2$ global Casimirs $\ba=(a_1,\ldots,a_{d-2})$ on~$\BBR^d$ contains, in each term of the differential\/-\/polynomial coefficient of the bi\/-\/vector $\dot{P}(\varrho,[\ba])$, $n\cdot(d-2)+2n-2=nd-2$ derivatives spread over $\varrho^n\cdot a_1^{n}\dots a_{d-2}^{n}$. (Hence there are $(nd-2)-(d-2)=(n-1)d$ derivatives spread over $\varrho^{n-1}\cdot a_k^n\cdot  
\prod^{\prime}_{j\neq k}a_j^{n-1}$ in $\dot{a}_k$ and over $\varrho^n\cdot a_1^{n-1}\dotsm a_{d-2}^{n-1}$ in~$\dot{\varrho}$.) The trivializing vector field $\smash{\vec{X}}=\sum_{i=1}^dX^i([\varrho],[\ba])\,\partial/\partial x^i$ with differential\/-\/polynomial coefficients satisfying the coboundary equation $\Or(\gamma)(P^{\otimes^n})=\schouten{P,\smash{\vec{X}}}$ for $P(\varrho,[\ba])$ would therefore have $(nd-2)-(d-2)-1=(n-1)d-1$ derivatives spread over $\varrho^{n-1}\cdot a_1^{n-1}\dotsm a_{d-2}^{n-1}$ in every term of each coefficient $X^i$. The Civita mechanism of base coordinates' partitioning now applies to the trivializing vector field. For the object $\smash{\vec{X}}$ to be a vector field under coordinate reparametrizations $\bx(\bX')\rightleftarrows\bX'(\bx)$, the behaviour of $n-1$ comultiples~$\varrho$ dictates that there are $n-1$ Civita symbols in each~$X^i$:
\begin{equation} \label{EqVFCivitas}
    \smash{\vec{X}}=\sum\nolimits_{\vec{\imath}^{\,1},\ldots,\vec{\imath}^{\,n-1}} \veps^{\vec{\imath}^{\,1}}\dotsm\veps^{\vec{\imath}^{\,n-1}}\cdot X_{\vec{\imath}^{\,1},\ldots,\vec{\imath}^{\,n-2};i_1^{n-1}\dotsm i_{d-1}^{n-1}} \cdot\partial/\partial x^{i_d^{n-1}}.
\end{equation}
In other words, the vector field coefficients~$X^i$ collapse by using all the indices of $n-2$ Civita symbols $\veps^{\vec{\imath}^{\,\alpha}}$ on~$\BBR^d$ and by using all but one last index of the $(n-1)\text{th}$ Civita symbol.

By Lemma~\ref{Lemma3DtrivialEvolve},
if the trivializing vector field~$\vec{Y}=-\vec{X}$ exists
such that $\dot{P}\bigl([\varrho]$,$[\ba]\bigr) = \lshad P,\vec{X} \rshad$,
the velocities of scalar Casimirs are $\dd/\dd t_{Y}
({a}_k)=-\smash{\vec{X}}(a_k)$. 
Nontrivial here is that \textit{zero} marker\/-\/monomials can be produced in the velocity $
-\bigl(\sum_{i=1}^dX^i([\varrho],[\ba])\,\partial/\partial x^i\bigl)(a_k)$ from nonzero marker\/-\/monomials in the right\/-\/hand side of~\eqref{EqVFCivitas}. This prompts
that the velocity $\dot{a}_k$, which was obtained directly from the graph cocycle $\gamma$ by using formula~\eqref{EqSpeedCasimir}, can involve fewer marker\/-\/monomials than there are terms to express the coefficient~$X^i$ by~\eqref{EqVFCivitas}. We observe this effect already in the simplest case, namely for the Kontsevich tetrahedral flow (so $n=4$) and the generalized ($\varrho\not\equiv1$) Nambu\/-\/determinant Poisson structures $P(\varrho,[a])$ on~$\BBR^3$ (so $d=3$).
The same concerns~$\dot{\varrho}$ with only five terms in~\eqref{EqG3ARhoDotCollapsed} on p.~\pageref{EqG3ARhoDotCollapsed}.

\begin{theor} \label{ThG3R3VF}
The Kontsevich tetrahedral flow $\dot{P}=\Or(\gamma_3)(P^{\otimes^4})$ for the Nambu\/--\/Poisson brackets $P(\varrho,[a])$ on~$\BBR^3$ is Poisson\/-\/cohomology trivial.

\noindent$\bullet$\quad The equivalence class $\smash{\vec{X}}\mod\schouten{P,H}$ of trivializing vector fields $\smash{\vec{X}}$ satisfying the co\-boun\-d\-ary condition $\Or(\gamma_3)(P^{\otimes^4})=\schouten{P,\smash{\vec{X}} }$ is represented by the following vector field with differential\/-\/polynomial coefficients $X^i([\varrho],[a])$\textup{:}
\[
    \smash{\vec{X}}=\sum\nolimits_{\vec{\imath},\vec{\jmath},\vec{k}} \veps^{\vec{\imath}}\veps^{\vec{\jmath}}\veps^{\vec{k}}\cdot X_{\vec{\imath}\,\vec{\jmath}\,\vec{k}},
\]
where 

{\footnotesize
\begin{align*}
X_{\vec{\imath}\,\vec{\jmath}\,\vec{k}} =
&+12\varrho \varrho_{x^{k_2}} \varrho_{x^{i_1}x^{j_1}} a_{x^{k_3}} a_{x^{i_2}x^{j_2}} a_{x^{i_3}x^{j_3}} \cdot \dd/\dd x^{k_1}
+ 48\varrho \varrho_{x^{j_3}} \varrho_{x^{i_1}x^{j_1}} a_{x^{k_3}} a_{x^{i_2}x^{j_2}} a_{x^{i_3}x^{k_1}} \cdot \dd/\dd x^{k_2} \\
{}&+8\varrho_{x^{j_2}} \varrho_{x^{i_1}x^{k_1}} \varrho_{x^{i_2}x^{k_2}} a_{x^{i_3}} a_{x^{j_3}} a_{x^{k_3}} \cdot \dd/\dd x^{j_1}
- 40\varrho_{x^{i_3}} \varrho_{x^{j_2}} \varrho_{x^{i_1}x^{k_1}} a_{x^{j_3}} a_{x^{k_3}} a_{x^{i_2}x^{k_2}} \cdot \dd/\dd x^{j_1} \\
&+8\varrho_{x^{i_3}} \varrho_{x^{j_2}} \varrho_{x^{k_3}} a_{x^{j_3}} a_{x^{i_1}x^{k_1}} a_{x^{i_2}x^{k_2}} \cdot \dd/\dd x^{j_1} 
{}+ 24\varrho_{x^{j_2}} \varrho_{x^{k_3}} \varrho_{x^{i_1}x^{k_1}} a_{x^{i_3}} a_{x^{j_3}} a_{x^{j_1}x^{k_2}} \cdot \dd/\dd x^{i_2} \\
&-\underline{12\varrho^2 \varrho_{x^{k_2}} a_{x^{i_1}x^{j_1}} a_{x^{i_2}x^{j_2}} a_{x^{i_3}x^{j_3}x^{k_3}} \cdot \dd/\dd x^{k_1}}
+ \underline{24\varrho \varrho_{x^{j_2}} \varrho_{x^{k_1}} a_{x^{k_2}} a_{x^{i_1}x^{j_1}} a_{x^{i_3}x^{j_3}x^{k_3}} \cdot \dd/\dd x^{i_2}} \\
{}&-36\varrho \varrho_{x^{i_2}} \varrho_{x^{j_2}} a_{x^{k_2}} a_{x^{i_1}x^{j_1}} a_{x^{i_3}x^{j_3}x^{k_3}} \cdot \dd/\dd x^{k_1}
+ 8\varrho_{x^{i_2}} \varrho_{x^{j_1}} \varrho_{x^{k_1}} a_{x^{j_2}} a_{x^{k_2}} a_{x^{i_3}x^{j_3}x^{k_3}} \cdot \dd/\dd x^{i_1} \\
&-\underline{8\varrho_{x^{j_1}} \varrho_{x^{k_1}} \varrho_{x^{i_3}x^{j_3}x^{k_3}} a_{x^{i_2}} a_{x^{j_2}} a_{x^{k_2}} \cdot \dd/\dd x^{i_1}}.
\end{align*}

}

\noindent%
There are eleven terms in the marker\/-\/polynomial for $X_{\vec{\imath}\,\vec{\jmath}\,\vec{k}}$ but only the three underlined terms survive when the vector field $\smash{\vec{X}}$ acts on the Casimir~$a$\textup{;} the rest contributes to the velocity~$\dot{a}$ with zero markers. 

\noindent
$\bullet$\quad We verify that the velocity $\dot{a} = 4\Or(\gamma_3)(P,P,P,a)$, which we obtain by inserting the Casimir~$a(x,y,z)$
into (consecutively, each vertex of) the tetrahedron in Eq.~\eqref{EqSpeedCasimir}, is equal to the speed at which the scalar function~$a(x,y,z)$ changes at a point $\bx=(x,y,z)$ of~$\mathbb{R}^3$ under a local reparametrization of local coordinates along the integral trajectories of the vector field~$\vec{Y}=-\vec{X}$: $\dot{a} = -\vec{X}(a)$.\\
$\bullet$\quad Likewise, we verify that if
\[
\dot{a} = -\vec{X}(a) = \schouten{a, \vec{X}}\quad \text{and if}\quad \dot{\varrho} \cdot \partial_x \wedge \partial_y \wedge \partial_z = -\schouten{\varrho \cdot \partial_x \wedge \partial_y \wedge \partial_z, \vec{X}},
\]
then the Nambu\/-\/determinant Poisson bivector $P = \schouten{\varrho \cdot \partial_x \wedge \partial_y \wedge \partial_z, a}$ evolves by $\dot{P} = \schouten{P, \vec{X}} = -\lshad \vec{X},P \rshad$ over~$\mathbb{R}^3$ (according to the Leibniz rule shape~\eqref{JacForSchoutenAsLeibniz} of the Jacobi identity for the Schouten bracket).
\end{theor}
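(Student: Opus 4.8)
The plan is to reduce the Poisson-cohomological triviality to two velocity identities that are already under control, and then to read off the conclusion from the Schouten-calculus correlation of Lemma~\ref{Lemma3DtrivialEvolve}. By that lemma, a vector field $\smash{\vec{X}}$ solves the coboundary equation $\Or(\gamma_3)(P^{\otimes^4})=\schouten{P,\smash{\vec{X}}}$ for $P=\schouten{\varrho\,\dd_x\wedge\dd_y\wedge\dd_z,a}$ exactly when the velocities it induces, namely $-\smash{\vec{X}}(a)$ on the Casimir and the coefficient of $-\schouten{\varrho\,\dd_x\wedge\dd_y\wedge\dd_z,\smash{\vec{X}}}$ on the inverse density, coincide with the known $\dot{a}$ and $\dot{\varrho}$ from the collapsed formulas~\eqref{EqG3ARhoDotCollapsed}. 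Triviality is therefore established as soon as such an $\smash{\vec{X}}$ is produced: the final bullet's implication $\dot{P}=\schouten{P,\smash{\vec{X}}}$ is nothing but the Jacobi identity in its Leibniz-rule shape~\eqref{JacForSchoutenAsLeibniz}, applied with $A=\smash{\vec{X}}$, $B=\varrho\,\dd_x\wedge\dd_y\wedge\dd_z$, $Z=a$, together with the bilinearity $\dot{P}=\schouten{\dot{T},a}+\schouten{T,\dot{a}}$ of $P=\schouten{T,a}$ in its data, whence $\dot{P}=-\schouten{\smash{\vec{X}},P}=\partial_P(\smash{\vec{X}})$ is Poisson-exact and $[\dot{P}]=0$ in $H^2_P(\BBR^3)$.

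First I would set up the ansatz~\eqref{EqVFCivitas} specialized to $n=4$, $d=3$: two full Civita symbols $\veps^{\vec{\imath}}\veps^{\vec{\jmath}}$ together with all-but-one index of a third symbol $\veps^{\vec{k}}$, multiplying a marker-polynomial of homogeneity $\varrho^{3}a^{3}$ carrying $(n-1)d-1=8$ derivatives; this profile is forced by the tensorial behaviour of $\smash{\vec{X}}$ and by the cancellation of the $n-1=3$ Jacobians against the three reparametrized copies of $\varrho$, exactly as in Theorem~\ref{ThCivitas}. Expanding the Civita sums converts the coboundary equation into a finite linear system in the undetermined coefficients of the admissible marker-monomials; solving it (by hand at the collapsed level, or with the jet-space software cited earlier in the text) yields the eleven-term representative displayed in the statement. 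Existence of \emph{any} solution is the substantive point here, since the introduction stresses that there is no universal graph-level trivialization of $\gamma_3$; the solution is unique only modulo $\schouten{P,H}$, which is why the assertion is phrased for the class $\smash{\vec{X}}\bmod\schouten{P,H}$.

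Next I would verify the two induced velocities separately. For the Casimir, compute $-\smash{\vec{X}}(a)=-\sum_i X^i\,\dd a/\dd x^i$ and observe that eight of the eleven marker-monomials, once a derivative of $a$ is appended and the three independent $\BBS_3$-skew-symmetrizations are carried out, become \emph{zero} markers in the sense of Definition~\ref{DefZeroMarker}; only the three underlined terms survive, and they must reproduce the three marker-monomials of $\dot{a}$ in~\eqref{EqG3ARhoDotCollapsed}. This is cross-checked against $\dot{a}=4\,\Or(\gamma_3)(P,P,P,a)$ obtained from~\eqref{EqSpeedCasimir}. For the inverse density, I would expand the trivector Schouten bracket $-\schouten{\varrho\,\dd_x\wedge\dd_y\wedge\dd_z,\smash{\vec{X}}}$ and match its coefficient against the five-term $\dot{\varrho}$ in~\eqref{EqG3ARhoDotCollapsed}; here all eleven terms contribute in general, so no collapse to three occurs, and this is precisely the identity $\dot{\varrho}\cdot\dd_x\wedge\dd_y\wedge\dd_z=-\schouten{\varrho\,\dd_x\wedge\dd_y\wedge\dd_z,\smash{\vec{X}}}$ required by the last bullet.

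The hard part is the first step: producing $\smash{\vec{X}}$ and certifying that the linear system for its coefficients is consistent, since nothing a priori forces the $\gamma_3$-flow restricted to the Nambu class to be a Poisson coboundary. Moreover, the many-to-one passage from the eleven markers of $\smash{\vec{X}}$ to the three markers of $\dot{a}$ -- caused precisely by the zero-marker collapse identified above -- means that $\dot{a}$ alone cannot reconstruct $\smash{\vec{X}}$; the $\dot{\varrho}$-identity and the full coboundary equation are indispensable for pinning the representative down. Once $\smash{\vec{X}}$ is fixed and both velocity identities are verified, the conclusion $\dot{P}=\schouten{P,\smash{\vec{X}}}$ is immediate from~\eqref{JacForSchoutenAsLeibniz} as in the first paragraph, and with it the Poisson-cohomological triviality of the tetrahedral flow on $P(\varrho,[a])$ over~$\BBR^3$.
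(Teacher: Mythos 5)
Your proposal is correct and follows essentially the paper's own route: an undetermined-coefficients ansatz of Civita-symbol marker-monomials of homogeneity $\varrho^3a^3$ with eight derivatives (which the paper organizes as ``micro-graphs'' with tadpoles, enumerated by \textsf{nauty}), a linear algebraic system extracted from the coboundary equation $\dot{P}=\schouten{P,\smash{\vec{X}}}$, and direct Schouten-calculus verification of the remaining claims via Lemma~\ref{Lemma3DtrivialEvolve} and the Leibniz-rule shape~\eqref{JacForSchoutenAsLeibniz} of the Jacobi identity. The only cosmetic difference is the logical ordering: you recover $\dot{P}=\schouten{P,\smash{\vec{X}}}$ from the two velocity identities as a corollary, whereas the paper solves the coboundary equation directly and then verifies the velocity claims by calculation.
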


\begin{proof}
All these claims are established by direct calculation, as soon as the formula of vector field $\vec{X}$ is known.

To obtain this representative~$\vec{X}$ encoded by using the Civita symbols, we introduce a new graph calculus in which the Casi\-mir(s)~$a_i$ and the product of~$\varrho$ times Civita symbol are resolved to different vertices; from each vertex with~$\varrho$ and Civita symbol there are $d$~ordered edges outgoing, and tadpoles are allowed.
Now, to inspect a trivialization of the tetrahedral flow on the space of generalized Nambu\/-\/determinant Poisson structures over~$\mathbb{R}^3$, we make an ansatz with 
such ``micro\/-\/graphs'' on one sink with in\/-\/degree~$1$, three aerial vertices for $\varrho\,\varepsilon^{i_1 i_2 i_3}$ with three outgoing edges, and three aerial vertices with a copy of the Casimir~$a$ and no outgoing edges.
(We generated all these 
graphs by using \textsf{nauty}~\cite{nauty}.
)
Every graph at hand was interpreted as a differential polynomial because, we recall, ordered outgoing edges denote contractions of the respective upper indices in the Civita symbols and derivatives with respect to the chosen coordinates on~$\mathbb{R}^3$.
To each graph which result in a not identically zero differential polynomial in~$\varrho$,\ $a$ and the content of the sink (as we deal with a $1$-\/vector) we attach an undetermined coefficient.
In the frames of differential calculus (but not any longer within the diagrammatic algebra of graphs) we write down the equation $\dot{P} = \schouten{P, \vec{X}}$ with the tetrahedral flow of Nambu\/-\/Poisson structures in the left-hand side, and we solve the linear algebraic system for the undetermined coefficients.
A solution~$\vec{X}$ is reported in Theorem~\ref{ThG3R3VF}.
\end{proof}

In a subsequent publication~\cite{skew22} we explore the new formalism of ``micro\/-\/graphs'' in more detail: in particular, we study the nature of identities at the level of graphs if the corresponding equalities are known to hold for differential polynomials.

\begin{rem}
There can be no trivializing vector field~$\vec{X}$ in Theorem~\ref{ThG3R3VF} without tadpoles, i.e.\ there is no solution~$\vec{X}$ without micro\/-\/graph vertices in which an arrow from a Civita symbol acts back on its co\/-\/multiple~$\varrho$.
\end{rem}

\begin{example}
In the above formula of $X_{\vec{i}\vec{j}\vec{k}}$, the last term with a third derivative $\varrho_{x^{i_3}x^{j_3}x^{k_3}}$ manifests a tadpole in the graph realization.
\end{example}

\subsection{Open problems about the graph flows and their trivializing vector fields $\smash{\vec{X}}([\varrho],[\ba])$} \label{SecOpenPrb}
The study of Kontsevich flows --\,for the tetrahedral and pentagon\/-\/wheel graph cocycles (or higher vertex number cocycles $\gamma_7$,\ $[\gamma_3,\gamma_5]$,\ $\gamma_9$, etc.)\,-- restricted to the spaces of generalized ($\varrho\not\equiv1$) Nambu\/-\/determinant Poisson brackets $P([\varrho],[\ba])$ on~$\BBR^3$ and~$\BBR^4$ (or higher\/-\/dimensional affine spaces~$\BBR^d$) is, first of all, a source of combinatorial and algorithmic problems about finding the explicit shape of the objects. In particular, such is the task to collapse formulae, originally derived within the graph language, by using the Civita symbols. The other set of problems concerns the geometric nature and properties of the objects; such are the construction of the trivializing vector fields and explanation of the deeper symmetry in the choice of marker\/-\/monomials under the sums with Civita symbols. Let us summarize these problems in the order how they naturally emerge.

\begin{open}[$\dot{\varrho}$ in $\gamma_3$-flow over~$\BBR^4$]\label{PrbG3R4RhoDotSkew}
Represent the known velocity $\dot{\varrho}([\varrho],[\ba])$ for the tetrahedral $\gamma_3$-flow over~$\BBR^4$ by using three Civita symbols. Does the choice of marker\/-\/monomials enjoy the extra symmetry which is revealed in Proposition~\ref{PropHyperSymG3R3} for the $\gamma_3$-flow over~$\BBR^3$ and in Proposition~\ref{PropHyperSymG3R4A1A2} for~$\dot{\ba}$ over~$\BBR^4$?
\end{open}

\begin{open}[$\dot{\varrho},\dot{a}$ in $\gamma_5$-flow over~$\BBR^3$]\label{PrbG5R3AdotRhoDotSkew}
Represent the known velocities $\dot{\varrho}([\varrho],[a])$ and $\dot{a}([\varrho],[a])$ for the pentagon\/-\/wheel $\gamma_5$-flow over~$\BBR^3$ by using five Civita symbols. Does the extra symmetry persist for the marker\/-\/monomials in either velocity?
\end{open}

\begin{open}[$\smash{\vec{X}}$ for $\gamma_3$-flow on~$\BBR^4$ with $\varrho\equiv1$]\label{PrbVFG3R4Rho1}
Inspect whether the restriction of the tetrahedral $\gamma_3$-flow to the space of Nambu\/-\/determinant Poisson structures $P([\ba])$ on~$\BBR^4$ with $\varrho\equiv1$ is trivial in the second Poisson cohomology. --- Let us presume that there exists a trivializing vector field $\smash{\vec{X}}([\ba])$ with differential\/-\/polynomial coefficients. If it actually does, represent the coefficients --\,of possibly another vector field~$\smash{\vec{Y}}$ from the coset $\smash{\vec{X}}\mod\schouten{P,\cdot}$\,-- by using three Civita symbols on~$\BBR^4$. Do the marker\/-\/monomials in~$\vec{Y}([\ba])$ enjoy the extra symmetry?
\end{open}

\begin{open}[$\smash{\vec{X}}$ for $\gamma_3$-flow on~$\BBR^4$ with $\varrho\not\equiv1$]\label{PrbVFG3R4GenericRho}
Extend and solve Problem \ref{PrbVFG3R4Rho1} in the general case $\varrho\not\equiv1$ on~$\BBR^4$, now for the trivializing vector field~$\smash{\vec{X}}([\varrho],[\ba])$.
\end{open}

\begin{open}[$\smash{\vec{X}}$ for $\gamma_5$-flow on~$\BBR^3$]\label{PrbVFG5R3GenericRho}
Solve the trivialization problem --\,fully analogous to the above Problems~\ref{PrbVFG3R4Rho1}--\ref{PrbVFG3R4GenericRho}\,-- for the pentagon\/-\/wheel $\gamma_5$-flow over~$\BBR^3$. If it exists, the trivializing vector field $\vec{Y}([\varrho],[a])$ from the coset $\smash{\vec{X}}\mod\schouten{P,\cdot}$ (defined modulo Hamiltonian vector fields) will again be realizable by using five Civita symbols on~$\BBR^3$.
\end{open}

\begin{open} \label{PrbVFviaGraphsNambu}
Can the trivializing vector fields $\smash{\vec{X}}([\varrho],[\ba])$ be constructed --\,for nontrivial graph cocycles $\gamma$\,-- and induce the graph flows $\dot{P}=\Or(\gamma)(P^{\otimes^n})$ on the spaces of Nambu\/-\/determinant Poisson brackets $P(\varrho,[\ba])$ directly from the graph cocycles~$\gamma$ on $n$~vertices \textit{and} from the properties of the particular Poisson geometry of the Nambu brackets with global Casimirs? In other words, what are the marker\/-\/monomials for $\vec{Y}\in\vec{X}\mod\schouten{P,\cdot}$ as differential\/-\/geometric objects? (Note that the knowledge of the vector field $\smash{\vec{X}}([\varrho],[\ba])$ as the parent object for the Lie derivative~$\mathrm{L}_{\smash{\vec{X}}}$ and for $\dot{P}=\schouten{P,\smash{\vec{X}}}$ is enough to calculate~$\dot{\ba}$ and~$\dot{\varrho}$.)
\end{open}

\begin{open} \label{PrbVectorFieldsGammaNormal}
Is there a relation between the (pseudo)\/group of diffeomorphisms generated by the highly nonlinear vector fields $\smash{\vec{X}}([\varrho],[\ba])$ from the graph cocycle flows and, on the other hand, the (local) diffeomorphisms $\bx\rightleftarrows\bX'$ that map (by blowing up the local coordinates) the cells bounded by $\varrho(\bx)=0$ in~$\BBR^d$ to the domains on which $\varrho'(\bx')\equiv\pm1$?
\end{open}

In conclusion, we note that whenever they are Poisson\/-\/cohomology trivial (as we observe so far in all the cases), the nontrivial graph cocycle flows on the spaces of generalized Nambu\/-\/determinant Poisson brackets $P(\varrho,[\ba])$ not only preserve the symplectic foliation (dictated by the Casimirs~$\ba$) by merely reparametrizing the coordinate description of points still not anyhow displacing the symplectic leaves, but also preserve the tiling of the affine space~$\BBR^d$ with respect to the zero locus of the inverse density~$\varrho$ in $P(\varrho,[\ba])$. Both the foliation and 
tiling are thus rigid under the graph cocycle flows.

\appendix
\section{A class of (non)polynomial Poisson brackets on~$\BBR^d$ without global polynomial Casimir} \label{AppMKPoisson}
\noindent First, let us recall a particular construction of homogeneous polynomial\/-\/coefficient Poisson brackets on~$\BBR^d$ with Cartesian coordinates $x^1$,\ $\ldots$,\ $x^d$.

Denote by $\smash{\vec{E}}$ the Euler vector field, $\smash{\vec{E}}=\sum_i x^i\cdot\partial/\partial x^i$, and consider another nonzero vector field $\smash{\vec{V}}=\sum_j V^j(x^1$,\ $\ldots$,\ $x^d)\cdot\partial/\partial x^j$ with homogeneous polynomial coefficients~$V^j$ of total degree $k\gg1$ (conveniently starting at $k=2$). This homogeneity assumption implies that $\smash{\vec{E}}(\smash{\vec{V}})=k\cdot\smash{\vec{V}}$ and $\smash{\vec{V}} (\smash{\vec{E}}) = 1\cdot\vec{V}$, whence $[\smash{\vec{E}},\smash{\vec{V}}]=(k-1)\cdot\smash{\vec{V}}$.

By definition, put $P\mathrel{{:}{=}}\smash{\vec{V}}\wedge\smash{\vec{E}}$; this is a bi\/-\/vector with homogeneous\/-\/polynomial coefficients (of degree $k+1$).

\begin{lemma} \label{LemmaMKPoisson}
All such bi\/-\/vectors $P=\smash{\vec{V}}\wedge\smash{\vec{E}}$ on~$\BBR^d$ are Poisson.
\end{lemma}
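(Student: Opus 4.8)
The plan is to verify the defining condition for $P$ to be Poisson, namely that its Schouten bracket with itself vanishes, $\schouten{P,P}=0$. Since $P=\vec{V}\wedge\vec{E}$ is \emph{decomposable}, as a wedge of two vector fields, I would first establish the standard self\/-\/bracket identity for such a bi\/-\/vector. Expanding $\schouten{A\wedge B,\,A\wedge B}$ by the graded Leibniz rule, the diagonal contributions vanish because $[A,A]=[B,B]=0$, while the two off\/-\/diagonal terms coincide after using $[B,A]=-[A,B]$ together with one transposition in the wedge, so that
\begin{equation*}
\schouten{A\wedge B,\,A\wedge B}=2\,[A,B]\wedge A\wedge B .
\end{equation*}
This reduces the Poisson property to the single geometric condition $[A,B]\wedge A\wedge B=0$, i.e.\ that the commutator $[A,B]$ lie pointwise in the plane $\Span(A,B)$.

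Next I would feed in the commutator already computed in the excerpt. With $A=\vec{V}$ and $B=\vec{E}$, the homogeneity of $\vec{V}$ of degree~$k$ gives $[\vec{E},\vec{V}]=(k-1)\cdot\vec{V}$, hence $[\vec{V},\vec{E}]=-(k-1)\cdot\vec{V}$. Substituting,
\begin{equation*}
\schouten{P,P}=2\,[\vec{V},\vec{E}]\wedge\vec{V}\wedge\vec{E}=-2(k-1)\,\vec{V}\wedge\vec{V}\wedge\vec{E}=0,
\end{equation*}
the last equality because $\vec{V}\wedge\vec{V}=0$. Hence $P$ is Poisson, uniformly in the dimension~$d$ and the degree~$k$, and the argument is manifestly coordinate\/-\/free.

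The computation has essentially no obstacle: its entire content is that homogeneity forces $[\vec{V},\vec{E}]$ to be proportional to $\vec{V}$ itself, so that it lies in $\Span(\vec{V},\vec{E})$ automatically and the triple wedge $[\vec{V},\vec{E}]\wedge\vec{V}\wedge\vec{E}$ collapses. The only point deserving care is the sign/normalization convention in the decomposable\/-\/bivector identity; I would pin it down once by the graded\/-\/Leibniz expansion above rather than quoting it, but in any case the precise nonzero constant is irrelevant, since $\schouten{P,P}$ is a fixed multiple of $[\vec{V},\vec{E}]\wedge\vec{V}\wedge\vec{E}$ and that three\/-\/vector already vanishes.
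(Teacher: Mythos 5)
Your proof is correct and follows essentially the same route as the paper: both expand the Schouten bracket of the decomposable bi\/-\/vector $\vec{V}\wedge\vec{E}$ into commutators of the two $1$-vector factors, discard the vanishing diagonal terms $[\vec{V},\vec{V}]$ and $[\smash{\vec{E}},\smash{\vec{E}}]$, and then use the homogeneity relation $[\smash{\vec{E}},\vec{V}]=(k-1)\cdot\vec{V}$ so that the result collapses to a multiple of $\vec{V}\wedge\vec{V}\wedge\smash{\vec{E}}\equiv0$. The only (immaterial) difference is that you package the expansion as the identity $\schouten{A\wedge B,A\wedge B}=2\,[A,B]\wedge A\wedge B$ before substituting, whereas the paper substitutes directly.
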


\begin{proof}
Let us calculate the Schouten bracket $\schouten{P,P}=\schouten{\smash{\vec{V}}\wedge\smash{\vec{E}}, \smash{\vec{V}}\wedge\smash{\vec{E}}}$ by using its inductive definition for decomposable multi\/-\/vectors and thus, reducing it to the calculation of commutators for $1$-vector fields:
\begin{multline*}
    \schouten{\vec{V}\wedge\vec{E},\vec{V}\wedge\vec{E}} = \vec{V}\wedge[\smash{\vec{E}},\vec{V}]\wedge\vec{E} -\vec{V}\wedge[\smash{\vec{E}},\smash{\vec{E}}]\wedge\vec{V} -\smash{\vec{E}}\wedge[\vec{V},\vec{V}]\wedge\vec{E} +\smash{\vec{E}}\wedge[\vec{V},\smash{\vec{E}}]\wedge\vec{V} \\ 
    = 2\vec{V}\wedge[\smash{\vec{E}},\vec{V}]\wedge\vec{E}= 2(k-1)\cdot\vec{V}\wedge\vec{V}\wedge\vec{E}\equiv0.
\end{multline*}
This proves that the Jacobi identity $\tfrac{1}{2}\schouten{P,P}=0$ holds, so $P$~is Poisson.
\end{proof}

\begin{rem} \label{RemMKPoissonNonPolynomial}
The above construction of Poisson bi\/-\/vectors $P=\smash{\vec{V}}\wedge\smash{\vec{E}}$ naturally extends to homogeneous vector fields~$\smash{\vec{V}}$ (possibly not on the entire~$\BBR^d$) with not necessarily polynomial coefficients but still satisfying the condition $\smash{\vec{E}}(\smash{\vec{V}})=\lambda\cdot\smash{\vec{V}}$ with $\lambda\neq 0,1$.
\end{rem}

Now, let us produce a family of such Poisson bi\/-\/vectors $P=\smash{\vec{V}}\wedge\smash{\vec{E}}$ (with homogeneous polynomial coefficients) which do not admit any global non\/-\/constant polynomial Casimirs~--- and this is in contrast with 
the Nambu class~\eqref{EqBiVectNambu} 
for polynomial parameters~$\ba=(a_1$,\ $\ldots$,\ $a_{d-2})$.

Indeed, suppose that there is a polynomial Casimir $a$ for $P=\smash{\vec{V}}\wedge\smash{\vec{E}}$ as above.%
\footnote{\label{FootCasimirHomogeneous}%
From the homogeneity of objects it is readily seen that if $a$~is polynomial, then it is homogeneous of some degree~$D$: 
$\smash{\vec{E}}(a)=D\cdot a$).}
By the definition of Casimir, we have that
\[
\schouten{P,a}=\schouten{\vec{V}\wedge\vec{E},a}=\vec{V}\cdot\vec{E}(a)-\smash{\vec{E}}\cdot\vec{V}(a)=0,
\]
whence we obtain the system of PDE: for each $i$ running from $1$ to $d$, the Casimir $a$ satisfies the equation
\[
V^i\cdot\sum\nolimits_j x^j\cdot\partial a/\partial x^j= x^i\cdot\sum\nolimits_j V^j\cdot\partial a/\partial x^j.
\]
An infinite family of counterexamples is now produced by taking the vector fields~$\smash{\vec{V}}$ with coefficients $V^i\mathrel{{:}{=}}(x^i)^k$ for $k\geqslant2$. Indeed, we obtain that
\[
(x^i)^{k-1}\cdot\sum\nolimits_\ell x^\ell\cdot\partial a/\partial x^\ell= \sum\nolimits_j (x^j)^k\cdot\partial a/\partial x^j,
\]
and the Casimir~$a$ is by assumption polynomial in all $x^{j'}$ for $j'\not=i$ in particular. With respect to every $x^{j'}$ at $j'\not=i$ for a fixed $i$, $1\leqslant i\leqslant d$, the degree of the left\/-\/hand side, viewed as a polynomial in $x^{j'}$, is strictly not equal to that degree of the right\/-\/hand side (as $k>1$) unless $\partial a/\partial x^{j'}\equiv0$ for all $j'\not=i$. Cycling over all the equations indexed by $i$ in the system, we conclude that every polynomial Casimir~$a$ for the Poisson bi\/-\/vector $P=\smash{\vec{V}}\wedge\smash{\vec{E}}$ with $V^i=(x^i)^k$ is a constant over~$\BBR^d$.%
\qed

\begin{open}\label{OpenPrbNonNambu}
Is it true that Poisson bi\/-\/vectors $P=\vec{V}\wedge\vec{E}$ (see the above contruction) with homogeneous polynomial coefficients $P^{ij} \in \BBR[x^1,\ldots,x^d]$ but without global polynomial Casimir on~$\BBR^d$ are \emph{never} Nambu\/-\/type~--- or can there be a generalized Nambu\/-\/determinant Poisson bi\/-\/vector, with $P^{ij}$ from~\eqref{EqBiVectNambu} both polynomial and homogeneous, still having all non\/-\/polynomial Casimirs\,?
\end{open}

\begin{rem}\label{RemHomogPvectorCocycle}
We have just established the absence of global polynomial Casimirs, that is of polynomial Poisson $0$-\/cocycles for homogeneous Poisson bi\/-\/vectors $P=\smash{\vec{V}}\wedge\smash{\vec{E}}$. Let us recall from~\cite{sqs19} an affitmative statement about Poisson $1$-\/cocycles in this set\/-\/up. Specifically, every degree-$(k+1)$ homogeneous (provided $k\neq 1$ in~$\smash{\vec{V}}$), hence Poisson\/-\/exact Poisson bi\/-\/vector $P=\smash{\vec{V}}\wedge\smash{\vec{E}} = -(k-1)^{-1}\cdot [\![ P,\smash{\vec{E}} ]\!]$ is naturally accompanied with a set of Poisson $1$-\/cocycles $\smash{\vec{Z}}\in\ker [\![P,\cdot]\!]$. Their universal construction, again based on the use of Kontsevich's graph cocycles $\gamma=\sum_\ell c_\ell\cdot\gamma_\ell$ on $n$~vertices and $2n-2$ edges in each term~$\gamma_\ell$, is introduced in~\cite{sqs19}; in practice, the construction goes in parallel with Proposition~\ref{PropMKadotGraphs} (see above p.~\pageref{PropMKadotGraphs}): the formula is~$\smash{\vec{Z}} = \Or(\gamma)\bigl( P^{\otimes^{n-1}}\otimes\smash{\vec{E}} \bigr)$.
\end{rem}

\subsection*{Acknowledgements}
The authors thank the anonymous referee for helpful advice and constructive criticism;
the first and last authors thank M.\,Kontsevich for helpful advice and discussions;
R.B.\ thanks B.\,Pym for highlighting that Nambu\/-\/determinant Poisson structures are derived brackets.
The research of R.B.\ was supported by project~$5020$ at the Institute of Mathematics,
Johannes Gutenberg\/--\/Uni\-ver\-si\-t\"at Mainz and by CRC-326 grant GAUS ``Geometry and Arithmetic of Uniformized Structures''.
The travel of A.
K.\ was partially supported by project 135110 at the Bernoulli Institute, University of Groningen.
R.\,Buring and A.\,
Kiselev are grateful to the~IH\'{E}S for hospitality and financial support.


\newpage
\section{$\dot{a}$ and $\dot{\varrho}$ for the $\gamma_3$-flow over~$\BBR^3$} \label{AppARhoDotG3}\enlargethispage{\baselineskip}
{\tiny
\begin{verbatim}
adot = -12*rho^2*a_x*rho_y*a_xy*a_zz*a_xyz+12*rho^2*a_x*rho_y*a_xy*a_xz*a_yzz+12*rho^2*a_x*rho_y*a_xy
*a_xzz*a_yz-12*rho^2*a_x*rho_y*a_xz*a_yy*a_xzz+12*rho^2*a_x*rho_y*a_xz*a_yz*a_xyz-12*rho^2*a_x
*rho_y*a_yzz*a_yz*a_xx+6*rho^2*a_x*rho_y*a_xx*a_zzz*a_yy+6*rho^2*a_x*rho_y*a_xx*a_zz*a_yyz
+6*rho^2*a_x*rho_y*a_zz*a_yy*a_xxz-12*rho^2*a_x*rho_z*a_xy*a_yz*a_xyz-12*rho^2*a_x*rho_z*a_xy
*a_xz*a_yyz+12*rho^2*a_x*rho_z*a_xy*a_zz*a_xyy-12*rho^2*a_x*rho_z*a_xz*a_xyy*a_yz+12*rho^2
*a_x*rho_z*a_xz*a_yy*a_xyz+12*rho^2*a_x*rho_z*a_yz*a_xx*a_yyz-6*rho^2*a_x*rho_z*a_xx*a_zz
*a_yyy-6*rho^2*a_x*rho_z*a_xx*a_yzz*a_yy-6*rho^2*a_x*rho_z*a_zz*a_yy*a_xxy-12*rho^2*a_y*rho_x
*a_xy*a_xz*a_yzz-12*rho^2*a_y*rho_x*a_xy*a_xzz*a_yz+12*rho^2*a_y*rho_x*a_xy*a_zz*a_xyz
-12*rho^2*a_y*rho_x*a_xz*a_yz*a_xyz+12*rho^2*a_y*rho_x*a_xz*a_yy*a_xzz+12*rho^2*a_y*rho_x
*a_yzz*a_yz*a_xx-6*rho^2*a_y*rho_x*a_xx*a_zz*a_yyz-6*rho^2*a_y*rho_x*a_xx*a_zzz*a_yy-6*rho^2
*a_y*rho_x*a_zz*a_yy*a_xxz+12*rho^2*a_y*rho_z*a_xy*a_xz*a_xyz+12*rho^2*a_y*rho_z*a_xy*a_yz
*a_xxz-12*rho^2*a_y*rho_z*a_xy*a_zz*a_xxy+12*rho*a_x*a_y*rho_z*rho_x*a_yy*a_xzz-24*rho*a_x
*a_y*rho_z*rho_x*a_yz*a_xyz+12*rho*a_x*a_y*rho_z*rho_x*a_zz*a_xyy+24*rho*a_x*a_y*rho_z*rho_y
*a_xz*a_xyz-12*rho*a_x*a_y*rho_z*rho_y*a_xx*a_yzz-12*rho*a_x*a_y*rho_z*rho_y*a_zz*a_xxy
+24*rho*a_x*a_z*rho_y*rho_x*a_yz*a_xyz-12*rho*a_x*a_z*rho_y*rho_x*a_zz*a_xyy-12*rho*a_x*a_z
*rho_y*rho_x*a_yy*a_xzz-24*rho*a_x*a_z*rho_z*rho_y*a_xyz*a_xy+12*rho*a_x*a_z*rho_z*rho_y*a_xx
*a_yyz+12*rho*a_x*a_z*rho_z*rho_y*a_xxz*a_yy+12*rho*a_z*a_y*rho_x*rho_y*a_xx*a_yzz-24*rho*a_z
*a_y*rho_x*rho_y*a_xz*a_xyz+12*rho*a_z*a_y*rho_x*rho_y*a_zz*a_xxy+24*rho*a_z*a_y*rho_x*rho_z
*a_xyz*a_xy-12*rho*a_z*a_y*rho_x*rho_z*a_xx*a_yyz-12*rho*a_z*a_y*rho_x*rho_z*a_xxz*a_yy
-6*rho^2*a_x*rho_y*a_zzz*a_xy^2-6*rho^2*a_x*rho_y*a_xz^2*a_yyz-6*rho^2*a_x*rho_y*a_yz^2*a_xxz
+6*rho^2*a_x*rho_z*a_yzz*a_xy^2+6*rho^2*a_x*rho_z*a_xz^2*a_yyy+6*rho^2*a_x*rho_z*a_yz^2*a_xxy
+6*rho^2*a_y*rho_x*a_zzz*a_xy^2+6*rho^2*a_y*rho_x*a_xz^2*a_yyz+6*rho^2*a_y*rho_x*a_yz^2*a_xxz
-6*rho^2*a_y*rho_z*a_xzz*a_xy^2-6*rho^2*a_y*rho_z*a_xz^2*a_xyy-6*rho^2*a_y*rho_z*a_yz^2*a_xxx
-6*rho^2*a_z*rho_x*a_yzz*a_xy^2-6*rho^2*a_z*rho_x*a_xz^2*a_yyy-6*rho^2*a_z*rho_x*a_yz^2*a_xxy
+6*rho^2*a_z*rho_y*a_xzz*a_xy^2+6*rho^2*a_z*rho_y*a_xz^2*a_xyy+6*rho^2*a_z*rho_y*a_yz^2*a_xxx
-6*rho*a_x^2*rho_y^2*a_zz*a_xyz-6*rho*a_x^2*rho_y^2*a_xy*a_zzz+6*rho*a_x^2*rho_y^2*a_xz*a_yzz
+6*rho*a_x^2*rho_y^2*a_xzz*a_yz-6*rho*a_x^2*rho_z^2*a_xyy*a_yz-6*rho*a_x^2*rho_z^2*a_xy*a_yyz
+6*rho*a_x^2*rho_z^2*a_xz*a_yyy+6*rho*a_x^2*rho_z^2*a_yy*a_xyz-6*rho*a_y^2*rho_x^2*a_xzz*a_yz
+6*rho*a_y^2*rho_x^2*a_zz*a_xyz+6*rho*a_y^2*rho_x^2*a_xy*a_zzz-6*rho*a_y^2*rho_x^2*a_xz*a_yzz
-6*rho*a_y^2*rho_z^2*a_xxx*a_yz+6*rho*a_y^2*rho_z^2*a_xxz*a_xy-6*rho*a_y^2*rho_z^2*a_xx*a_xyz
+6*rho*a_y^2*rho_z^2*a_xxy*a_xz+6*rho*a_z^2*rho_x^2*a_xy*a_yyz-6*rho*a_z^2*rho_x^2*a_xz*a_yyy
-6*rho*a_z^2*rho_x^2*a_yy*a_xyz+6*rho*a_z^2*rho_x^2*a_xyy*a_yz+6*rho*a_z^2*rho_y^2*a_xxx*a_yz
+6*rho*a_z^2*rho_y^2*a_xx*a_xyz-6*rho*a_z^2*rho_y^2*a_xxy*a_xz-6*rho*a_z^2*rho_y^2*a_xxz*a_xy
+6*a_x^2*a_y*rho_x*a_zzz*rho_y^2+6*a_x^2*a_y*rho_x*a_yyz*rho_z^2+12*a_x^2*a_y*a_xyz*rho_y
*rho_z^2-6*a_x^2*a_y*a_xzz*rho_z*rho_y^2-6*a_x^2*a_z*rho_x*rho_y^2*a_yzz-6*a_x^2*a_z*rho_x
*rho_z^2*a_yyy+6*a_x^2*a_z*a_xyy*rho_z^2*rho_y-12*a_x^2*a_z*a_xyz*rho_z*rho_y^2+6*a_x*a_y^2
*rho_x^2*rho_z*a_yzz-6*a_x*a_y^2*rho_x^2*a_zzz*rho_y-12*a_x*a_y^2*rho_x*a_xyz*rho_z^2-6*a_x
*a_y^2*a_xxz*rho_z^2*rho_y+6*a_x*a_z^2*rho_x^2*rho_z*a_yyy-6*a_x*a_z^2*rho_x^2*a_yyz*rho_y
+12*a_x*a_z^2*rho_x*a_xyz*rho_y^2+6*a_x*a_z^2*a_xxy*rho_z*rho_y^2+12*a_z*a_y^2*rho_x^2*rho_z
*a_xyz+6*a_z*a_y^2*rho_x^2*rho_y*a_xzz-6*a_z*a_y^2*rho_x*a_xxy*rho_z^2+6*a_z*a_y^2*a_xxx*rho_y
*rho_z^2-6*a_z^2*a_y*rho_x^2*a_xyy*rho_z-12*a_z^2*a_y*rho_x^2*a_xyz*rho_y+6*a_z^2*a_y*rho_x
*rho_y^2*a_xxz-6*a_z^2*a_y*a_xxx*rho_z*rho_y^2+6*a_x^3*a_yzz*rho_y^2*rho_z-6*a_x^3*a_yyz
*rho_z^2*rho_y-6*a_x^2*a_y*a_xyy*rho_z^3+6*a_x^2*a_z*a_xzz*rho_y^3+6*a_x*a_y^2*a_xxy*rho_z^3
-6*a_x*a_z^2*a_xxz*rho_y^3+6*a_y^3*a_xxz*rho_x*rho_z^2-6*a_y^3*a_xzz*rho_x^2*rho_z-6*a_z*a_y^2
*a_yzz*rho_x^3+6*a_z^2*a_y*a_yyz*rho_x^3+6*a_z^3*a_xyy*rho_x^2*rho_y-6*a_z^3*a_xxy*rho_x
*rho_y^2-12*rho^2*a_y*rho_z*a_xz*a_xxz*a_yy+12*rho^2*a_y*rho_z*a_xz*a_yz*a_xxy-12*rho^2*a_y
*rho_z*a_yz*a_xx*a_xyz+6*rho^2*a_y*rho_z*a_xx*a_zz*a_xyy+6*rho^2*a_y*rho_z*a_xx*a_yy*a_xzz
+6*rho^2*a_y*rho_z*a_zz*a_yy*a_xxx+12*rho^2*a_z*rho_x*a_xy*a_xz*a_yyz+12*rho^2*a_z*rho_x*a_xy
*a_yz*a_xyz-12*rho^2*a_z*rho_x*a_xy*a_zz*a_xyy-12*rho^2*a_z*rho_x*a_xz*a_yy*a_xyz+12*rho^2*a_z
*rho_x*a_xz*a_xyy*a_yz-12*rho^2*a_z*rho_x*a_yz*a_xx*a_yyz+6*rho^2*a_z*rho_x*a_xx*a_zz*a_yyy
+6*rho^2*a_z*rho_x*a_xx*a_yzz*a_yy+6*rho^2*a_z*rho_x*a_zz*a_yy*a_xxy-12*rho^2*a_z*rho_y*a_xy
*a_yz*a_xxz+12*rho^2*a_z*rho_y*a_xy*a_zz*a_xxy-12*rho^2*a_z*rho_y*a_xy*a_xz*a_xyz-12*rho^2*a_z
*rho_y*a_xz*a_yz*a_xxy+12*rho^2*a_z*rho_y*a_xz*a_xxz*a_yy+12*rho^2*a_z*rho_y*a_yz*a_xx*a_xyz
-6*rho^2*a_z*rho_y*a_xx*a_zz*a_xyy-6*rho^2*a_z*rho_y*a_xx*a_yy*a_xzz-6*rho^2*a_z*rho_y*a_zz
*a_yy*a_xxx+6*rho*a_x^2*rho_x*rho_y*a_zz*a_yyz+6*rho*a_x^2*rho_x*rho_y*a_zzz*a_yy-12*rho*a_x^2
*rho_x*rho_y*a_yzz*a_yz-6*rho*a_x^2*rho_x*rho_z*a_yzz*a_yy-6*rho*a_x^2*rho_x*rho_z*a_zz*a_yyy
+12*rho*a_x^2*rho_x*rho_z*a_yyz*a_yz+6*rho*a_x^2*rho_z*rho_y*a_zz*a_xyy+12*rho*a_x^2*rho_z
*rho_y*a_yzz*a_xy-6*rho*a_x^2*rho_z*rho_y*a_yy*a_xzz-12*rho*a_x^2*rho_z*rho_y*a_xz*a_yyz
-6*rho*a_x*a_y*rho_x^2*a_zz*a_yyz-6*rho*a_x*a_y*rho_x^2*a_zzz*a_yy+12*rho*a_x*a_y*rho_x^2
*a_yzz*a_yz+6*rho*a_x*a_y*rho_y^2*a_zz*a_xxz-12*rho*a_x*a_y*rho_y^2*a_xzz*a_xz+6*rho*a_x*a_y
*rho_y^2*a_xx*a_zzz-6*rho*a_x*a_y*rho_z^2*a_xxz*a_yy+6*rho*a_x*a_y*rho_z^2*a_xx*a_yyz+12*rho
*a_x*a_y*rho_z^2*a_yz*a_xxy-12*rho*a_x*a_y*rho_z^2*a_xz*a_xyy-12*rho*a_x*a_z*rho_x^2*a_yyz
*a_yz+6*rho*a_x*a_z*rho_x^2*a_zz*a_yyy+6*rho*a_x*a_z*rho_x^2*a_yzz*a_yy-6*rho*a_x*a_z*rho_y^2
*a_xx*a_yzz+6*rho*a_x*a_z*rho_y^2*a_zz*a_xxy+12*rho*a_x*a_z*rho_y^2*a_xzz*a_xy-12*rho*a_x*a_z
*rho_y^2*a_yz*a_xxz+12*rho*a_x*a_z*rho_z^2*a_xyy*a_xy-6*rho*a_x*a_z*rho_z^2*a_xxy*a_yy-6*rho
*a_x*a_z*rho_z^2*a_xx*a_yyy-6*rho*a_y^2*rho_x*rho_y*a_zz*a_xxz+12*rho*a_y^2*rho_x*rho_y*a_xzz
*a_xz-6*rho*a_y^2*rho_x*rho_y*a_xx*a_zzz+12*rho*a_y^2*rho_x*rho_z*a_yz*a_xxz+6*rho*a_y^2*rho_x
*rho_z*a_xx*a_yzz-6*rho*a_y^2*rho_x*rho_z*a_zz*a_xxy-12*rho*a_y^2*rho_x*rho_z*a_xzz*a_xy-12
*rho*a_y^2*rho_z*rho_y*a_xz*a_xxz+6*rho*a_y^2*rho_z*rho_y*a_xxx*a_zz+6*rho*a_y^2*rho_z*rho_y
*a_xzz*a_xx+12*rho*a_z*a_y*rho_x^2*a_xz*a_yyz-6*rho*a_z*a_y*rho_x^2*a_zz*a_xyy-12*rho*a_z*a_y
*rho_x^2*a_yzz*a_xy+6*rho*a_z*a_y*rho_x^2*a_yy*a_xzz+12*rho*a_z*a_y*rho_y^2*a_xz*a_xxz-6*rho
*a_z*a_y*rho_y^2*a_xxx*a_zz-6*rho*a_z*a_y*rho_y^2*a_xzz*a_xx+6*rho*a_z*a_y*rho_z^2*a_yy*a_xxx
+6*rho*a_z*a_y*rho_z^2*a_xyy*a_xx-12*rho*a_z*a_y*rho_z^2*a_xy*a_xxy-12*rho*a_z^2*rho_x*rho_y
*a_yz*a_xxy+6*rho*a_z^2*rho_x*rho_y*a_xxz*a_yy-6*rho*a_z^2*rho_x*rho_y*a_xx*a_yyz+12*rho*a_z^2
*rho_x*rho_y*a_xz*a_xyy+6*rho*a_z^2*rho_x*rho_z*a_xxy*a_yy+6*rho*a_z^2*rho_x*rho_z*a_xx*a_yyy
-12*rho*a_z^2*rho_x*rho_z*a_xyy*a_xy+12*rho*a_z^2*rho_z*rho_y*a_xy*a_xxy-6*rho*a_z^2*rho_z
*rho_y*a_xyy*a_xx-6*rho*a_z^2*rho_z*rho_y*a_yy*a_xxx-12*a_x^2*a_y*rho_x*rho_y*rho_z*a_yzz
+12*a_x^2*a_z*rho_x*rho_y*rho_z*a_yyz+12*a_x*a_y^2*rho_x*rho_y*rho_z*a_xzz+12*a_x*a_z*a_y
*rho_x^2*a_yzz*rho_y-12*a_x*a_z*a_y*rho_x^2*rho_z*a_yyz-12*a_x*a_z*a_y*rho_x*rho_y^2*a_xzz
+12*a_x*a_z*a_y*rho_x*rho_z^2*a_xyy-12*a_x*a_z*a_y*a_xxy*rho_z^2*rho_y+12*a_x*a_z*a_y*a_xxz
*rho_z*rho_y^2-12*a_x*a_z^2*rho_x*rho_y*rho_z*a_xyy-12*a_z*a_y^2*rho_x*rho_y*rho_z*a_xxz+12
*a_z^2*a_y*rho_x*rho_y*rho_z*a_xxy-2*a_x^3*a_zzz*rho_y^3+2*a_x^3*a_yyy*rho_z^3+2*a_y^3*a_zzz
*rho_x^3-2*a_y^3*a_xxx*rho_z^3+2*a_z^3*a_xxx*rho_y^3-2*a_z^3*a_yyy*rho_x^3

rhodot =
-12*rho*rho_x*rho_y*a_x*a_z*rho_xyy*a_zz-12*rho*rho_x*rho_y*a_x*a_z*rho_xzz*a_yy+24*rho*rho_x
*rho_y*a_x*a_z*rho_xyz*a_yz+12*rho*rho_x*rho_y*a_x*a_xy*rho_yz*a_zz-12*rho*rho_x*rho_y*a_x
*a_xy*a_yz*rho_zz-12*rho*rho_x*rho_y*a_x*a_xz*a_yz*rho_yz+6*rho^2*rho_x*a_y*rho_zzz*a_xy^2
+6*rho^2*rho_x*a_y*rho_yyz*a_xz^2+6*rho^2*rho_x*a_y*rho_xxz*a_yz^2-6*rho^2*rho_x*a_z*rho_yzz
*a_xy^2-6*rho^2*rho_x*a_z*rho_yyy*a_xz^2-6*rho^2*rho_x*a_z*rho_xxy*a_yz^2-6*rho^2*rho_y*a_x
*rho_zzz*a_xy^2-6*rho^2*rho_y*a_x*rho_yyz*a_xz^2-6*rho^2*rho_y*a_x*rho_xxz*a_yz^2+6*rho^2
*rho_y*a_z*rho_xzz*a_xy^2+6*rho^2*rho_y*a_z*rho_xyy*a_xz^2+6*rho^2*rho_y*a_z*rho_xxx*a_yz^2
+6*rho^2*rho_z*a_x*rho_yzz*a_xy^2+6*rho^2*rho_z*a_x*rho_yyy*a_xz^2+6*rho^2*rho_z*a_x*rho_xxy
*a_yz^2-6*rho^2*rho_z*a_y*rho_xzz*a_xy^2-6*rho^2*rho_z*a_y*rho_xyy*a_xz^2-6*rho^2*rho_z*a_y
*rho_xxx*a_yz^2+6*rho*rho_x^2*a_y^2*rho_zzz*a_xy-6*rho*rho_x^2*a_y^2*rho_xzz*a_yz+6*rho
*rho_x^2*a_y^2*rho_xyz*a_zz-6*rho*rho_x^2*a_y^2*rho_yzz*a_xz-12*rho*rho_x^2*a_y*rho_xz*a_yz^2
-6*rho*rho_x^2*a_z^2*a_xz*rho_yyy-6*rho*rho_x^2*a_z^2*rho_xyz*a_yy+6*rho*rho_x^2*a_z^2*rho_xyy
*a_yz+6*rho*rho_x^2*a_z^2*a_xy*rho_yyz+12*rho*rho_x^2*a_z*rho_xy*a_yz^2+6*rho*rho_y^2*a_x^2
*rho_xzz*a_yz+6*rho*rho_y^2*a_x^2*rho_yzz*a_xz-6*rho*rho_y^2*a_x^2*rho_zzz*a_xy-6*rho*rho_y^2
*a_x^2*rho_xyz*a_zz+12*rho*rho_y^2*a_x*rho_yz*a_xz^2-6*rho*rho_y^2*a_z^2*rho_xxy*a_xz-6*rho
*rho_y^2*a_z^2*a_xy*rho_xxz+6*rho*rho_y^2*a_z^2*rho_xyz*a_xx+6*rho*rho_y^2*a_z^2*a_yz*rho_xxx
-12*rho*rho_y^2*a_z*rho_xy*a_xz^2+6*rho*rho_z^2*a_x^2*a_xz*rho_yyy+6*rho*rho_z^2*a_x^2*rho_xyz
*a_yy-6*rho*rho_z^2*a_x^2*rho_xyy*a_yz-6*rho*rho_z^2*a_x^2*a_xy*rho_yyz-12*rho*rho_z^2*a_x
*a_xy^2*rho_yz-6*rho*rho_z^2*a_y^2*a_yz*rho_xxx+6*rho*rho_z^2*a_y^2*a_xy*rho_xxz-6*rho*rho_z^2
*a_y^2*rho_xyz*a_xx+6*rho*rho_z^2*a_y^2*rho_xxy*a_xz+12*rho*rho_z^2*a_y*rho_xz*a_xy^2
-6*rho_x^3*a_z*a_y*rho_zz*a_yy+6*rho_x^3*a_z*a_y*rho_yy*a_zz-6*rho_x^2*rho_y*a_x*rho_zzz*a_y^2
-6*rho_x^2*rho_y*a_x*rho_yyz*a_z^2-6*rho_x^2*rho_y*a_y^2*a_xz*rho_zz+6*rho_x^2*rho_y*a_y^2*a_z
*rho_xzz+6*rho_x^2*rho_y*a_y^2*rho_xz*a_zz-12*rho_x^2*rho_y*a_z^2*a_y*rho_xyz-12*rho_x^2*rho_y
*a_z^2*rho_yz*a_xy+12*rho_x^2*rho_y*a_z^2*a_yz*rho_xy-6*rho_x^2*rho_y*a_z^2*rho_xz*a_yy
+6*rho_x^2*rho_y*a_z^2*a_xz*rho_yy+6*rho_x^2*rho_z*a_x*rho_yzz*a_y^2+6*rho_x^2*rho_z*a_x*a_z^2
*rho_yyy+6*rho_x^2*rho_z*a_y^2*rho_xy*a_zz+12*rho_x^2*rho_z*a_y^2*rho_yz*a_xz+12*rho_x^2*rho_z
*a_y^2*a_z*rho_xyz-6*rho_x^2*rho_z*a_y^2*a_xy*rho_zz-12*rho_x^2*rho_z*a_y^2*rho_xz*a_yz
-6*rho_x^2*rho_z*a_z^2*a_y*rho_xyy-6*rho_x^2*rho_z*a_z^2*rho_xy*a_yy+6*rho_x^2*rho_z*a_z^2
*rho_yy*a_xy-6*rho_x*rho_y^2*a_x^2*rho_yzz*a_z+6*rho_x*rho_y^2*a_x^2*rho_zzz*a_y+6*rho_x
*rho_y^2*a_x^2*a_yz*rho_zz-6*rho_x*rho_y^2*a_x^2*rho_yz*a_zz+12*rho_x*rho_y^2*a_x*a_z^2
*rho_xyz+6*rho_x*rho_y^2*a_y*a_z^2*rho_xxz-12*rho_x*rho_y^2*a_z^2*rho_xy*a_xz+6*rho_x*rho_y^2
*a_z^2*rho_yz*a_xx-6*rho_x*rho_y^2*a_z^2*a_yz*rho_xx-6*rho_x^3*a_y^2*rho_yz*a_zz-6*rho_x^3
*a_y^2*rho_yzz*a_z+6*rho_x^3*a_y^2*a_yz*rho_zz+6*rho_x^3*a_z^2*a_y*rho_yyz-6*rho_x^3*a_z^2
*a_yz*rho_yy+6*rho_x^3*a_z^2*rho_yz*a_yy+6*rho_x^2*rho_y*a_z^3*rho_xyy-6*rho_x^2*rho_z*rho_xzz
*a_y^3-6*rho_x*rho_y^2*a_z^3*rho_xxy+6*rho_x*rho_z^2*rho_xxz*a_y^3-6*rho_y^3*a_x^2*a_xz*rho_zz
+6*rho_y^3*a_x^2*a_z*rho_xzz+6*rho_y^3*a_x^2*rho_xz*a_zz-6*rho_y^3*a_z^2*a_x*rho_xxz-6*rho_y^3
*a_z^2*a_xx*rho_xz+6*rho_y^3*a_z^2*a_xz*rho_xx+6*rho_z*rho_y^2*rho_yzz*a_x^3-6*rho_z^2*rho_y
*rho_yyz*a_x^3+6*rho_z^3*a_x^2*rho_yy*a_xy-6*rho_z^3*a_x^2*a_y*rho_xyy-6*rho_z^3*a_x^2*rho_xy
*a_yy+6*rho_z^3*a_y^2*a_x*rho_xxy-6*rho_z^3*a_y^2*a_xy*rho_xx-12*rho^2*rho_x*a_y*a_xy*rho_yzz
*a_xz-12*rho^2*rho_x*a_y*a_xy*rho_xzz*a_yz+12*rho^2*rho_x*a_y*a_xy*rho_xyz*a_zz+12*rho^2*rho_x
*a_y*a_xz*rho_xzz*a_yy-12*rho^2*rho_x*a_y*a_xz*rho_xyz*a_yz+12*rho^2*rho_x*a_y*rho_yzz*a_yz
*a_xx-6*rho^2*rho_x*a_y*a_xx*rho_yyz*a_zz-6*rho^2*rho_x*a_y*a_xx*rho_zzz*a_yy-6*rho^2*rho_x
*a_y*rho_xxz*a_zz*a_yy+12*rho^2*rho_x*a_z*a_xy*rho_yyz*a_xz-12*rho^2*rho_x*a_z*a_xy*rho_xyy
*a_zz+12*rho^2*rho_x*a_z*a_xy*rho_xyz*a_yz-12*rho^2*rho_x*a_z*a_xz*rho_xyz*a_yy+12*rho^2*rho_x
*a_z*a_xz*rho_xyy*a_yz-12*rho^2*rho_x*a_z*rho_yyz*a_yz*a_xx+6*rho^2*rho_x*a_z*a_xx*rho_yyy
*a_zz+6*rho^2*rho_x*a_z*a_xx*rho_yzz*a_yy+6*rho^2*rho_x*a_z*rho_xxy*a_zz*a_yy+12*rho^2*rho_y
*a_x*a_xy*rho_yzz*a_xz+12*rho^2*rho_y*a_x*a_xy*rho_xzz*a_yz-12*rho^2*rho_y*a_x*a_xy*rho_xyz
*a_zz-12*rho^2*rho_y*a_x*a_xz*rho_xzz*a_yy+12*rho^2*rho_y*a_x*a_xz*rho_xyz*a_yz-12*rho^2*rho_y
*a_x*rho_yzz*a_yz*a_xx+6*rho^2*rho_y*a_x*a_xx*rho_yyz*a_zz+6*rho^2*rho_y*a_x*a_xx*rho_zzz*a_yy
+6*rho^2*rho_y*a_x*rho_xxz*a_zz*a_yy+12*rho^2*rho_y*a_z*a_xy*rho_xxy*a_zz-12*rho^2*rho_y*a_z
*a_xy*rho_xxz*a_yz-12*rho^2*rho_y*a_z*a_xy*a_xz*rho_xyz+12*rho^2*rho_y*a_z*a_xz*rho_xxz*a_yy
-12*rho^2*rho_y*a_z*a_xz*rho_xxy*a_yz+12*rho^2*rho_y*a_z*rho_xyz*a_yz*a_xx-6*rho^2*rho_y*a_z
*a_xx*rho_xzz*a_yy-6*rho^2*rho_y*a_z*a_xx*rho_xyy*a_zz-6*rho^2*rho_y*a_z*rho_xxx*a_zz*a_yy
-12*rho^2*rho_z*a_x*a_xy*rho_yyz*a_xz-12*rho^2*rho_z*a_x*a_xy*rho_xyz*a_yz+12*rho^2*rho_z*a_x
*a_xy*rho_xyy*a_zz+12*rho^2*rho_z*a_x*a_xz*rho_xyz*a_yy-12*rho^2*rho_z*a_x*a_xz*rho_xyy*a_yz
+12*rho^2*rho_z*a_x*rho_yyz*a_yz*a_xx-6*rho^2*rho_z*a_x*a_xx*rho_yzz*a_yy-6*rho^2*rho_z*a_x
*a_xx*rho_yyy*a_zz-6*rho^2*rho_z*a_x*rho_xxy*a_zz*a_yy-12*rho^2*rho_z*a_y*a_xy*rho_xxy*a_zz
+12*rho^2*rho_z*a_y*a_xy*rho_xxz*a_yz+12*rho^2*rho_z*a_y*a_xy*a_xz*rho_xyz-12*rho^2*rho_z*a_y
*a_xz*rho_xxz*a_yy+12*rho^2*rho_z*a_y*a_xz*rho_xxy*a_yz-12*rho^2*rho_z*a_y*rho_xyz*a_yz*a_xx
+6*rho^2*rho_z*a_y*a_xx*rho_xzz*a_yy+6*rho^2*rho_z*a_y*a_xx*rho_xyy*a_zz+6*rho^2*rho_z*a_y
*rho_xxx*a_zz*a_yy-6*rho*rho_x^2*a_x*a_y*rho_yyz*a_zz-6*rho*rho_x^2*a_x*a_y*rho_zzz*a_yy
+12*rho*rho_x^2*a_x*a_y*rho_yzz*a_yz-12*rho*rho_x^2*a_x*a_z*rho_yyz*a_yz+6*rho*rho_x^2*a_x*a_z
*rho_yyy*a_zz+6*rho*rho_x^2*a_x*a_z*rho_yzz*a_yy+12*rho*rho_x^2*a_y*a_z*rho_yyz*a_xz+6*rho
*rho_x^2*a_y*a_z*rho_xzz*a_yy-6*rho*rho_x^2*a_y*a_z*rho_xyy*a_zz-12*rho*rho_x^2*a_y*a_z
*rho_yzz*a_xy-12*rho*rho_x^2*a_y*a_xy*rho_yz*a_zz+12*rho*rho_x^2*a_y*a_xy*a_yz*rho_zz
+12*rho*rho_x^2*a_y*a_xz*a_yz*rho_yz-12*rho*rho_x^2*a_y*a_xz*rho_zz*a_yy+12*rho*rho_x^2*a_y
*rho_xz*a_yy*a_zz-12*rho*rho_x^2*a_z*a_xy*a_yz*rho_yz+12*rho*rho_x^2*a_z*a_xy*rho_yy*a_zz
+12*rho*rho_x^2*a_z*a_xz*rho_yz*a_yy-12*rho*rho_x^2*a_z*a_xz*a_yz*rho_yy-12*rho*rho_x^2*a_z
*rho_xy*a_yy*a_zz+6*rho*rho_x*rho_y*a_x^2*rho_yyz*a_zz+6*rho*rho_x*rho_y*a_x^2*rho_zzz*a_yy
-12*rho*rho_x*rho_y*a_x^2*rho_yzz*a_yz+12*rho*rho_x*rho_y*a_x*rho_xz*a_yz^2-6*rho*rho_x*rho_y
*a_y^2*rho_xxz*a_zz+12*rho*rho_x*rho_y*a_y^2*a_xz*rho_xzz-6*rho*rho_x*rho_y*a_y^2*a_xx*rho_zzz
-12*rho*rho_x*rho_y*a_y*rho_yz*a_xz^2-6*rho*rho_x*rho_y*a_z^2*rho_yyz*a_xx+6*rho*rho_x*rho_y
*a_z^2*rho_xxz*a_yy-12*rho*rho_x*rho_y*a_z^2*rho_xxy*a_yz+12*rho*rho_x*rho_y*a_z^2*rho_xyy
*a_xz+12*rho*rho_x*rho_y*a_z*rho_yy*a_xz^2-12*rho*rho_x*rho_y*a_z*rho_xx*a_yz^2-6*rho*rho_x
*rho_z*a_x^2*rho_yyy*a_zz-6*rho*rho_x*rho_z*a_x^2*rho_yzz*a_yy+12*rho*rho_x*rho_z*a_x^2
*rho_yyz*a_yz-12*rho*rho_x*rho_z*a_x*rho_xy*a_yz^2-12*rho*rho_x*rho_z*a_y^2*rho_xzz*a_xy
+6*rho*rho_x*rho_z*a_y^2*rho_yzz*a_xx-6*rho*rho_x*rho_z*a_y^2*rho_xxy*a_zz+12*rho*rho_x*rho_z
*a_y^2*rho_xxz*a_yz-12*rho*rho_x*rho_z*a_y*rho_zz*a_xy^2+12*rho*rho_x*rho_z*a_y*rho_xx*a_yz^2
-12*rho*rho_x*rho_z*a_z^2*rho_xyy*a_xy+6*rho*rho_x*rho_z*a_z^2*a_xx*rho_yyy+6*rho*rho_x*rho_z
*a_z^2*rho_xxy*a_yy+12*rho*rho_x*rho_z*a_z*a_xy^2*rho_yz-12*rho*rho_y^2*a_x*a_y*a_xz*rho_xzz
+6*rho*rho_y^2*a_x*a_y*a_xx*rho_zzz+6*rho*rho_y^2*a_x*a_y*rho_xxz*a_zz+12*rho*rho_y^2*a_x*a_z
*rho_xzz*a_xy+6*rho*rho_y^2*a_x*a_z*rho_xxy*a_zz-6*rho*rho_y^2*a_x*a_z*rho_yzz*a_xx-12*rho
*rho_y^2*a_x*a_z*rho_xxz*a_yz-12*rho*rho_y^2*a_x*a_xy*a_xz*rho_zz+12*rho*rho_y^2*a_x*a_xy
*rho_xz*a_zz-12*rho*rho_y^2*a_x*a_xz*rho_xz*a_yz+12*rho*rho_y^2*a_x*a_xx*a_yz*rho_zz-12*rho
*rho_y^2*a_x*a_xx*rho_yz*a_zz-6*rho*rho_y^2*a_z*a_y*rho_xzz*a_xx-6*rho*rho_y^2*a_z*a_y*rho_xxx
*a_zz+12*rho*rho_y^2*a_z*a_y*rho_xxz*a_xz+12*rho*rho_y^2*a_z*a_xy*rho_xz*a_xz-12*rho*rho_y^2
*a_z*a_xy*a_zz*rho_xx+12*rho*rho_y^2*a_z*rho_xx*a_yz*a_xz+12*rho*rho_y^2*a_z*a_xx*rho_xy*a_zz
-12*rho*rho_y^2*a_z*a_xx*rho_xz*a_yz-12*rho*rho_z*rho_y*a_x^2*rho_yyz*a_xz+12*rho*rho_z*rho_y
*a_x^2*rho_yzz*a_xy+6*rho*rho_z*rho_y*a_x^2*rho_xyy*a_zz-6*rho*rho_z*rho_y*a_x^2*rho_xzz*a_yy
+12*rho*rho_z*rho_y*a_x*rho_zz*a_xy^2-12*rho*rho_z*rho_y*a_x*rho_yy*a_xz^2+6*rho*rho_z*rho_y
*a_y^2*rho_xzz*a_xx+6*rho*rho_z*rho_y*a_y^2*rho_xxx*a_zz-12*rho*rho_z*rho_y*a_y^2*rho_xxz*a_xz
+12*rho*rho_z*rho_y*a_y*rho_xy*a_xz^2+12*rho*rho_z*rho_y*a_z^2*rho_xxy*a_xy-6*rho*rho_z*rho_y
*a_z^2*a_yy*rho_xxx-6*rho*rho_z*rho_y*a_z^2*rho_xyy*a_xx-12*rho*rho_z*rho_y*a_z*rho_xz*a_xy^2
+12*rho*rho_z^2*a_x*a_y*rho_xxy*a_yz-12*rho*rho_z^2*a_x*a_y*rho_xyy*a_xz-6*rho*rho_z^2*a_x*a_y
*rho_xxz*a_yy+6*rho*rho_z^2*a_x*a_y*rho_yyz*a_xx+12*rho*rho_z^2*a_x*a_z*rho_xyy*a_xy-6*rho
*rho_z^2*a_x*a_z*a_xx*rho_yyy-6*rho*rho_z^2*a_x*a_z*rho_xxy*a_yy+12*rho*rho_z^2*a_x*a_xy*a_xz
*rho_yy+12*rho*rho_z^2*a_x*a_xy*a_yz*rho_xy-12*rho*rho_z^2*a_x*rho_xy*a_xz*a_yy+12*rho*rho_z^2
*a_x*a_xx*rho_yz*a_yy-12*rho*rho_z^2*a_x*a_xx*a_yz*rho_yy-12*rho*rho_z^2*a_y*a_z*rho_xxy*a_xy
+6*rho*rho_z^2*a_y*a_z*a_yy*rho_xxx+6*rho*rho_z^2*a_y*a_z*rho_xyy*a_xx-12*rho*rho_z^2*a_y*a_xy
*rho_xy*a_xz-12*rho*rho_z^2*a_y*a_xy*a_yz*rho_xx+12*rho*rho_z^2*a_y*rho_xx*a_yy*a_xz-12*rho
*rho_z^2*a_y*a_xx*rho_xz*a_yy+12*rho*rho_z^2*a_y*a_xx*a_yz*rho_xy+12*rho_x^2*rho_y*a_x*a_y
*rho_yz*a_zz-12*rho_x^2*rho_y*a_x*a_y*a_yz*rho_zz+12*rho_x^2*rho_y*a_x*a_y*rho_yzz*a_z
+6*rho_x^2*rho_y*a_x*a_z*rho_zz*a_yy-6*rho_x^2*rho_y*a_x*a_z*rho_yy*a_zz-12*rho_x^2*rho_y*a_z
*a_y*rho_xy*a_zz+12*rho_x^2*rho_y*a_z*a_y*a_xy*rho_zz+6*rho_x^2*rho_z*a_x*a_y*rho_zz*a_yy
-6*rho_x^2*rho_z*a_x*a_y*rho_yy*a_zz-12*rho_x^2*rho_z*a_x*a_y*rho_yyz*a_z-12*rho_x^2*rho_z*a_x
*a_z*rho_yz*a_yy+12*rho_x^2*rho_z*a_x*a_z*a_yz*rho_yy+12*rho_x^2*rho_z*a_z*a_y*rho_xz*a_yy
-12*rho_x^2*rho_z*a_z*a_y*a_xz*rho_yy+12*rho_x*rho_y^2*a_x*a_y*a_xz*rho_zz-12*rho_x*rho_y^2
*a_x*a_y*a_z*rho_xzz-12*rho_x*rho_y^2*a_x*a_y*rho_xz*a_zz+12*rho_x*rho_y^2*a_x*a_z*rho_xy*a_zz
-12*rho_x*rho_y^2*a_x*a_z*a_xy*rho_zz-6*rho_x*rho_y^2*a_z*a_y*a_xx*rho_zz+6*rho_x*rho_y^2*a_z
*a_y*a_zz*rho_xx-12*rho_x*rho_z*rho_y*a_x^2*a_y*rho_yzz+12*rho_x*rho_z*rho_y*a_x^2*rho_yyz*a_z
-6*rho_x*rho_z*rho_y*a_x^2*rho_zz*a_yy+6*rho_x*rho_z*rho_y*a_x^2*rho_yy*a_zz+12*rho_x*rho_z
*rho_y*a_x*rho_xzz*a_y^2-12*rho_x*rho_z*rho_y*a_x*a_z^2*rho_xyy-6*rho_x*rho_z*rho_y*a_y^2*a_zz
*rho_xx-12*rho_x*rho_z*rho_y*a_y^2*a_z*rho_xxz+6*rho_x*rho_z*rho_y*a_y^2*a_xx*rho_zz+12*rho_x
*rho_z*rho_y*a_z^2*a_y*rho_xxy+6*rho_x*rho_z*rho_y*a_z^2*a_yy*rho_xx-6*rho_x*rho_z*rho_y*a_z^2
*rho_yy*a_xx+12*rho_x*rho_z^2*a_x*a_y*a_z*rho_xyy-12*rho_x*rho_z^2*a_x*a_y*rho_xz*a_yy
+12*rho_x*rho_z^2*a_x*a_y*a_xz*rho_yy+12*rho_x*rho_z^2*a_x*a_z*rho_xy*a_yy-12*rho_x*rho_z^2
*a_x*a_z*rho_yy*a_xy-6*rho_x*rho_z^2*a_y*a_z*a_yy*rho_xx+6*rho_x*rho_z^2*a_y*a_z*rho_yy*a_xx
+6*rho_z*rho_y^2*a_x*a_y*a_zz*rho_xx+12*rho_z*rho_y^2*a_x*a_y*a_z*rho_xxz-6*rho_z*rho_y^2*a_x
*a_y*a_xx*rho_zz+12*rho_z*rho_y^2*a_x*a_z*a_yz*rho_xx-12*rho_z*rho_y^2*a_x*a_z*rho_yz*a_xx
+12*rho_z*rho_y^2*a_z*a_y*a_xx*rho_xz-12*rho_z*rho_y^2*a_z*a_y*a_xz*rho_xx+12*rho_z^2*rho_y
*a_x*a_y*rho_yz*a_xx-12*rho_z^2*rho_y*a_x*a_y*a_z*rho_xxy-12*rho_z^2*rho_y*a_x*a_y*a_yz*rho_xx
-6*rho_z^2*rho_y*a_x*a_z*a_yy*rho_xx+6*rho_z^2*rho_y*a_x*a_z*rho_yy*a_xx+12*rho_z^2*rho_y*a_y
*a_z*a_xy*rho_xx-12*rho_z^2*rho_y*a_y*a_z*a_xx*rho_xy+2*rho_x^3*rho_zzz*a_y^3-2*rho_x^3*a_z^3
*rho_yyy-2*rho_y^3*rho_zzz*a_x^3+2*rho_y^3*a_z^3*rho_xxx+2*rho_z^3*rho_yyy*a_x^3-2*rho_z^3
*a_y^3*rho_xxx+12*rho*rho_x*rho_y*a_x*a_xz*rho_zz*a_yy-12*rho*rho_x*rho_y*a_x*rho_xz*a_yy*a_zz
-24*rho*rho_x*rho_y*a_y*a_z*a_xz*rho_xyz+12*rho*rho_x*rho_y*a_y*a_z*rho_xxy*a_zz+12*rho*rho_x
*rho_y*a_y*a_z*rho_yzz*a_xx-12*rho*rho_x*rho_y*a_y*a_xy*rho_xz*a_zz+12*rho*rho_x*rho_y*a_y
*a_xy*a_xz*rho_zz+12*rho*rho_x*rho_y*a_y*a_xz*rho_xz*a_yz-12*rho*rho_x*rho_y*a_y*a_xx*a_yz
*rho_zz+12*rho*rho_x*rho_y*a_y*a_xx*rho_yz*a_zz+12*rho*rho_x*rho_y*a_z*a_xy*rho_xz*a_yz
-12*rho*rho_x*rho_y*a_z*a_xy*rho_yz*a_xz-12*rho*rho_x*rho_y*a_z*rho_xz*a_xz*a_yy+12*rho*rho_x
*rho_y*a_z*a_yz*a_xx*rho_yz+12*rho*rho_x*rho_y*a_z*a_zz*a_yy*rho_xx-12*rho*rho_x*rho_y*a_z
*a_zz*rho_yy*a_xx-24*rho*rho_x*rho_z*a_x*a_y*rho_xyz*a_yz+12*rho*rho_x*rho_z*a_x*a_y*rho_xzz
*a_yy+12*rho*rho_x*rho_z*a_x*a_y*rho_xyy*a_zz-12*rho*rho_x*rho_z*a_x*a_xy*rho_yy*a_z
z+12*rho*rho_x*rho_z*a_x*a_xy*a_yz*rho_yz-12*rho*rho_x*rho_z*a_x*a_xz*rho_yz*a_yy+12*rho*rho_x
*rho_z*a_x*a_xz*a_yz*rho_yy+12*rho*rho_x*rho_z*a_x*rho_xy*a_yy*a_zz-12*rho*rho_x*rho_z*a_y*a_z
*rho_xxz*a_yy-12*rho*rho_x*rho_z*a_y*a_z*rho_yyz*a_xx+24*rho*rho_x*rho_z*a_y*a_z*rho_xyz*a_xy
+12*rho*rho_x*rho_z*a_y*a_xy*rho_yz*a_xz+12*rho*rho_x*rho_z*a_y*a_xy*rho_xy*a_zz-12*rho*rho_x
*rho_z*a_y*a_yz*rho_xy*a_xz-12*rho*rho_x*rho_z*a_y*a_yz*a_xx*rho_yz-12*rho*rho_x*rho_z*a_y
*a_zz*a_yy*rho_xx+12*rho*rho_x*rho_z*a_y*a_yy*a_xx*rho_zz-12*rho*rho_x*rho_z*a_z*a_xy*a_xz
*rho_yy-12*rho*rho_x*rho_z*a_z*a_xy*a_yz*rho_xy+12*rho*rho_x*rho_z*a_z*rho_xy*a_xz*a_yy
-12*rho*rho_x*rho_z*a_z*a_xx*rho_yz*a_yy+12*rho*rho_x*rho_z*a_z*a_xx*a_yz*rho_yy-12*rho*rho_z
*rho_y*a_x*a_y*rho_yzz*a_xx-12*rho*rho_z*rho_y*a_x*a_y*rho_xxy*a_zz+24*rho*rho_z*rho_y*a_x*a_y
*a_xz*rho_xyz+12*rho*rho_z*rho_y*a_x*a_z*rho_yyz*a_xx-24*rho*rho_z*rho_y*a_x*a_z*rho_xyz*a_xy
+12*rho*rho_z*rho_y*a_x*a_z*rho_xxz*a_yy-12*rho*rho_z*rho_y*a_x*a_xy*rho_xz*a_yz-12*rho*rho_z
*rho_y*a_x*a_xy*rho_xy*a_zz+12*rho*rho_z*rho_y*a_x*rho_xz*a_xz*a_yy+12*rho*rho_z*rho_y*a_x
*a_yz*rho_xy*a_xz-12*rho*rho_z*rho_y*a_x*a_yy*a_xx*rho_zz+12*rho*rho_z*rho_y*a_x*a_zz*rho_yy
*a_xx-12*rho*rho_z*rho_y*a_y*a_xy*rho_xz*a_xz+12*rho*rho_z*rho_y*a_y*a_xy*a_zz*rho_xx
-12*rho*rho_z*rho_y*a_y*rho_xx*a_yz*a_xz-12*rho*rho_z*rho_y*a_y*a_xx*rho_xy*a_zz+12*rho*rho_z
*rho_y*a_y*a_xx*rho_xz*a_yz+12*rho*rho_z*rho_y*a_z*a_xy*rho_xy*a_xz+12*rho*rho_z*rho_y*a_z
*a_xy*a_yz*rho_xx-12*rho*rho_z*rho_y*a_z*rho_xx*a_yy*a_xz+12*rho*rho_z*rho_y*a_z*a_xx*rho_xz
*a_yy-12*rho*rho_z*rho_y*a_z*a_xx*a_yz*rho_xy+24*rho_x*rho_z*rho_y*a_x*a_y*rho_xz*a_yz
-24*rho_x*rho_z*rho_y*a_x*a_y*rho_yz*a_xz+24*rho_x*rho_z*rho_y*a_x*a_z*rho_yz*a_xy-24*rho_x*
rho_z*rho_y*a_x*a_z*a_yz*rho_xy+24*rho_x*rho_z*rho_y*a_z*a_y*rho_xy*a_xz-24*rho_x*rho_z*rho_y
*a_z*a_y*a_xy*rho_xz+12*rho_x*rho_y^2*a_z^2*a_xy*rho_xz+6*rho_x*rho_z^2*a_x^2*rho_yz*a_yy
+6*rho_x*rho_z^2*a_x^2*rho_yyz*a_y-6*rho_x*rho_z^2*a_x^2*a_yz*rho_yy-6*rho_x*rho_z^2*a_x^2
*a_z*rho_yyy-12*rho_x*rho_z^2*a_x*rho_xyz*a_y^2+6*rho_x*rho_z^2*a_y^2*a_yz*rho_xx+12*rho_x
*rho_z^2*a_y^2*a_xy*rho_xz-6*rho_x*rho_z^2*a_y^2*a_z*rho_xxy-12*rho_x*rho_z^2*a_y^2*rho_xy
*a_xz-6*rho_x*rho_z^2*a_y^2*rho_yz*a_xx+6*rho_y^3*a_z*a_x*a_xx*rho_zz-6*rho_y^3*a_z*a_x*a_zz
*rho_xx-12*rho_z*rho_y^2*a_x^2*rho_xz*a_yz-6*rho_z*rho_y^2*a_x^2*a_y*rho_xzz-6*rho_z*rho_y^2
*a_x^2*rho_xy*a_zz+6*rho_z*rho_y^2*a_x^2*a_xy*rho_zz+12*rho_z*rho_y^2*a_x^2*rho_yz*a_xz
-12*rho_z*rho_y^2*a_x^2*a_z*rho_xyz+6*rho_z*rho_y^2*a_x*a_z^2*rho_xxy-6*rho_z*rho_y^2*a_y
*a_z^2*rho_xxx-6*rho_z*rho_y^2*a_z^2*a_xy*rho_xx+6*rho_z*rho_y^2*a_z^2*a_xx*rho_xy-12*rho_z^2
*rho_y*a_x^2*rho_yz*a_xy-6*rho_z^2*rho_y*a_x^2*a_xz*rho_yy+6*rho_z^2*rho_y*a_x^2*rho_xz*a_yy
+12*rho_z^2*rho_y*a_x^2*a_yz*rho_xy+12*rho_z^2*rho_y*a_x^2*a_y*rho_xyz+6*rho_z^2*rho_y*a_x^2
*a_z*rho_xyy-6*rho_z^2*rho_y*a_x*rho_xxz*a_y^2+6*rho_z^2*rho_y*a_y^2*a_z*rho_xxx-6*rho_z^2
*rho_y*a_y^2*a_xx*rho_xz+6*rho_z^2*rho_y*a_y^2*a_xz*rho_xx+6*rho_z^3*a_y*a_x*a_yy*rho_xx
-6*rho_z^3*a_y*a_x*rho_yy*a_xx+6*rho_z^3*a_y^2*a_xx*rho_xy
\end{verbatim}
\enlargethispage{\baselineskip}

}\label{lastpage}
\end{document}